\newtheorem{theo}{Theorem}[section]
\newtheorem{prop}[theo]{Proposition}
\newtheorem{claim}[theo]{Claim}
\newtheorem{lemm}[theo]{Lemma}
\newtheorem{coro}[theo]{Corollary}
\newtheorem{rema}[theo]{Remark}
\newtheorem{Defi}[theo]{Definition}
\title{Schiffer variations and the generic Torelli theorem  for  hypersurfaces}
\author{Claire Voisin\footnote{The author is supported by the ERC Synergy Grant HyperK (Grant agreement No. 854361).}}
\date{}
\newfont{\gothic}{eufb10}
\begin{document}
\maketitle
\setcounter{section}{-1}

\begin{abstract} We prove the generic Torelli theorem for  hypersurfaces in $\mathbb{P}^n$ of degree $d$  dividing  $n+1$, for $d$ sufficiently large. Our proof involves the higher order study of the  variation of Hodge structure along  particular $1$-parameter families of hypersurfaces that we call ``Schiffer variations".
We also analyze the  case of degree $4$.
Combined with Donagi's  generic Torelli theorem and results of Cox-Green, this shows that the generic Torelli theorem for  hypersurfaces
 holds with  finitely many exceptions.
 \end{abstract}
{\bf Keywords}.  Hodge structures, variation of Hodge structure, hypersurfaces, Torelli theorem.

{\bf  MSC  2020}. 14C30, 14C34, 14D07, 14J70
\section{Introduction}
We will consider in this paper  smooth hypersurfaces   $X_f\subset \mathbb{P}^n$   of degree $d$ defined by a
homogeneous  polynomial equation $f$. By the Lefschetz theorem on hyperplane sections, only the degree $n-1$   cohomology group $H^{n-1}(X_f,\mathbb{Z})$
carries a nontrivial Hodge structure, and its primitive part
$H^{n-1}(X_f,\mathbb{Z})_{\rm prim}:={\rm Ker}\,(H^{n-1}(X_f,\mathbb{Z})\rightarrow H^{n+1}(\mathbb{P}^n,\mathbb{Z}))$  carries a Hodge structure polarized by the cup-product $\langle\,\,,\,\,\rangle_X$ (the two groups agree when $n-1$ is odd, otherwise they differ by  $\mathbb{Z}h^{\frac{n-1}{2}}$, where $h=c_1(\mathcal{O}_X(1))$). The global  Torelli problem for hypersurfaces thus asks
whether the existence of an isomorphism of polarized Hodge structures

$$ H^{n-1}(X_f,\mathbb{Z})_{\rm prim}\cong H^{n-1}(X_{f'},\mathbb{Z})_{\rm prim}$$
(extending to an isomorphism of Hodge structures  $H^{n-1}(X_f,\mathbb{Z})\cong H^{n-1}(X_{f'},\mathbb{Z})$ preserving  the classes $h^{\frac{n-1}{2}}$ on both sides when $n-1$ is even)
implies that $X_f\cong X_{f'}$. There are very few cases where this statement is known: for plane curves, we can apply the Torelli theorem for curves. For quartic surfaces, the global Torelli theorem is proved by  Piateski-Shapiro-Shafarevich  \cite{Pysha}. For cubic threefolds, the global Torelli theorem is proved by Clemens-Griffiths \cite{Clegri} and Beauville \cite{beauville}, and for cubic fourfolds it was first  proved in \cite{voisincubic} (alternative  proofs are now available, see e.g. \cite{huybrechts}).

The generic Torelli theorem for hypersurfaces of degree $d$ and dimension $n-1$     is the following  statement  that we will study in this paper:

\vspace{0.5cm}

{\it Let $X_f$ be
a very general smooth hypersurface of degree $d$ in $\mathbb{P}^n$. Then any smooth hypersurface $X_{f'}$ of degree $d$ in $\mathbb{P}^n$ such that there exists an isomorphism of   Hodge structures
\begin{eqnarray}
\label{eqisopol} H^{n-1}(X_f,\mathbb{Q})_{\rm prim}\cong  H^{n-1}(X_{f'},\mathbb{Q})_{\rm prim}
\end{eqnarray}
is isomorphic to $X_f$.}

\vspace{0.5cm}

We will explain in Section \ref{secnounou} why  the ``very general'' assumption is natural in this statement. This is related to   the Cattani-Deligne-Kaplan theorem \cite{CDK} which implies that  the
set of pairs $(f,\,{f'})$ such that an isomorphism as in (\ref{eqisopol}) exists  is a countable union of closed algebraic subsets in $U_{d,n}\times U_{d,n}$, where $U_{d,n}$ is the moduli space of smooth hypersurfaces of degree $d$ in $\mathbb{P}^{n}$.

\begin{rema}{\rm The Torelli theorem is usually    stated for the isomorphisms of polarized Hodge structures. However, using  infinitesimal arguments (see \cite[6.3.1]{voisinbook})  we can see that, except in the  case of cubic surfaces, the Hodge structure on the primitive cohomology of $X_f$ has  a unique polarization (up to a scalar) for very general $X_f$. So the polarized and nonpolarized statements are equivalent.
}
\end{rema}

In the case of cubic surfaces,
the generic Torelli theorem is clearly wrong, since they have moduli, while their variation of Hodge structure is trivial. The case of plane quartics is also a counterexample to the generic Torelli theorem with rational coefficients, since in genus $3$, a  general curve is  not determined by the isogeny class of its Jacobian.  Donagi proved in \cite{donagi}
 the following beautiful result.
\begin{theo} \label{theodonagi} The generic Torelli theorem holds
for smooth hypersurfaces of degree $d$ in $\mathbb{P}^n$, with $n\geq 3$, $(d,n)\not=(3,3)$  and the following possible  exceptions:
\begin{enumerate} \item \label{i} $d$ divides $n+1$,
\item\label{ii} $d=4$, $n=4m+1$, with $m\geq1$,
\item\label{iii} $d=6$, $n=6m+2$, with $m\geq1$.
\end{enumerate}
\end{theo}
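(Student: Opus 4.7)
The plan is to reduce the generic Torelli theorem to an infinitesimal statement and then exploit Griffiths' description of the Hodge structure via the Jacobian ring. By \cite{CDK} and a standard density argument, it suffices to show that for a very general $[X_f]\in U_{d,n}$ the infinitesimal variation of Hodge structure (IVHS) on $H^{n-1}(X_f,\mathbb{C})_{prim}$ determines $f$ up to the natural $\mathrm{PGL}(n+1)$-action. Under Griffiths' residue calculus, the Hodge graded pieces are canonically identified with graded pieces $R_f^{pd-n-1}$ of the Jacobian ring $R_f = \mathbb{C}[x_0,\ldots,x_n]/J_f$, the IVHS is encoded by the multiplication maps
\[
\mu_{a,d}: R_f^a \otimes R_f^d \longrightarrow R_f^{a+d},
\]
and Macaulay duality makes $R_f$ a graded Artinian Gorenstein ring with socle in degree $N=(n+1)(d-2)$.

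The technical heart of the argument is Donagi's \emph{symmetrizer lemma}. Given only the bilinear map $\mu_{a,d}$ (without a priori knowledge of $R_f$), one forms its symmetrizer: the space of linear maps $\phi: R_f^d \to R_f^a$ satisfying $\mu_{a,d}(\phi(y),z) = \mu_{a,d}(\phi(z),y)$ for all $y,z\in R_f^d$. Under suitable numerical hypotheses on $a$, $d$, $n$ this symmetrizer is canonically isomorphic to $R_f^{a-d}$. Iterating the construction, one recovers the low-degree pieces of $R_f$ from higher-degree IVHS data, and combining with Macaulay duality reconstructs the full graded ring $R_f$ as a graded algebra from the IVHS alone. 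Once $R_f$ is known as a graded ring, a Mather-Yau-type statement for homogeneous polynomials recovers $f$ (and hence $X_f$) up to linear change of coordinates in $\mathbb{P}^n$, which is exactly generic Torelli.

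The main obstacle is the symmetrizer lemma, whose proof reduces to verifying a chain of surjectivity and ``base-point-free'' statements for the multiplication in $R_f$; these in turn rest on Macaulay's theorem and on Koszul-type computations in the Jacobian ring. These hypotheses fail precisely in the three excluded ranges: when $d$ divides $n+1$ the relevant Macaulay-type pairing degenerates at the critical spot, and the other two cases correspond to secondary numerical coincidences where the symmetrizer picks up spurious elements that cannot be identified with a graded piece of $R_f$. Handling these remaining cases lies outside the scope of Donagi's method and is the subject of the present paper.
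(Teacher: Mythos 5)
Your outline is essentially the paper's own treatment of this statement: the paper does not reprove Theorem \ref{theodonagi} but cites Donagi and recaps exactly the reduction you describe (Cattani--Deligne--Kaplan plus infinitesimal Torelli to pass to the IVHS, the Carlson--Griffiths identification of the IVHS with multiplication in the Jacobian ring, the symmetrizer lemma of Proposition \ref{propsymm} to reconstruct $R_f^*$ in degrees divisible by $\gcd(d,n+1)$, and Mather--Yau/Proposition \ref{theoyau} to recover $f$ when that gcd is $1$). Your account of why the three listed families are excluded — the symmetrizer only returns the subring $R_f^{*\,l}$ with $l=\gcd(d,n+1)$, which is a proper subring precisely in those ranges — also matches the paper's discussion, so the proposal is a faithful sketch of the same (Donagi's) argument, with the symmetrizer lemma and the Mather--Yau step used as black boxes just as the paper uses them.
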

\begin{rema}{\rm The Torelli theorem is usually stated for integral Hodge structures, and Donagi's original statement indeed concerned integral Hodge structures. In fact, his proof  works as well  for  rational Hodge structures,
since it relies on the study of the (complex!) variation of Hodge structure for hypersurfaces of given degree and dimension  and its local invariants. Another instance where a generic  Torelli theorem has  been proved for rational Hodge structures is the case of curves of genus $g\geq 4$ which is treated in \cite{bardelli-pirola}. In this case, Bardelli and Pirola prove  that a very general curve of genus at least $4$ is determined by the isogeny class of  its Jacobian. }
\end{rema}
\begin{rema} {\rm The Cattani-Deligne-Kaplan algebraicity theorem mentioned above appeared  much later than  \cite{donagi}, so that Theorem \ref{theodonagi} is in fact a  slightly strengthened    version of Donagi's theorem, taking into account \cite{CDK}.}
\end{rema}
 Cox and Green  solved in \cite{coxgreen} the case \ref{iii}, that is $d=6$, but the two infinite series \ref{i} and \ref{ii} essentially remained open.
The starting point of Donagi's proof is the description due to Griffiths and Carlson-Griffiths of the infinitesimal variation of Hodge
structure of a smooth hypersurface.
 Denote by $S^* =\mathbb{C}[X_0,\ldots,X_n]$ the graded polynomial ring of   $ \mathbb{P}^n$ and by $R_f^*=S^*/J_f^*$ the Jacobian ring of $f$, where
\begin{eqnarray}\label{eqJf}
J_f^* = S^{*-d+1}\langle\frac{ \partial f}{\partial X_i}\rangle\subset S^*\end{eqnarray}
is the Jacobian ideal of $f$, generated by the partial derivatives of $f$.
The infinitesimal variation of Hodge structure on the primitive cohomology of degree $n-1$ of $X_f$
is given, according to Griffiths \cite{gri}, see also \cite[6.1.3]{voisinbook}, by  linear maps
\begin{eqnarray}\label{eqivhs}
R^d_f\rightarrow {\rm Hom}\,(H^{p,q}(X_f)_{\rm prim},H^{p-1,q+1}(X_f)_{\rm prim})
\end{eqnarray}
for $p+q=n-1$.
Here, the space $R^d_f$ is naturally identified with the first order deformations of $X_f$ in
$\mathbb{P}^n$ modulo the infinitesimal  action of  ${\rm PGL}(n+1)$. It also identifies via the Kodaira-Spencer map
to the subspace $H^1(X_f,T_{X_f})_0\subset H^1(X_f,T_{X_f})$ of deformations of $X_f$ induced by  a deformation of $f$.
Griffiths constructs residue isomorphisms
\begin{eqnarray}\label{eqresidue} {\rm Res}_{X_f}:R^{(q+1)d-n-1}_f\stackrel{\cong}{\rightarrow} H^{n-q-1,q}(X_f)_{\rm prim}
\end{eqnarray}
and
the paper \cite{cagri} in turn describes (\ref{eqivhs}) using the isomorphisms  (\ref{eqresidue})  as follows:
\begin{theo} \label{theCagri} Via the isomorphisms (\ref{eqresidue}), the maps
(\ref{eqivhs}) identify up to a scalar coefficient with the map
\begin{eqnarray}\label{eqmulti} R_f^d\rightarrow {\rm Hom}(R^{(q+1)d-n-1}_f,R^{(q+2)d-n-1}_f).
\end{eqnarray} induced by multiplication in $R_f^*$. In other words, the following diagram is commutative up to a coefficient
\begin{eqnarray}\label{numerodiagcagri}
 \xymatrix{
&R^d_f\ar[r]\ar[d]^{\cong}& {\rm Hom}\,(R^{(q+1)d-n-1}_f,R^{(q+2)d-n-1}_f)\ar[d]^{\cong}\\
&H^1(X_f,T_{X_f})_0\ar[r]& {\rm Hom}\,(H^{n-1-q,q}(X_{f})_{\rm prim},H^{n-2-q,q+1}(X_{f})_{\rm prim})
.}
\end{eqnarray}

Furthermore, the Serre  pairing between $H^{n-1-q,q}(X_f)_{\rm prim}$ and $H^{q,n-1-q}(X_f)_{\rm prim} $ identifies with the Macaulay pairing
\begin{eqnarray}\label{eqmacpair} R^{(q+1)d-n-1}_f\otimes R^{(n-q)d-n-1}_f\rightarrow R_f^{(n+1)(d-2)}\cong \mathbb{C}.
\end{eqnarray}
given by the product in $R_f^*$.
\end{theo}

 Donagi's proof  starts with the observation that
Theorem \ref{theodonagi}  is implied by the following result:
\begin{theo} \label{theodonagireform}   Let $X$ be
a  smooth hypersurface of degree $d$ in $\mathbb{P}^n$, with $n\geq 3$. Assume $(d,n)\not=(3,3)$  and we are
 not in the cases \ref{i}, \ref{ii}, \ref{iii} listed in Theorem \ref{theodonagi}.
 Then  $X$ is determined by the  data (\ref{eqmulti}) for all $q$ and the Macaulay pairings (\ref{eqmacpair}), hence, using Theorem \ref{theCagri},  by   its polarized  infinitesimal variation of Hodge
structure.
\end{theo}
Concretely, Theorem \ref{theodonagireform} says that if
$X_f$  and $X_{f'}$ are two smooth hypersurfaces of degree $d$ and dimension $n-1$  such that there exist isomorphisms
$$R_f^d\cong R_{f'}^d,\,\,R^{(q+1)d-n-1}_f\cong R^{(q+1)d-n-1}_{f'}\,\,{\rm for\,\, any}\,\, q, $$
compatible with the Macaulay pairing (\ref{eqmacpair}) for $f$ and $f'$,
 and  such that the following diagram
commutes:
\begin{eqnarray}\label{numerodiagavecR}
\xymatrix{
&R^d_f\ar[r]\ar[d]& \bigoplus_q{\rm Hom}\,(R^{(q+1)d-n-1}_f,R^{(q+2)d-n-1}_f)\ar[d]\\
&R^d_{f'}\ar[r]& \bigoplus_q{\rm Hom}\,(R^{(q+1)d-n-1}_{f'},R^{(q+2)d-n-1}_{f'})
,}
\end{eqnarray}

then $X_f$ is isomorphic to $X_{f'}$.

Donagi's proof of Theorem \ref{theodonagireform} consists in recovering from the data
  (\ref{eqmulti}) its  {\it polynomial structure} (see Section \ref{secpolystruc}), and more precisely, reconstructing the whole Jacobian ring of $f$ from its partial data appearing in (\ref{eqmulti}). He then applies the Mather-Yau theorem (see Proposition  \ref{theoyau}) which  says that $f$ is determined by $J_f^{d-1}\subset S^{d-1}$.

  Donagi's  method  does not work in  the case where $(d,n)=(4,3)$, that is,  quartic $K3$ surfaces because   Theorem \ref{theodonagireform} is clearly wrong  in this case. In fact,  Theorem \ref{theodonagi} is also wrong for quartic surfaces, due to the fact that it is stated for  rational Hodge structures.  More generally, Donagi's  method to recover the polynomial structure, based on the use of the symmetrizer lemma (Proposition  \ref{propsymm}), gives nothing more, when $d$ divides $n+1$, than the subring
$R_f^{*d}\subset R^{*}_f$  defined as the sum of the graded pieces of $R_f^{*}$ of degree divisible by $d$. This is why Donagi's method fails to give the result in that case. The goal of this paper is to extend  Theorem \ref{theodonagi} to  most families  of hypersurfaces
not covered   by Donagi's theorem.
\begin{theo} \label{theodonagivoisin} (1)  The generic Torelli theorem holds
for smooth hypersurfaces of degree $d$ in $\mathbb{P}^n$ when  $d$ divides $n+1$  and $d$ is large enough.
In particular, it holds for Calabi-Yau hypersurfaces of degree $d$ large enough.

(2) The generic Torelli theorem holds
for smooth hypersurfaces of degree $4$  in $\mathbb{P}^{4m+1}$  for $m$ sufficiently large.
\end{theo}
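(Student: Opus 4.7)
The plan is to invoke Donagi's reduction (Theorem \ref{theodonagireform}) and reconstruct $X_f$ from its polynomial IVHS by adding a new ingredient: the intrinsic characterization of \emph{Schiffer variations} inside $R_f^d$.

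\textbf{Setting the stage.} In each of the three cases, Donagi's symmetrizer lemma recovers only a proper graded subring $\Lambda\subset R_f^*$ from the IVHS: in case (1) this is $R_f^{*d}$ (pieces of degree divisible by $d$); in cases (2) and (3) it is the analogous subring that includes the degrees $\equiv 2\pmod 4$ together with $R_f^4$, resp.\ the degrees $\equiv 3\pmod 6$ together with $R_f^6$. In all three cases $\Lambda$ omits the linear forms $R_f^1=S^1$ (recall $R_f^1=S^1$ as soon as $d\geq 3$, since $J_f^1=0$). The goal of the extra ingredient is to identify $R_f^1$ intrinsically from the IVHS.

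\textbf{Schiffer variations.} Call $v\in R_f^d$ a Schiffer variation if $v=\ell^d$ for some $\ell\in S^1$. These form the affine cone $C_d$ over the Veronese image $V_d(\mathbb{P}^n)\subset \mathbb{P}(R_f^d)$, a closed subvariety of $R_f^d$ for $d$ or $m$ large enough so that the surjection $S^d\to R_f^d$ is injective in a neighborhood of $C_d$. The central task is to cut out $C_d$ inside $R_f^d$ using only the multiplication maps that are part of the IVHS data. Multiplication by $\ell^d$ factors as the $d$-fold composition $(\ell\cdot)^d$; this factorization produces either a sub-generic rank or a special subspace structure (e.g.\ a nontrivial annihilator) on the image of $\ell^d\cdot:R_f^k\to R_f^{k+d}$, at least for suitably chosen $k$ visible in the IVHS. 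The approach is to extract from this factorization an intrinsic defining condition for $C_d$, and to prove that Schiffer variations are the \emph{only} elements of $R_f^d$ that satisfy it.

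This uniqueness statement is the principal obstacle of the proof: it requires sharp bounds on the multiplication structure of the Jacobian ring (Macaulay-type estimates, Lefschetz properties for generic linear forms on the Gorenstein Artinian algebra $R_f^*$) that become clean only when $d$ (case (1)) or $m$ (cases (2), (3)) is sufficiently large, which explains the asymptotic hypothesis of the theorem.

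\textbf{Reconstruction of $f$.} Once $C_d$ is identified, the Veronese $V_d$ gives back $\mathbb{P}^n$ and in particular the space of linear forms $R_f^1=S^1$. Multiplication in $R_f^*$ then supplies the action of $R_f^1$ on every piece of $\Lambda$, and iterating $R_f^k=R_f^1\cdot R_f^{k-1}$ yields all graded pieces together with the full ring structure, reconstructing the graded Jacobian ring $R_f^*$ from the IVHS data. By Donagi's reconstruction theorem for Jacobian rings of hypersurfaces (a graded-ring version of Mather--Yau), $R_f^*$ determines $f$ up to the action of $\mathrm{PGL}(n+1)$. Hence $X_f\cong X_{f'}$, and Theorem \ref{theodonagireform} yields the generic Torelli theorem in all three cases.
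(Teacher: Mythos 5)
Your overall architecture (identify the Schiffer cone $C_d=\{\ell^d\}\subset R^d_f$, recover the Veronese, hence $S^1$ and $J_f^d$, then conclude by Mather--Yau) matches the endgame of the paper's proof of case (1) (Proposition \ref{proschifferrec}). But there is a genuine gap at the step you yourself flag as "the central task": you claim that $C_d$ can be cut out inside $R_f^d$ \emph{using only the multiplication maps that are part of the IVHS data} at the point $[f]$, i.e.\ from first-order period data, and that this feeds into Donagi's reduction (Theorem \ref{theodonagireform}). The paper explicitly does \emph{not} do this and states that it has not been able to prove the uniqueness of the polynomial structure of $R^{d*}_f$ when $d\mid n+1$; its proof does not pass through Theorem \ref{theodonagireform} at all. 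The actual characterization of Schiffer variations (condition (**) in Section \ref{secschideg}) is a \emph{higher-order} condition along a one-parameter arc $f_t$: besides the existence of the filtration of ideals $I^{*d}_{x,k}=x^kR^{*d-k}_f$ with the prescribed dimensions and multiplicative compatibilities (your "factorization" heuristic, which is condition (*)), one crucially needs that the quotient partial ring $(R^d_{f_t}/I^d_{d-1,t},\,R^{2d}_{f_t}/I^{2d}_{d-1,t},\,\overline\mu)$ is \emph{constant in $t$} (Lemma \ref{leconstant}). Without this deformation-theoretic input your uniqueness statement is unsupported, and it is not merely a matter of "sharp Macaulay/Lefschetz bounds": the proof of Proposition \ref{leschiffer} proceeds by specializing to hypersurfaces $f=\sum f_ig_i$ singular along a complete intersection $Z$, and uses the Grothendieck--Lefschetz theorem on ${\rm Pic}\,Z$, the identification $I_Z/(I_Z^2+J_f)\cong H^1(Z,T_Z)$, and the Bogomolov--Yau stability of $T_Z$ — none of which is visible in your sketch.

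A second, smaller discrepancy: for case (2) (degrees $4$ and $6$) the paper does not use Schiffer variations at all; it remarks that they would "presumably" apply but would be much harder. Instead, the symmetrizer lemma there recovers the full subring $R^{d'*}_f$ with $d'=d/2$ (so $R^{d'}_f=S^{d'}$), and the polynomial structure is pinned down by identifying the variety of squares $\mathcal{S}q^{2d'}_f\subset R^{d}_f$ and then characterizing $\mathcal{S}q^2_f$ (resp.\ the loci $K\mathcal{S}q^2_f$) as the unique $(n+1)$-dimensional cones $T$ with $T\cdot T\subset\mathcal{S}q^{2d'}_f$ (Proposition \ref{procarre46}), proved by specialization to linear sections of Pfaffian hypersurfaces. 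So for case (2) your route is not the paper's, and you give no argument for the uniqueness statement it would require. In short: the skeleton of your reconstruction is right, but the identification of the Schiffer cone — the actual mathematical content of the theorem — is asserted rather than proved, and is asserted with the wrong (purely infinitesimal) input data.
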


These results combined with Donagi's theorem (Theorem \ref{theodonagi}) and Cox-Green's result in \cite{coxgreen}  imply the following result:
\begin{coro} The generic Torelli theorem holds for   hypersurfaces of degree $d$ in
$\mathbb{P}^n$ with  finitely many exceptions.
\end{coro}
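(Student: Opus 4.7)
The plan is to combine Donagi's Theorem \ref{theodonagi} with Theorem \ref{theodonagivoisin} as a short combinatorial bookkeeping exercise, so the proof is essentially an enumeration. First I would list the pairs $(d,n)$ for which Donagi's theorem does not yield the generic Torelli conclusion: the two isolated pairs $(3,3)$ and $(4,2)$ (where in fact the generic Torelli statement fails, respectively because of triviality of the variation of Hodge structure for cubic surfaces and because of the Jacobian counterexample for plane quartics), together with the three possibly exceptional infinite families (i), (ii), (iii) listed in Theorem \ref{theodonagi}.

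Second, I would apply Theorem \ref{theodonagivoisin}(1) to family (i): it eliminates from that family all pairs with $d$ beyond the absolute threshold furnished by that theorem, leaving only small-$d$ cases. Then I would apply Theorem \ref{theodonagivoisin}(2) to dispose of all pairs with $m$ sufficiently large in each of families (ii) and (iii); since the degree is fixed at $4$ and $6$ respectively in these two families, only the single parameter $m$ varies, and each family collapses to finitely many residual exceptions.

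Finally, I would take the union of the surviving pairs: the two counterexamples $(3,3)$ and $(4,2)$, together with the finite residues of each of the three families (i), (ii), (iii). This union is finite, which is the content of the corollary.

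The only nontrivial point to verify, and what I would single out as the main thing to check, is that the ``$d$ large enough'' hypothesis in Theorem \ref{theodonagivoisin}(1) genuinely reduces the infinite family $\{(d,n) : d \mid n+1\}$ to a finite set of exceptional pairs; this uniformity statement is the technical heart of the present paper, and once it is granted the corollary follows by pure set-theoretic bookkeeping.
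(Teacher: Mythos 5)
Your proposal is precisely the paper's own argument: the corollary is stated there with no separate proof, as the immediate set-theoretic combination of Donagi's Theorem~\ref{theodonagi} with Theorem~\ref{theodonagivoisin}, and your enumeration of the isolated pairs $(3,3)$, $(4,2)$ and of the three residual families (i), (ii), (iii) is exactly that combination. The uniformity issue you single out at the end is indeed the only point of substance, and you are right to flag it rather than wave it away: Theorem~\ref{theodonagivoisin}(1) as stated bounds only $d$, so for each $d$ below the threshold the set $\{n:\ d\mid n+1\}$ is still infinite, and the corollary's ``finitely many exceptions'' is therefore to be read at the level of the finitely many surviving values of $d$ in family (i) (and of $m$ in families (ii) and (iii), where the reduction genuinely is to finitely many pairs since the degree is fixed), unless one further checks that the proof of Theorem~\ref{theodonagivoisin}(1) also applies for fixed small $d$ and $n$ large --- a verification neither your write-up nor the paper carries out.
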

The proof of Theorem \ref{theodonagivoisin} (2) will be given in Section \ref{sec46}.  We will give there an effective estimate for $m$, which can probably be improved by refining the method. In that case, the method of proof follows closely Donagi's ideas, and in particular
passes through a proof of Theorem \ref{theodonagireform},  at least for $X$ generic.

The  case (1) of Theorem \ref{theodonagivoisin} had been also proved in \cite{voisin} in the case of quintic threefolds, the first case which is not covered
by Theorem \ref{theodonagi}, by extending Theorem \ref{theodonagireform} to that case. It is quite possible that Theorem \ref{theodonagireform} is true more generally when $d$ divides $n+1$ and $d$ is  sufficiently large, but the proof given in \cite{voisin} is  very technical and  ad hoc, hence is not encouraging.

Our   proof of Theorem \ref{theodonagivoisin} (1)  also rests on the algebraic analysis of the finite order
variation of Hodge structure, but it  does not pass through a proof of  Theorem \ref{theodonagireform}. Theorem \ref{theodonagireform} tells that for the given pairs $(d,n)$, a hypersurface of degree $d$ and dimension $n-1$  can be reconstructed  from its first order variation of Hodge structure. Instead, our proof will involve the higher order variation of Hodge structure.

  We introduce in this paper a main new ingredient, which is the notion of {\it Schiffer variation of a hypersurface}  (see Section \ref{secschiffer}). These Schiffer variations are of the form
 \begin{eqnarray}\label{eqfirstschiffer} f_t=f+t x^d
 \end{eqnarray}
 (up to a change of variable $t$)
 and we believe they are interesting for their own. The chosen  terminology
 comes from the notion of Schiffer variations for a smooth curve  $C$. They
 consist  in deforming the complex structure of  $C$  in a way that is
 supported on a point $p$  of $C$. First order Schiffer variations  are in that case
 the elements $u_p\in \mathbb{P}(H^1(C,T_C))$ given by
 $$[H^0(C,2K_C(-p)) ]\in\mathbb{P}(H^0(C,2K_C)^*)=\mathbb{P}(H^1(C,T_C)).$$
 First order   Schiffer variations (\ref{eqfirstschiffer}) of hypersurfaces $X_f$ are the tangent directions at $0$ of Schiffer variations of $f$. They   are parameterized by
the  $d$-th Veronese embedding of $\mathbb{P}(S^1)$ in $\mathbb{P}(S^d)$ projected to $\mathbb{P}(R^d_f)$ via the linear projection
$\mathbb{P}(S^d)\dashrightarrow\mathbb{P}(R^d_f)$.  Although the following result is easy to prove, it is crucial for our strategy.
\begin{prop}\label{propintroschi}  (Cf. Proposition  \ref{proschifferrec}.) Let $X_f,\,X_g$ be two smooth  hypersurfaces of degree $d\geq4$ and dimension $n-1\geq 3$, with $f$ generic. If there exists a linear  isomorphism $R_f^d\cong R_g^d$ mapping the set of first order Schiffer variations of $f$ to  the set of first order Schiffer variations of $g$, $X_f$ is isomorphic to $X_g$.
\end{prop}

Our strategy then  consists in  characterizing  Schiffer variations by the formal properties of the variation of Hodge structure along them. An obvious but key point (see Lemma \ref{leconstant}) is  the fact that the structure of the Jacobian ring (hence of the infinitesimal variation of Hodge structure)
does not change much along them. This follows from the fact that the Jacobian ideals of $f$ and $f_t=f+tx^d$ agree modulo the ideal generated by $x^{d-1}$. This is a higher order property since it concerns  the variation of the infinitesimal variation of Hodge structures.  It would be nice to have a better   understanding and a more  Hodge-theoretic, less formal, characterization  of Schiffer variations.
\begin{rema}{\rm Schiffer variations have very special first order properties mentioned above, as their tangent vector at any point lies in the Veronese variety. However, crucial to our argument is the fact that they also satisfy  higher order conditions,  saying that the hypersurface in $\mathbb{P}^{n-1}$ defined by $f_{\mid x=0} $ is constant, independent of $t$. }
\end{rema}
Our main result can be rephrased as follows.
\begin{theo} (Cf. Claim \ref{propvero}.) \label{theointronew} Let $d$ be large enough, and $n\geq d$. Let $f\in U_{d,n}$ be generic, $ U\subset U_{d,n}$,  $V\subset U_{d,n}$ be Euclidean open sets with $f\in U$, and let $i:U\cong V$ be a holomorphic diffeomorphism inducing an isomorphism of complex variations of Hodge structures
$$ (H^{n-1}_{\mathbb{C},{\rm prim}}, F^p\mathcal{H}^{n-1})\cong i^{-1}(H^{n-1}_{\mathbb{C},{\rm prim}}, F^p\mathcal{H}^{n-1})$$
on $U$. Then for a Schiffer variation $(f_t)_{t\in \Delta}$ of $f$ contained in $U$, where $\Delta$ is a disc, with tangent vector $\phi=\frac{df_t}{dt}_{|t=0}\in T_{U,f}$, $i_*\phi$ is a first order Schiffer variation of $f':=i(f)$.
\end{theo}
This theorem easily implies   Theorem \ref{theodonagivoisin} (1) using Proposition \ref{propintroschi}. With more work, it could be improved in two ways:

1)  Under the same assumptions as above, for  a general  Schiffer variation $(f_t)_{t\in \Delta}$ of $f$, $(i(f_t))_{t\in \Delta}$ is a Schiffer variation of $f'=i(f)$.

2) One should be able to replace the Schiffer variation $(f_t)_{t\in \Delta}$ of $f$ parameterized by a disc by a second order Schiffer variation of $f$.

\vspace{0.5cm}

The paper is organized as follows. In Section \ref{secpolystruc},  we first   explain   (see  Section \ref{secnounou}) how  Theorem \ref{theodonagireform} implies Theorem \ref{theodonagi} and we next discuss the notion of polynomial structure on the data of an infinitesimal variation of Hodge structure of a hypersurface. We discuss various  recipes toward proving uniqueness of the polynomial structure, including Donagi's method. For  example, we exhibit a very simple  recipe to show that the natural   polynomial structure for  most hypersurfaces of degree
$d$ dividing $n+1$
is rigid.  In Section \ref{sec46}, we prove  case (2)  of Theorem \ref{theodonagivoisin}, that is,  degree 4. This
proof follows Donagi's argument but provides a different recipe to prove the uniqueness of the polynomial structure of the infinitesimal variation of Hodge structure in these cases.

The main new ideas and results of the paper appear starting from   Section \ref{secschiffer} where we introduce Schiffer variations of hypersurfaces and discuss their formal properties. The proof of Case (1) of  Theorem \ref{theodonagivoisin} is given in Section \ref{secproofmain},
where we give a characterization of Schiffer variations based on the local analysis of the  variation of Hodge structure along  the corresponding family of  hypersurfaces and prove Theorem \ref{theointronew} (see  Proposition \ref{proschigen} and Claim \ref{propvero}).

\vspace{0.5cm}

{\bf Thanks.} {\it I thank Nick Shepherd-Barron for reminding me that  the Donagi method a priori works only   starting from $n=3$ and that  Theorem \ref{theodonagi} is actually wrong for $(d,n)=(4,2)$. I also thank the referees for their very careful reading and constructive criticism.
 This work was started at MSRI during the program ``Birational Geometry and Moduli Spaces'' in the Spring 2019.
I thank the organizers for inviting me to stay there  and the Clay Institute for its generous support.}

\section{Polynomial structure and the Torelli theorem \label{secpolystruc}}

\subsection{Donagi's strategy and reduction to Theorem \ref{theodonagireform}  \label{secnounou}}
For completeness, and because this argument will be also used in the last section, we explain in this section how Theorem \ref{theodonagireform} implies
Theorem \ref{theodonagi}.
Assume $f\in U_{d,n}$ is very general and $X_{f'}$ is a smooth  hypersurface of degree $d$ and dimension $n-1$ with   polarized Hodge  structure on $H^{n-1}(X_{f'},\mathbb{Q})_{\rm prim}$,  isomorphic to the Hodge structure on  $H^{n-1}(X_f,\mathbb{Q})_{\rm prim}$. We claim that this implies, except in the case where $(d,n)=(3,3)$, that  $X_{f'}$ is also very general and there
is an isomorphism of variations of Hodge structures on respective  neighborhoods $U$, $V$ of
$f$  and $f'$ in their  moduli space $U_{d,n}$.
Indeed, the Hodge locus $$\Gamma_\phi\subset U\times V\subset U_{d,n}\times U_{d,n}$$
  defined as the set of points $t,\,t'\in U\times V$ such  that  the given  isomorphism
 $$\phi:H^{n-1}(X_f,\mathbb{Q})_{\rm prim}\cong H^{n-1}(X_{f'},\mathbb{Q})_{\rm prim}$$ induces an isomorphism of Hodge structures
 $$H^{n-1}(X_t,\mathbb{Q})_{\rm prim}\cong H^{n-1}(X'_{t'},\mathbb{Q})_{\rm prim}$$  is by the Cattani-Deligne-Kaplan theorem
 \cite{CDK} the restriction to $U\times V$ of a closed algebraic subset (that we also denote $\Gamma_\phi$)  of $U_{d,n}\times U_{d,n}$. As $f$ is very general and in the image of ${\rm pr}_1: \Gamma_\phi\rightarrow U_{d,n}$, ${\rm pr}_1$ has to be dominant.

 An important point is the fact that, as an easy consequence of Macaulay theorem \cite[Theorem 6.19]{voisinbook},  the smooth  hypersurfaces of degree $d$ in $\mathbb{P}^n$, with $(d,n)\not=(3,3)$   satisfy the infinitesimal Torelli theorem. This means that the period map is an immersion
at the points $f$ of the open set $U_{d,n}^0\subset U_{d,n}$  parameterizing automorphisms free hypersurfaces,  once $(d,n)\not=(3,3)$. As $f$ is very general, we can assume that $f$ belongs to $U_{d,n}^0$.  It follows that  the projection
$${\rm pr}_2: \Gamma_\phi \rightarrow V$$ must be locally finite since, by definition, the fiber of ${\rm pr}_2$ over any $t'\in V$ parameterizes points $t$ with isomorphic Hodge structures on $H^{n-1}(X_t,\mathbb{Q})_{\rm prim}$.  In particular, $f'$ is also very general so we can assume that $f'$ is also automorphism free.

The two projections  ${\rm pr}_1$, ${\rm pr_2}$ are thus  immersions and  dominant morphisms, hence  they must be  \'etale.   It thus follows that
$\Gamma_\phi$  induces a local holomorphic diffeomorphism $i$ between $U$ and $V$ which, by definition of $\Gamma_\phi$, has the property that the isomorphism
$$\phi:H^{n-1}_{\mathbb{Q},{\rm prim}}\rightarrow i^{-1}H^{n-1}_{\mathbb{Q},{\rm prim}}$$
of trivial local systems on $U$
induces an isomorphism of variations of Hodge structures. Here, if $\pi:\mathcal{X}_{d,n}\rightarrow U_{d,n}^0$ is the universal hypersurface,
 $H^{n-1}_\mathbb{C}$ is the local system $R^{n-1}\pi_*\mathbb{C}_{\rm prim}$ on $U_{d,n}^0$.
 Taking the differential of this isomorphism provides a commutative diagram
where the vertical maps are isomorphisms
\begin{eqnarray}\label{numerodiagavechpq}
 \xymatrix{
&R^d_f\ar[r]\ar[d]&\bigoplus_{p+q=n-1} {\rm Hom}\,(H^{p,q}(X_f)_{\rm prim},H^{p-1,q+1}(X_f)_{\rm prim})\ar[d]\\
&R^d_{f'}\ar[r]& \bigoplus_{p+q=n-1}{\rm Hom}\,(H^{p,q}(X_{f'})_{\rm prim},H^{p-1,q+1}(X_{f'})_{\rm prim})
,}
\end{eqnarray}
where the vertical map on the left is the differential  $i_*$ at $f\in U$ and the vertical map on the right is induced by the isomorphism of Hodge structures
$\phi: H^{n-1}(X_f,\mathbb{Q})_{\rm prim}\rightarrow H^{n-1}(X_{f'},\mathbb{Q})_{\rm prim}$.
 By Theorem \ref{theCagri}, we then get a commutative diagram (\ref{numerodiagavecR}) to which Theorem \ref{theodonagireform}  applies.
 \subsection{Polynomial structure and the symmetrizer lemma}
The method  used by Donagi to prove Theorem \ref{theodonagireform} consists in applying the ``symmetrizer lemma''
(Proposition  \ref{propsymm} below),
in order to recover from the data  (\ref{eqmulti}) the whole
Jacobian ring in degrees divisible by  $l$, where $l$ is the g.c.d. of $n+1$ and $d$. This result  proved first
 in \cite{donagi} for the Jacobian ring of generic hypersurfaces, and  reproved in \cite{greendonagi} for any smooth hypersurface (and more generally quotients $R_{f_{\bullet}}$ of the polynomial ring $S=\mathbb{C}[X_0,\ldots,X_n]$ by a regular sequence $f_{\bullet}=(f_0,\ldots,f_n)$ with ${\rm deg}\,f_i=d-1$),
is  the following
statement.
Consider the multiplication map
\begin{eqnarray}\label{eqmultkkprime} R^k_{f_{\bullet}}\otimes R^{k'}_{f_{\bullet}}\rightarrow R^{k+k'}_{f_{\bullet}},\\
\nonumber
a\otimes b\mapsto ab.
\end{eqnarray}

\begin{prop} \label{propsymm} Let $N=(n+1)(d-2)$. Then, if   ${\rm Max}\,(k,N-k')\geq d-1$ and $N-k-k'>0$,
 the multiplication map
$$R^{k'-k}_{f_{\bullet}}\otimes R^{k}_{f_{\bullet}}\rightarrow R^{k'}_{f_{\bullet}}$$

is determined by
the multiplication map (\ref{eqmultkkprime})
as follows
\begin{eqnarray}\label{eqsymmetrizer}
R^{k'-k}_{f_{\bullet}}=\{h\in{\rm Hom}\,(R^k_{f_{\bullet}},R^{k'}_{f_{\bullet}}),\,bh(a)=ah(b)\,\,{\rm in}\,\,R^{k+k'}_{f_{\bullet}},\,\forall a,\,b\in R^k_{f_{\bullet}}\}.
\end{eqnarray}
\end{prop}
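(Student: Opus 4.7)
The inclusion $R^{k'-k}_{f_{\bullet}} \hookrightarrow \{h : bh(a) = ah(b)\}$ is immediate: for $c \in R^{k'-k}_{f_{\bullet}}$, the multiplication map $h_c(a) := ca$ satisfies $b h_c(a) = bca = acb = a h_c(b)$ by commutativity of $R^{*}_{f_{\bullet}}$. Injectivity of the resulting map $R^{k'-k}_{f_{\bullet}} \to \mathrm{Hom}(R^k_{f_{\bullet}}, R^{k'}_{f_{\bullet}})$ follows from Macaulay duality: if $c \cdot R^k_{f_{\bullet}} = 0$ in $R^{k'}_{f_{\bullet}}$, pairing with $R^{N-k'}_{f_{\bullet}}$ gives $c \cdot R^{k + N - k'}_{f_{\bullet}} = 0$ in $R^N_{f_{\bullet}}$; since $k + N - k' = N - (k'-k)$, the perfect pairing $R^{k'-k}_{f_{\bullet}} \otimes R^{N-(k'-k)}_{f_{\bullet}} \to R^N_{f_{\bullet}} \cong \mathbb{C}$ forces $c = 0$.

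For the reverse inclusion, I would first use Macaulay duality to reduce to the case $k \geq d-1$: the transpose $h^{\vee} : R^{N-k'}_{f_{\bullet}} \to R^{N-k}_{f_{\bullet}}$ of a symmetrizer is again a symmetrizer with the same shift $(N-k)-(N-k') = k'-k$, and the hypothesis $\max(k, N-k') \geq d-1$ is invariant under the swap $k \leftrightarrow N-k'$. Assuming then $k \geq d-1$, the plan is to realize any given symmetrizer $h$ as multiplication by a specific element $c \in R^{k'-k}_{f_{\bullet}}$. The symmetrizer identity makes $\phi(a,b) := bh(a) = ah(b)$ a well-defined symmetric bilinear map $R^k_{f_{\bullet}} \otimes R^k_{f_{\bullet}} \to R^{k+k'}_{f_{\bullet}}$, and $h = h_c$ is equivalent to $\phi(a,b) = c \cdot (ab)$ for all $a,b$. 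So the central task splits into two: (i) factor $\phi$ through the multiplication $R^k_{f_{\bullet}} \otimes R^k_{f_{\bullet}} \to R^{2k}_{f_{\bullet}}$, and (ii) identify the resulting linear map $R^k_{f_{\bullet}} \cdot R^k_{f_{\bullet}} \to R^{k+k'}_{f_{\bullet}}$ with multiplication by some $c \in R^{k'-k}_{f_{\bullet}}$ via Macaulay duality, the latter step using $N - k - k' > 0$ to ensure nondegeneracy of the pairing in degree $k+k'$.

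The main obstacle is step (i): showing that $\sum_i a_i b_i = 0$ in $R^{2k}_{f_{\bullet}}$ forces $\sum_i b_i h(a_i) = 0$ in $R^{k+k'}_{f_{\bullet}}$. The idea is to lift such a relation to $S^{2k}$, where it takes the form $\sum_i a_i b_i = \sum_j f_j g_j$ for some $g_j \in S^{2k-d+1}$, and then, since $k \geq d-1$, to use Koszul-type syzygies of the regular sequence $(f_0,\ldots,f_n)$ to absorb each $f_j$ factor into one of the two tensor slots (both lying in $S^k$, because $k-d+1 \geq 0$). The symmetrizer identity $bh(a) = ah(b)$ then transports each $f_j$ factor across $h$, where it vanishes since $f_j \equiv 0$ in $R^{*}_{f_{\bullet}}$. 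The bound $k \geq d-1$ is indispensable here: without it, the syzygies in $R^{2k}_{f_{\bullet}}$ are not generated by the controlled Koszul relations that make the absorption argument go through, and indeed the symmetrizer lemma itself fails in the omitted ranges.
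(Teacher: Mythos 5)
The paper does not actually prove this proposition: it is quoted from \cite{donagi} and \cite{greendonagi}, so there is no internal argument to compare with, and your attempt has to be judged on its own. Your easy inclusion and the injectivity argument via the perfect pairing are fine. The hard inclusion, however, has two genuine gaps. The first is the reduction to $k\geq d-1$ by transposition. The transpose $h^{\vee}:R^{N-k'}_{f_{\bullet}}\to R^{N-k}_{f_{\bullet}}$ of a symmetrizer would have to satisfy its own symmetrizer identity in degree $(N-k')+(N-k)=2N-k-k'$, which is $>N$ precisely because $N-k-k'>0$; the ring vanishes there, so the condition on $h^{\vee}$ is vacuous and carries no information. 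Equivalently, while $\max(k,N-k')\geq d-1$ is invariant under $k\mapsto N-k'$, $k'\mapsto N-k$, the second hypothesis $N-k-k'>0$ becomes $k+k'>N$ and cannot hold for the transposed data. What $h^{\vee}$ really inherits is the mixed identity $\langle h^{\vee}(ua),b\rangle=\langle h^{\vee}(ub),a\rangle$ for $u\in R^{N-k-k'}_{f_{\bullet}}$, $a,b\in R^k_{f_{\bullet}}$, which is not the symmetrizer condition. Since the regime $k<d-1\leq N-k'$ is genuinely used in the paper (e.g.\ $k=d'<d-1$ in Section \ref{sec46}), this case cannot be dismissed.

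The second gap is in step (i) itself. For $k\geq d-1$ the kernel of $R^k_{f_{\bullet}}\otimes R^k_{f_{\bullet}}\to R^{2k}_{f_{\bullet}}$ is indeed spanned by the part absorbed into $J^k$ (using $J^{2k}=J^k\cdot S^k$) together with the exchange relations $(xu)\otimes(yv)-(yu)\otimes(xv)$ with $x,y\in S^1$, $u,v\in S^{k-1}$; but the symmetrizer identity does not kill the latter. It only yields $(yv)h(xu)-(xv)h(yu)=(xu)h(yv)-(yu)h(xv)$, i.e.\ antisymmetry of this expression in $(u,v)$ and in $(x,y)$, not its vanishing. Indeed it cannot vanish for formal reasons: if the factorization of $(a,b)\mapsto bh(a)$ through $R^{2k}_{f_{\bullet}}$ followed from the symmetrizer identity alone, your steps (ii) would then prove the proposition also when $N-k-k'=0$, where it is false --- there the symmetrizers are exactly the symmetric bilinear forms on $R^k_{f_{\bullet}}$, a space of dimension $\binom{r_k+1}{2}$ which strictly exceeds $\dim R^{k'-k}_{f_{\bullet}}=\dim R^{2k}_{f_{\bullet}}$ as soon as ${\rm Sym}^2R^k_{f_{\bullet}}\to R^{2k}_{f_{\bullet}}$ has a kernel. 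So the hypothesis $N-k-k'>0$ must enter the factorization step, and your sketch never uses it there. The proofs in \cite{donagi} and \cite{greendonagi} avoid this by descending one degree at a time --- producing $\tilde h:R^{k-1}_{f_{\bullet}}\to R^{k'-1}_{f_{\bullet}}$ with $h(xa)=x\tilde h(a)$ and iterating down to $R^0$ --- which is exactly the scheme the author reproduces in the proof of Lemma \ref{lepourmorRmod}; I would rebuild your argument along those lines.
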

Coming back to the case of a Jacobian ring $R_f$, when $d$ divides $n+1$, the infinitesimal variation of Hodge structure (\ref{eqivhs}) of $X_f$, translated in the form (\ref{eqmulti}), involves only
pieces $R^k_f$ of the Jacobian ring of degree $k$ divisible by $d$. Hence the symmetrizer lemma at best
allows  us, starting from the  IVHS of the hypersurface, to reconstruct the Jacobian ring in degrees divisible by $d$.
 At the opposite, when $d$ and $n+1$ are coprime, repeated applications of the symmetrizer lemma
 allow us to reconstruct the whole Jacobian ring. In degree $<d-1$, the Jacobian ring coincides with the polynomial
 ring, hence we directly recover in that case
 the multiplication map
 $${\rm Sym}^d (S^1)\rightarrow R^d_f$$
 and its kernel $J_f^d$. The  proof of Donagi is then finished by applying Mather-Yau's theorem \cite{mayau} (see also Proposition  \ref{theoyau}).

This leads us to the following definition. Suppose that we have two integers $d,\,n$ and  the partial data of a graded  ring structure $R^*$, namely
finite dimensional vector spaces
$R^d,\,R^{-(n+1)+id}$ for $i$ such that $-(n+1)+id\geq 0$ with multiplication maps
\begin{eqnarray}\label{eqpultipli} \mu_i:  R^d\otimes R^{-(n+1)+id}\rightarrow R^{-(n+1)+(i+1)d}.
\end{eqnarray}
 When  $d$  divides $n+1$,  we  get all the upper-indices divisible by $d$, and an actual  ring structure $R^{d*}$, but in general  (\ref{eqpultipli}) is the sort of data provided by the  infinitesimal variation of Hodge structure of a hypersurface of degree $d$ in $\mathbb{P}^{n}$.
 Let   $S^k$ be the degree $k$ part of the  polynomial ring in $n+1$ variables.
 \begin{Defi}\label{defipoly} A polynomial structure in $n+1$ variables for the partial data of a graded  ring structure
 $$(R^d,\,R^{-(n+1)+id},\mu_i)$$ is the data   of a rank $n+1$ base-point free linear subspace  $J\subset S^{d-1}$  generating a graded  ideal $J^*\subset S^*$, of a linear  isomorphism $S^d/J^d\cong  R^d$ and, for all $i$,   of
  linear  isomorphisms
 $$S^{-(n+1)+id}/J^{-(n+1)+id}\cong R^{-(n+1)+id},$$
compatible with the multiplication maps, i.e.
 making the following diagrams commutative:
 \begin{eqnarray}\label{numerodiag1}
 \label{diagram} \xymatrix{
&S^d\otimes S^{-(n+1)+id}\ar[r]\ar[d]^{\cong}&S^{-(n+1)+(i+1)d}\ar[d]^{\cong}\\
&R^d\otimes R^{-(n+1)+id}\ar[r]^{\mu_i}&R^{-(n+1)+(i+1)d}.}
\end{eqnarray}
 \end{Defi}
The group ${\rm GL}(n+1)$  acts in the obvious way on the set of polynomial structures. We will say that the polynomial structure
of $(R^d,\,R^{-(n+1)+id},\,\mu_i)$ is unique if all its polynomial structures are conjugate under
${\rm GL}(n+1)$. As explained above,  Donagi's Theorem \ref{theodonagireform} has the more precise form that, under some assumptions on $(d,n)$,
the polynomial  structure of the infinitesimal variation of Hodge structure
$(R^d_f,\,R^{-(n+1)+id}_f,\,\mu_i)$ of a smooth hypersurface $X_f$ is unique, and this is sufficient to imply the generic Torelli theorem for hypersurfaces of these degree and dimension.
We will prove a similar statement in the case (2)  (that is degree $d=4$) of Theorem \ref{theodonagivoisin}, at least for generic $f$ and $n$ large enough.

For  the main series of cases not covered by Donagi's theorem, namely when $d$ divides $n+1$, we have not been able to prove the uniqueness of the polynomial structure of $R^{d*}_f$ (even for generic $f$), although it is likely to be true (and it is proved in \cite{voisin} for $d=5$, $n=4$). We conclude this section by the proof of  a weaker statement that provides evidence for the uniqueness.
We will say that a polynomial structure is rigid if its small deformations are given by its  orbit under ${\rm GL}(n+1)$.
We have the following
\begin{prop}\label{prorigid} Assume $n+1\geq 8$ and $d\geq 6$. Let $f\in S^{d}$ be a generic  homogeneous  polynomial of degree
$d$ in $n+1$ variables and $R_{f}^{d*}$ be its Jacobian  ring in degrees divisible by $d$. Then
the natural polynomial structure
$$S^{d*}\rightarrow R^{d*}_{f}$$
given by the quotient map
is rigid.
\end{prop}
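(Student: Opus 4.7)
The plan is to compare the tangent space to the set of polynomial structures on $R_f^{d*}$ at the natural polynomial structure $\pi_f \colon S^{d*} \twoheadrightarrow R_f^{d*}$ with the tangent to the $Gl(n+1)$-orbit, and to show they agree for generic $f$ under the numerical hypotheses.

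Write a first-order deformation of $\pi_f$ as $\pi_\epsilon = \pi_f + \epsilon \eta$. The graded ring map condition forces $\eta$ to be a derivation $\eta \in \mathrm{Der}(S^{d*}, R_f^{d*})$. The requirement that $\ker \pi_\epsilon$ still be of the form $J_\epsilon^{d*}$ for some $(n+1)$-dimensional subspace $J_\epsilon \subset S^{d-1}$ deforming $J_f = \langle \partial_0 f, \ldots, \partial_n f\rangle$ forces $\eta|_{J_f^d}$ to factor through $J_f$, namely $\eta(Xq) = -X\phi(q)$ for some $\phi \in \mathrm{Hom}(J_f, R_f^{d-1})$. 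This factorization is well-defined because the partials form a regular sequence, so $J_f^d = S^1 \otimes J_f$ injectively for $d \geq 3$. Two deformations determine equivalent polynomial structures if they differ by $\delta \circ \pi_f$ for some $\delta \in \mathrm{Der}(R_f^{d*})$.

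For $u \in \mathfrak{gl}(n+1)$ acting as a derivation $D_u$ of $S$, the infinitesimal $Gl(n+1)$-action gives $\eta_u = \pi_f \circ D_u$, and using the identity $D_u \partial_i f \equiv \partial_i D_u f \pmod{J_f}$ one finds the corresponding $\phi_u(\partial_i f) = -\partial_i (D_u f) \bmod J_f$. Rigidity amounts to the converse: if $\phi$ is admissible---i.e., if the first-order deformation of multiplication on $R_f^{d*}$ induced by $\phi$, namely the $2$-cocycle
\begin{equation*}
\alpha_\phi(a,b) = -\phi\bigl(\widetilde{ab} - \widetilde{a \cdot_f b}\bigr),
\end{equation*}
(where $\phi$ is extended by Leibniz to the Jacobian ideal in all degrees) is a coboundary---then $\phi = \phi_u$ for some $u$, up to the $\mathrm{Der}(R_f^{d*})$-ambiguity.

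I would carry this out by translating the coboundary condition $\alpha_\phi = \partial \delta$ into a system of multiplicative identities in $R_f^{d*}$, solved degree by degree using the Koszul resolution of the Jacobian ideal and the Macaulay duality $R_f^k \otimes R_f^{N-k} \to R_f^N \cong \mathbb{C}$. The hypotheses $n+1 \geq 8$ and $d \geq 6$ should ensure that the relevant multiplication maps in low degrees---in particular $S^1 \otimes R_f^{d-1} \to R_f^d$ and $R_f^d \otimes R_f^d \to R_f^{2d}$---have maximal generic rank, forcing $\phi$ into the image of $\mathfrak{gl}(n+1)$. The main obstacle is the delicate bookkeeping of these coboundary equations across several graded pieces, since the Leibniz rule intertwines them; the numerical bounds are precisely what is needed so that the relevant linear systems have the expected rank and the argument closes up.
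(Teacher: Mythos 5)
Your deformation-theoretic setup (first-order deformations of $\pi_f$ as $\pi_f$-derivations, the kernel condition producing a map $\phi\in{\rm Hom}(J_f,R_f^{d-1})$ via $\eta(Xq)=-X\phi(q)$, and the comparison with the infinitesimal $\mathfrak{gl}(n+1)$-action) is coherent, but the proof stops exactly where the mathematical content begins. The entire burden of the proposition is the final implication -- that an admissible $\phi$ must come from $\mathfrak{gl}(n+1)$ -- and for this you offer only the assertion that the numerical hypotheses ``should ensure'' that the relevant linear systems have expected rank, with the bookkeeping acknowledged as the main obstacle. That bookkeeping is not routine: it is precisely the point at which the argument can fail (and does fail for small $d,n$), and no mechanism is given for actually solving the coboundary equations. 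There is also a flaw in your equivalence relation: you declare two deformations equivalent when they differ by $\delta\circ\pi_f$ with $\delta\in{\rm Der}(R_f^{d*})$, but the proposition asks for rigidity under $Gl(n+1)$ acting on the source; a derivation of the partial ring $R_f^{d*}$ (which only sees degrees divisible by $d$) need not a priori come from $\mathfrak{gl}(n+1)$, so quotienting by ${\rm Der}(R_f^{d*})$ proves a weaker statement unless you separately control that space -- which is again a statement of the same difficulty as the one you are trying to prove.

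For comparison, the paper's proof avoids cocycle calculus entirely and isolates one concrete geometric rigidity statement. A polynomial structure carries along the family of subspaces $\phi_t(xS^{d-1})\subset R_f^d$, $x\in S^1$, each of which satisfies the closed determinantal condition $\dim R_f^d\cdot W\le r_{2d-1}$; Proposition \ref{proleIx} shows that the locus $\{xR_f^{d-1}\}$ is a \emph{reduced} component of that determinantal locus, via a tangent-space computation whose heart is Lemma \ref{lepourmorRmod}: every ``$R_f^d$-module morphism'' $R_f^{d-1}\to R_f^d/xR_f^{d-1}$ is multiplication by a linear form (proved by a descending induction using injectivity of multiplication by generic elements, checked on the Fermat polynomial via hard Lefschetz). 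This forces $\phi_t(xS^{d-1})=\psi_t(x)R_f^{d-1}$ for a projective automorphism $\psi_t$ of $\mathbb{P}(S^1)$, which is then normalized away by $Gl(n+1)$; the symmetrizer lemma and Donagi's argument recover the rest of the structure. If you want to complete your approach, you would in effect have to reprove something equivalent to Lemma \ref{lepourmorRmod}; as written, the proposal does not contain that step.
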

\begin{rema}{\rm The case where  $d=4$ and $n+1\cong 2$ mod. $4$  will be studied in next section. We will prove there, using a different recipe, that the polynomial structure on $R^{2*}_f$  is unique for $n$ large enough.
}
\end{rema}
\begin{rema}{\rm Proposition \ref{prorigid} implies that the natural polynomial structure of $R_{f_{\bullet}}^{d*}$ for a generic rank $n+1$ regular sequence $f_{\bullet}$ of degree $d-1$ homogeneous polynomials is rigid.}
\end{rema}
We will use in fact only the multiplication map in degree $d$
$$\mu: R^d_{f}\times R^d_{f}\rightarrow R^{2d}_{f}.$$
  Proposition \ref{prorigid} will  be implied by Proposition
\ref{proleIx} below.
For our original   polynomial structure on $R_f^{d*}$, and for each $x\in S^1$, we get a pair of vector subspaces
\begin{eqnarray}\label{eqideals} I_x^{d}:=xR^{d-1}_{f}\subset R^{d}_{f}, \,\,I_x^{2d}:=xR^{2d-1}_{f}\subset R^{2d}_{f},
\end{eqnarray}
which form an ideal in the sense that
\begin{eqnarray} \label{eqconideal} R^d_{f}I_x^d\subset I_x^{2d}.
\end{eqnarray}
It is not hard  to see  that the multiplication map by $x$, from $R^{d-1}_{f}$ to $R^{d}_{f}$, is  injective for a generic  $x\in S^1$  when
$f$ is  generic with   $d\geq 4$ and $n\geq 3$ (or $d\geq 3$ and $n\geq 5$). In fact, we even have (statement (ii) will be used only later on)
\begin{lemm}\label{claimpourplustardaussi} (i) The multiplication map by $x$ is injective on $R^{2d-1}_{f}$ when $f$ is generic, $x\in S^1$ is generic and  \begin{eqnarray}
\label{eqpourineqnewfinal} 2(2d-1)<(d-2)(n+1)
\end{eqnarray} (for example, $n+1\geq 5$ and $d>8$, or $n+1\geq 6$ and $d>4$ work).

(ii) The multiplication map by $x$  is injective  on $R^{3d-1}_{f}$ when $f$ is generic, $x\in S^1$ is generic and  $$2(3d-1)<(d-2)(n+1)$$  (for example, $n+1\geq 5$ and $d>8$, or $n+1\geq 6$ and $d>4$ work).

(iii) The multiplication map by $x^l$  is injective  on $R^{k}_{f}$ when $f$ is generic, $x\in S^1$ is generic and  $$2k+l\leq (d-2)(n+1).$$
\end{lemm}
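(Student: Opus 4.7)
Statement (iii) subsumes (i) and (ii) (take $(k,l)=(2d-1,1)$ and $(k,l)=(3d-1,1)$ respectively, and rewrite the strict inequalities as $2k+l\le N$), so the plan is to establish (iii). For $f$ in the open locus of $S^d$ where $X_f$ is smooth, $\partial f/\partial X_0,\ldots,\partial f/\partial X_n$ is a regular sequence and $R_f$ is a graded Artinian Gorenstein complete intersection of socle degree $N=(d-2)(n+1)$, with Poincar\'e series $(1+t+\cdots+t^{d-2})^{n+1}$; in particular the Hilbert function $h_m=\dim R_f^m$ does not depend on $f$. Since the rank of $\cdot x^l : R_f^k\to R_f^{k+l}$ is lower-semicontinuous in the pair $(f,x)\in S^d\times S^1$, proving injectivity for generic $(f,x)$ reduces to exhibiting one pair for which this map has rank $h_k$.

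The natural specialization is the Fermat pair $f_0=\sum_{i=0}^n X_i^d$ together with $x_0=\sum_{i=0}^n X_i$. Then $J_{f_0}=(X_0^{d-1},\ldots,X_n^{d-1})$, so
$$R_{f_0}=\bigotimes_{i=0}^n \mathbb{C}[X_i]/(X_i^{d-1})$$
is a monomial complete intersection. The key input is Stanley's theorem that such rings satisfy the Strong Lefschetz Property, with $x_0=\sum X_i$ a Lefschetz element: $\cdot x_0^l : R_{f_0}^m\to R_{f_0}^{m+l}$ has maximal rank for every $m$ and every $l\ge 1$. The underlying mechanism is that each factor $\mathbb{C}[X_i]/(X_i^{d-1})$ carries the irreducible $\mathfrak{sl}_2$-representation of dimension $d-1$, and $x_0$ acts on the tensor product as a raising operator whose powers realize the weight-space isomorphisms predicted by $\mathfrak{sl}_2$-representation theory.

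The hypothesis $2k+l\le N$ rewrites as $k+l\le N-k$, placing $k+l$ at least as close to the center $N/2$ as $k$ is. By the palindromic unimodality of the coefficients of $(1+t+\cdots+t^{d-2})^{n+1}$ (a consequence of the SLP above, or equivalently of the $\mathfrak{sl}_2$-decomposition), this forces $h_{k+l}\ge h_k$; hence the maximal rank of $\cdot x_0^l$ is exactly $h_k$, i.e., injectivity holds at $(f_0,x_0)$, and semicontinuity extends the conclusion to generic $(f,x)$. The only real obstacle is the invocation of the Strong Lefschetz Property for monomial complete intersections; should a self-contained treatment be preferred, one can replace the citation by a direct $\mathfrak{sl}_2$-computation on $R_{f_0}$, decomposing it into irreducible summands on which $\cdot x_0$ acts as a multiple of the standard raising operator and hence is injective on any weight space strictly below the highest one.
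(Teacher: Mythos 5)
Your proof is correct and follows essentially the same route as the paper: specialize to the Fermat polynomial, identify $R_{f_{Fermat}}$ with the tensor product $\bigotimes_i\mathbb{C}[X_i]/(X_i^{d-1})\cong H^{2*}((\mathbb{P}^{d-2})^{n+1},\mathbb{C})$, and apply the hard Lefschetz theorem for the ample class $x=\sum_i x_i$ (which is exactly the content of Stanley's SLP theorem that you cite), then conclude for generic $(f,x)$ by semicontinuity. The paper leaves the semicontinuity step and the reduction of (i),(ii) to (iii) implicit, but the argument is the same.
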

\begin{proof} As the dimensions of the vector spaces $R^k_f$ are independent of $f$ (assumed  to define a smooth hypersurface), the conclusions are open properties of $f$, hence it suffices to check them for a particular $f$. Take for $f$  the  Fermat polynomial  $f_{\rm Fermat}=\sum_{i=0}^nX_i^d$. Then
$R_{f_{\rm Fermat}}^*$ identifies with the cohomology  ring $H^{2*}((\mathbb{P}^{d-2})^{n+1},\mathbb{C})$ (indeed, it has generators $X_i$ and relations $X_i^{d-1}=0$) and $x=\sum_iX_i$
corresponds to an ample class in $H^{2}((\mathbb{P}^{d-2})^{n+1},\mathbb{C})$. By the  hard Leschetz theorem for $(\mathbb{P}^{d-2})^{n+1}$,
the multiplication by $x$ is thus injective on $R^{2d-1}_{f_{\rm Fermat}}$ if $2(2d-1)<(d-2)(n+1)$,
and injective on $R^{3d-1}_{f_{\rm Fermat}}$ if $2(3d-1)<(d-2)(n+1)$. More generally, the Lefschetz isomorphism for the
power $x^l$ gives the injectivity of $x^l$  on $R^k_{f_{\rm Fermat}}$ when $2k+l\leq (d-2)(n+1)$.
\end{proof}
\begin{rema}{\rm
The estimate in (i) is optimal for dimension reasons. Indeed, the dimensions of the graded pieces $R^{k}_{f}$ are increasing  in the interval $k\leq \frac{(d-2)(n+1)}{2}$, and decreasing in the interval  $ \frac{(d-2)(n+1)}{2}\leq k\leq (d-2)(n+1)$.}
\end{rema}
It  follows from Lemma \ref{claimpourplustardaussi} that, assuming inequality  (\ref{eqpourineqnewfinal}), the space  $I_x^d$ defined in (\ref{eqideals}) has generic  dimension
$r_{d-1}:={\rm dim}\, R^{d-1}_{f} $, while $I_x^{2d}$ has generic dimension $r_{2d-1}:={\rm dim}\, R^{2d-1}_{f} $.

\begin{prop}\label{proleIx} If $f$ is generic of degree $d$ in $n+1$ variables, and  $d\geq 6,\,\,n\geq 9$, then  the
subset    $Z_{\rm ideal}=\{[I^d_x]\in G(r_{d-1},R^d_{f}),\,x\in S^1\}$ of the Grassmannian $G(r_{d-1},R^d_{f})$  is a reduced  component of
the  closed algebraic subset  $Z\subset
G(r_{d-1},R^d_{f})$ defined as
\begin{eqnarray}\label{eqdefiZideal} Z=\{[W]\in G(r_{d-1},R^d_{f}),\,{\rm dim}\,(R^d_{f}\cdot W)\leq r_{2d-1}\}.
\end{eqnarray}
\end{prop}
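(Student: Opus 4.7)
My plan is to show that $Z_{ideal}$ is irreducible of dimension $n$ and that the Zariski tangent space to $Z$ at a generic point $[I^d_x]$ of $Z_{ideal}$ also has dimension $n$; this exhibits $Z_{ideal}$ as a reduced irreducible component of $Z$ at the generic point. The argument has three steps, the last of which is the main obstacle.

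First I would verify that $Z_{ideal}$ is irreducible of dimension $n$. The map $\mathbb{P}(S^1)\dashrightarrow G(r_{d-1},R^d_f)$, $[x]\mapsto[I^d_x=xR^{d-1}_f]$, is defined wherever multiplication by $x$ is injective on $R^{d-1}_{f}$ (a nonempty open set by Lemma~\ref{claimpourplustardaussi}) and is generically injective: by the Gorenstein duality of $R_f$ (socle in degree $N=(n+1)(d-2)$), one has $(I^d_x)^\perp=\mathrm{ann}_{R^{N-d}_f}(x)$, and this annihilator recovers $[x]\in\mathbb{P}(S^1)$ for generic $x$. Hence $Z_{ideal}$ is irreducible of dimension $n$ and clearly contained in $Z$.

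Next I would compute $T_{[W]}Z$ at a generic point $W=I^d_x$. At this point $M(W)=xR^{2d-1}_f$ has dimension exactly $r_{2d-1}$ by Lemma~\ref{claimpourplustardaussi}(i), and $Z$ is locally the degeneracy locus of the family of multiplication maps $\mu_W:R^d_f\otimes W\to R^{2d}_f$, so the standard tangent space formula for a rank $\leq r_{2d-1}$ condition gives
\[
T_{[W]}Z=\bigl\{\phi\in\mathrm{Hom}(W,R^d_f/W)\;:\;\sum_i a_i\tilde\phi(w_i)\in xR^{2d-1}_f\text{ whenever }\sum_i a_iw_i=0\text{ in }R^{2d}_f\bigr\}.
\]
Writing $w_i=xr_i$ and setting $\psi(r):=\phi(xr)$, the injectivity of $\cdot x$ on $R^{2d-1}_f$ rewrites the condition as: for every syzygy $\sum a_ir_i=0$ in $R^{2d-1}_f$, $\sum a_i\tilde\psi(r_i)\equiv 0\pmod{xR^{2d-1}_f}$. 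The Schiffer tangent vectors $\psi_v(r)=\overline{vr}$ for $v\in S^1$ obviously satisfy this condition and span an $n$-dimensional subspace of $T_{[W]}Z$ (the kernel of $v\mapsto\psi_v$ is exactly $\mathbb{C}x$, by a short argument using injectivity of multiplications in the relevant degrees).

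The main obstacle is the reverse inclusion: every tangent vector is Schiffer. By surjectivity of multiplication $R^d_f\otimes R^{d-1}_f\twoheadrightarrow R^{2d-1}_f$ (Macaulay's theorem, using $2d-1\leq N$), the tangent condition is equivalent to the existence of a well-defined linear map $\bar\psi^{(1)}:R^{2d-1}_f\to R^{2d}_f/xR^{2d-1}_f$ with $\bar\psi^{(1)}(ar)=\overline{a\tilde\psi(r)}$, and this map is automatically $R^d_f$-linear. Iterating via the surjectivity of multiplications $R^d_f\otimes R^{k}_f\twoheadrightarrow R^{d+k}_f$ in further degrees, one extends to a graded $R_f$-linear map $\bar\Psi:R^{\geq d-1}_f\to R^{*+1}_f/xR^*_f$ of degree $+1$. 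Finally, Gorenstein duality for $R_f$ and the structure of the quotient $\bar R_f:=R_f/(x)$ force such a graded $R_f$-linear map to be determined by a single element of $\bar R_f^{\,1}=S^1/\mathbb{C}x$ (because $R_f$, viewed as a graded $R_f$-module, is cyclic and one can reconstruct $\bar\Psi$ in low degrees from its values in high degree via the perfect Macaulay pairing). This yields $\dim T_{[W]}Z\leq n$ and hence identifies $Z_{ideal}$ as a reduced component.

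The hardest step is thus the extension-and-rigidity step: extending $\bar\psi$ to all degrees requires surjectivity of enough multiplication maps, and identifying extensions with $S^1/\mathbb{C}x$ requires effective use of the Gorenstein duality of $R_f$ and control of the $x$-torsion structure. The numerical hypotheses $d\geq 6,\,n\geq 9$ enter precisely to ensure the relevant multiplication maps are surjective where needed (for extension) and injective where needed (so the Macaulay pairing is perfect in the relevant range); a careful bookkeeping of these range conditions is the technical core.
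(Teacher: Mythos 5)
Your setup is right and matches the paper: you identify $Z_{ideal}$ as $n$-dimensional, you compute $T_{[I^d_x]}Z$ correctly as the space of ``$R^d_f$-module morphisms'' $h:R^{d-1}_f\to R^d_f/xR^{d-1}_f$ (your reformulation via $\psi(r)=\phi(xr)$ and injectivity of $\cdot x$ on $R^{2d-1}_f$ is exactly the paper's diagram with the auxiliary map $h'$), and you correctly isolate the remaining problem: show that every such module morphism is multiplication by some $y\in S^1/\mathbb{C}x$. That last statement is the entire content of the proposition (it is the paper's Lemma \ref{lepourmorRmod}), and your treatment of it is where the gap lies.

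Your strategy for that step is to \emph{ascend} in degree -- extend $\bar\psi$ to a graded $R_f$-linear map on $R^{\geq d-1}_f$ and then invoke ``Gorenstein duality and the perfect Macaulay pairing'' to force it to be multiplication by an element of $S^1/\mathbb{C}x$. As written this does not go through. First, the extension is only natural in degrees $\equiv d-1\pmod d$, since the only module structure you have is over $R^d_f$; you do not get a map on all of $R^{\geq d-1}_f$ without further argument. More seriously, the rigidity claim is not a formal consequence of duality: the target module $R_f/(x)$ is \emph{not} Gorenstein ($x$ is a zerodivisor on the Artinian ring $R_f$), its dual under the Macaulay pairing is ${\rm Ann}(x)$, which is concentrated in degrees near $N=(n+1)(d-2)$, far from the degrees $d-1,\,d$ where your map lives, and what you actually need is a vanishing of the relevant graded piece of ${\rm Ext}^1_{R_f}(R_f/\mathfrak{m}^{d-1},R_f/(x))$ -- a genuine computation that your sketch does not supply. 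In effect you have restated the proposition rather than proved it.

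The paper goes in the opposite direction: it \emph{descends} in degree. Using an auxiliary injectivity statement (Claim \ref{claimnouveau}: multiplication by a generic $q\in R^{d-1}_f$ is injective on $R^{d+1}_f/\langle x\rangle$, checked on the Fermat polynomial by splitting the variables), it first shows $h$ kills $\langle x\rangle$ and already respects syzygies in $S^1\otimes R^{d-1}_f$, i.e.\ satisfies (\ref{eqnoyauvan}). Then, using the Koszul-type factorization (\ref{eqimplique}) (if $yp+zq=0$ in $R^{d+1}_f/\langle x\rangle$ then $p=zr$, $q=-yr$), it divides $\overline h$ successively to maps $\overline h'',\overline h''',\overline h^{iv}$ defined on $R^{d-2}_f/\langle x\rangle,\,R^{d-3}_f/\langle x\rangle,\,R^{d-4}_f/\langle x\rangle$. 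Since $R^i_f=S^i$ for $i\leq d-2$, the bottom map is visibly multiplication by a linear form, and hence so is $h$. If you want to salvage your approach you would have to prove the Ext-vanishing directly; otherwise you should replace the third step by a descent of this kind.
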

\begin{proof}
 The tangent space to $Z_{\rm ideal}$ at the point   $[I^d_x]\in G(r_{d-1},R^d_{f})$ is
the image of $S^1/\langle x\rangle$ in ${\rm Hom}\,(R^{d-1}_{f},R^d_{f}/xR^{d-1}_{f})=T_{G(r_{d-1},R^d_{f}),[I_x^d]}$ given by multiplication by $y\in S^1/\langle x\rangle$, where we identify $I_x^d$ with $R^{d-1}_{f}$ via multiplication by $x$.
Let us now compute the Zariski tangent space  to $Z$ at $[I^d_x]$ for $f$ and $x$ generic. As ${\rm dim}\,I_x^{2d}=r_{2d-1}$ is maximal by the claim above, the condition (\ref{eqdefiZideal}) provides the following infinitesimal conditions:
\begin{eqnarray}\label{eqdefitangentZideal}
T_{Z,[I_x^d]}=\{h\in {\rm Hom}\,(R^{d-1}_{f},R^d_{f}/xR^{d-1}_{f}),\, \sum_iA_ih(B_i)=0\,\,{\rm in}\,\,R^{2d}_{f}/xR^{d-1}_{f},
\\
\nonumber
{\rm for \,\,any}\,\,\,K=\sum_iA_i\otimes B_i\in R^d_f\otimes R^{d-1}_{f}\,\,{\rm such\,\,that}\,\,\sum_iA_iB_i=0\,\,{\rm in} \,\,R^{2d-1}_{f}\}.
\end{eqnarray}
Equation (\ref{eqdefitangentZideal}) says that $h: R^{d-1}_{f}\rightarrow R^d_{f}/xR^{d-1}_{f}$ is a ``morphism of
$R^d_{f}$-modules'', the set of which we will denote by
${\rm Mor}_{R^d_f}(R^{d-1}_{f},R^d_{f}/xR^{d-1}_{f})$, in the sense that we have a commutative diagram for some $h'\in {\rm Hom}\,(R^{2d-1}_{f},R^{2d}_{f}/\langle x\rangle)$
\begin{eqnarray}\label{numerodiagpouridealx}
 \xymatrix{
&R^d_{f}\otimes R^{d-1}_{f}\ar[r]\ar[d]^{Id\otimes h}& R^{2d-1}_{f} \ar[d]^{h'}\\
&R^d_{{f}}\otimes R^{d}_{f}/\langle x\rangle\ar[r]& R^{2d}_{f}/\langle x\rangle
,}
\end{eqnarray}
where the horizontal maps are given by multiplication.
The equality $T_{Z_{\rm ideal}}=T_Z$ at the point $[I_x^d]$ is thus equivalent to the fact  that all the ``$R^d_{f}$-modules morphisms''
$h: R^{d-1}_{f}\rightarrow R^d_{f}/xR^{d-1}_{f}$, are given by multiplication by some $y\in S^1$, followed by reduction mod $x$.
This is the statement of the following
\begin{lemm}\label{lepourmorRmod} Let ${f}$ be a generic  homogeneous  degree $d$ polynomial  in $n+1$ variables with $d \geq 5,\,n\geq 9$   (or $d\geq 6$ and $n\geq 7$), and let $x\in S^1$ be generic.
Then the natural map $S^1/\langle x\rangle\rightarrow {\rm Mor}_{R_{f}^d}(R^{d-1}_{f},R^d_{f}/\langle x\rangle)$ is  surjective.
\end{lemm}
\begin{proof} The existence of  $h'$ as in  (\ref{numerodiagpouridealx}) says that
for any tensor $\sum_i A_i\otimes B_i\in R^d_{f}\otimes R^{d-1}_{f}$ such that
$\sum_iA_iB_i=0$ in $R^{2d-1}_{f}$,
$\sum_i A_ih(B_i)=0$ in $R^{2d}_{f}/\langle x\rangle$.
\begin{claim} \label{claimnouveau} Under the same assumptions as in Lemma \ref{lepourmorRmod}, for a generic $q\in R^{d-1}_{f}$, the multiplication map
$$q:R^{d+1}_{f}/\langle x\rangle\rightarrow R^{2d}_{f}/\langle x\rangle$$
is injective.
\end{claim}
\begin{proof}  This is proved again by looking at
the Fermat polynomial $f_{\rm Fermat}=\sum_iX_i^d$ and choosing carefully $x$ so that multiplication by $x$ is injective on
 $R^{2d-1}_{f_{\rm Fermat}}$, and multiplication by $q$ is injective on
 $R^{d+1}_{f_{\rm Fermat}}/\langle x\rangle$. We write
$f_{\rm Fermat}=f'_{\rm Fermat}+f''_{\rm Fermat}$, where $f'_{\rm Fermat}=\sum_{i=0}^{4}X_i^d$ and $f''_{\rm Fermat}=\sum_{i=5}^{n}X_i^d$. We take $x=\sum_{i=0}^4X_i$
and $q=(\sum_{i=5}^NX_i)^{d-1}$. We observe that
$$R_{f_{\rm Fermat}}^*\cong R_{f'_{\rm Fermat}}^*\otimes R_{f''_{\rm Fermat}}^*,$$
as graded rings,
and that $x$ acts by multiplication on the left term $ R_{f'_{\rm Fermat}}^*$, while $q$ acts by multiplication
on the right term $R_{f''_{\rm Fermat}}^*$. So it suffices to show that multiplication by $x$ is injective on
$ R_{f'_{\rm Fermat}}^k$ for $k\leq 2d-1$ and multiplication by $q$ is injective on
$ R_{f''_{\rm Fermat}}^k$ for $k\leq d+1$. The first statement follows from Lemma
\ref{claimpourplustardaussi} (i)  when $2(d+1)<5(d-2)$, hence when $d\geq 5$.
The second  statement holds by  Lemma
\ref{claimpourplustardaussi} (iii) when  $2(d+1)+d-1\leq (n-4)(d-2)$, and in particular if $d\geq 6$ and
$n\geq 9$.
\end{proof}
We deduce from Claim \ref{claimnouveau} that
for any tensor  $\sum_i A_i\otimes B_i\in S^1\otimes R^{d-1}_{f}$ such that
$\sum_iA_iB_i=0$ in $R^{d}_{f}$, we have
\begin{eqnarray}
\label{eqnoyauvan}\sum_i A_ih(B_i)=0\,\,{\rm in}\,\,R^{d+1}_{f}/\langle x\rangle,
\end{eqnarray}
since this becomes true after multiplication by $q$.
It follows now that
$h$ vanishes on $\langle x\rangle$. Indeed, let $b=xb'$. Then for any $y\in S^1$,
we have $yb=xb''$ with $b''=yb'$. Hence  by (\ref{eqnoyauvan}), we get
$yh(b)=xh(b'')=0$. Hence $yh(b)=0$ in $R^{d+1}_f/\langle x\rangle$ for any $y\in S^1$, and it follows,  by choosing $y$ such that multiplication by $y$ is injective on $R^d_{f}/\langle x\rangle$, that
$h(b)=0$ in $R^d_{f}/\langle x\rangle$. Thus $h$ induces a morphism
$$\overline{h}: R^{d-1}_{f}/\langle x\rangle\rightarrow  R^{d}_{f}/\langle x\rangle,$$
which also satisfies (\ref{eqnoyauvan}).
Assuming $d\geq 6,\,n\geq 9$, we now show by similar arguments as above  that for generic  $z,\,y\in S^1/\langle x\rangle$, the following holds.  For any $p,\,q\in R^d_{f}/\langle x\rangle$,
\begin{eqnarray}
\label{eqimplique} y p+zq=0\,\,{\rm in}\,\,R^{d+1}_{f}/\langle x\rangle \Rightarrow  p=zr,\,q=-yr,
\end{eqnarray}
for some $r\in R^{d-1}_f/\langle x\rangle$.
 Furthermore we already  know that  the multiplication map by $z$ from  $ R^d_{f}/\langle x\rangle$ to  $R^{d+1}_{f}/\langle x\rangle$ is injective.
It follows that there exists $$\overline{h}'': R^{d-2}_{f}/ \langle x\rangle\rightarrow  R^{d-1}_{f}/ \langle x\rangle$$
inducing $\overline{h}$, that is,
\begin{eqnarray}
\label{eqdefiprophseconde} \overline{h}(ap)=a\overline{h}''(p)
\end{eqnarray}
for any $p\in R^{d-2}_{f}/ \langle x\rangle$, and any $a\in S^1/\langle x\rangle$.
Indeed, $y$ and $z$ being as above, we have for any $p\in R^{d-2}_{f}/ \langle x\rangle$
$$ y(zp)-z(yp)=0\,\,{\rm in}\,\, R^{d}_{f},$$ hence
by (\ref{eqnoyauvan}), we get that
$y\overline{h}(zp)-z \overline{h}(yp)=0$ in $R^{d+1}_{f}/\langle x\rangle$, and  by (\ref{eqimplique}), this gives
$\overline{h}(zp)=z\overline{h}''(p)$, which defines $\overline{h}''$. One then shows that the map
$\overline{h}''$ so defined does not depend on $z$ and satisfies (\ref{eqdefiprophseconde}), which is easy.
To finish the proof, we construct similarly $\overline{h}''':R^{d-3}_{f}/\langle x\rangle
\rightarrow R^{d-2}_{f}/\langle x\rangle$ inducing $\overline{h}''$  and  $\overline{h}^{iv}:R^{d-4}_{f}/\langle x\rangle
\rightarrow R^{d-3}_{f}/\langle x\rangle$ inducing $\overline{h}'''$. As $R^i_{f}/\langle x\rangle=S^i/\langle x\rangle$ for $i\leq d-2$, it is immediate to show that $\overline{h}^{iv}$ is multiplication by some element of $S^1$, hence also $\overline{h}$.
\end{proof}
The proof of Proposition \ref{proleIx} is thus complete.
\end{proof}
\begin{proof}[Proof of Proposition \ref{prorigid}] Let $f$ be generic of degree $d\geq 6$ in $n+1\geq 10$ variables.
We first claim that for {\it any} $x\in S^1$, the multiplication map by $x:R^{d-1}_{f}\rightarrow R^{d}_{f}$ is injective, and that the morphism
 \begin{eqnarray}
 \label{eqinPhi} \Phi:\mathbb{P}(S^1)\rightarrow G(r_{d-1},R^d_{f}),\,\,x\mapsto xR^{d-1}_{f}\subset R^d_f
 \end{eqnarray}
so constructed is an embedding. None of these statements is  difficult to prove. The first statement says that
 if ${f}$ is a generic homogeneous  degree $d$ polynomials in $n+1$ variables, ${f}$ does not satisfy an
 equation $\partial_u(f)_{\mid H}=0$ for some hyperplane  $H \subset \mathbb{P}^n$ and vector field $u$ on $\mathbb{P}^n$. The obvious  dimension count shows that this holds if $h^0(\mathbb{P}^{n-1},\mathcal{O}(d))>n-1+\frac{(n+1)^2}{2}$, which holds if $d\geq 4,\,n\geq3$.
 As for the second statement, suppose that $xR^{d-1}_f=yR^{d-1}_f$ for some non-proportional
 $x,\,y\in S^1$. Then there is a subspace of dimension
 $\geq {\rm dim}\,S^{d-1}$ of pairs $(p,q)\in S^{d-1}\times S^{d-1}$ such that
 $xp=yq$ in $R^d_f$, that is $xp-yq\in J^d_f$.
 As the kernel of the map $x-y: S^{d-1}\times S^{d-1}\rightarrow S^{d}$ is of dimension ${\rm dim}\,S^{d-2}$,  this would imply that
 \begin{eqnarray}
 \label{eqinegdim}{\rm dim}\,J_f^d\cap {\rm Im}\,(x+y)\geq {\rm dim}\,S^{d-1}-{\rm dim}\,S^{d-2}=h^0(\mathbb{P}^{n-1},\mathcal{O}(d-1)).
 \end{eqnarray}
 As ${\rm dim}\,J_f^d=(n+1)^2$, (\ref{eqinegdim}) is impossible if $h^0(\mathbb{P}^{n-1},\mathcal{O}(d-1))>(n+1)^2$, which holds if $n\geq 5,\,d\geq 4$. We thus proved that the map  $\Phi$ of (\ref{eqinPhi}) is injective. That it is an immersion follows in the same way because the differential at $x$ is given by the multiplication map
 $$y\mapsto \mu_y: R^{d-1}_f\rightarrow R^d_f/xR^{d-1}_f,$$
 and  $\mu_y$ is zero if and only if $y R^{d-1}_f\subset xR^{d-1}_f$, which has just been excluded. The claim is thus proved.

It follows from  the claim  and from  Proposition \ref{proleIx}  that, if we have a family
of polynomial structures
$$\phi_t: S^{d*}\rightarrow R^{d*}_f,$$
with $\phi_0$ the natural one, then there is an isomorphism
$$\psi_t: \mathbb{P}(S^1)\cong  \mathbb{P}(S^{1}),$$
such that for any
$x\in S^{1}$,
$$\phi_t(x S^{d-1})=\psi_t(x)R^{d-1}_f.$$  Such a  projective isomorphism
is induced by a linear isomorphism
$$\tilde{\psi}_t: S^{1}\cong  S^{1},$$
and composing $\phi_t$ with the automorphism of $S^{d*}$  induced by  $\tilde{\psi}_t^{-1}$, we conclude that
we may assume that for any
$x\in S^{1}$,
\begin{eqnarray}\label{equationtransformee} \phi_t(x S^{d-1})=xR^{d-1}_f.
\end{eqnarray}
We claim that this implies $\phi_t: S^d\rightarrow R^d_f$ is the natural map of reduction  mod $J_f$.
To see this, choose a general $x$, so that the multiplication map by $x$ is injective on $R^{2d-1}_f$. The
polynomial structure given by $\phi_t$ and satisfying (\ref{equationtransformee})
provides two  linear maps
$$\phi'_t: S^{d-1}\rightarrow R^{d-1}_f,\,\phi''_t: S^{2d-1}\rightarrow R^{2d-1}_f,$$
such that $x\phi'_t=\phi_t\circ x: S^{d-1}\rightarrow R^d_f$, $x\phi''_t=\phi_t\circ x:S^{2d-1}\rightarrow R^{2d}_f$,
and
the injectivity of the map of multiplication by $x$ on $R^{2d-1}_f$ implies that the following diagram commutes, since it commutes after multiplying the maps by $x$.
\begin{eqnarray}\label{numerodiag27011440}
 \xymatrix{
&S^{d-1}\otimes S^d\ar[r]\ar[d]^{\phi'_t\otimes \phi_t}& S^{2d-1}\ar[d]^{\phi''_t}\\
&R^{d-1}_f\otimes R^d_f\ar[r]& R^{2d-1}_f
.}
\end{eqnarray}
The horizontal maps in the diagram above are the multiplication maps.
Following Donagi \cite{donagi}, the multiplication map on the bottom line determines the
polynomial structure of $R_f^*$, because it determines (for $d\geq3$)
$S^1$ and the multiplication map $S^1\otimes R^{d-1}_f\rightarrow R^d_f$ by the symmetrizer lemma (Proposition \ref{propsymm}).
The diagram (\ref{numerodiag27011440}) then says that
up to the action of an automorphism $g$ of $S^*$, the polynomial structure given by $(\phi'_t,\,\phi_t)$ is the standard one. Finally, as $g$ must act  trivially on the space $Z_{\rm ideal}$ of ideals by (\ref{equationtransformee}), $g$ is  proportional to the identity.
\end{proof}

\section{The case of degree $4$  \label{sec46}}
We explain in this section how to recover the polynomial structure of the infinitesimal variation of Hodge structure of a generic hypersurface of degree $4$
so as to prove  Theorem   \ref{theodonagivoisin} (2),  namely the cases where
 $d=4$, $n=4m+1$,  with $m$ large. Note that the methods of Schiffer variations that we will develop later would presumably also apply to this case, but it is  much more difficult and does not prove Theorem \ref{theodonagireform} (saying that one can recover a hypersurface from its IVHS).

The congruence conditions is  equivalent   to the fact that we have  ${\rm gcd}(4,n+1)=2$.
The infinitesimal variation of Hodge structure (as translated in (\ref{eqmulti}) using Theorem \ref{theCagri})
\begin{eqnarray}
\label{eqpourvard4} R_f^4\rightarrow \oplus_l {\rm Hom}( R^{4l-n-1}_f, R^{4(l+1)-n-1}_f),
\end{eqnarray}
has for smallest degree term the multiplication map
$$R^{4}_f\otimes R^{2}_f\rightarrow R^{6}_f$$
and  the  symmetrizer lemma  (see  Proposition \ref{propsymm})  allows us to reconstruct
in these cases the whole ring
$R^{2*}_f$, and in particular the multiplication map
\begin{eqnarray}\label{eqmultpetitdeg}  R^{2}_f\otimes R^{2}_f\rightarrow R^4_f.
\end{eqnarray}
(Note that $R^{2}_f=S^{2}$.)
We thus  only have to explain  in both cases how to recover the polynomial structure of (\ref{eqpourvard4}) from (\ref{eqmultpetitdeg}), at least for a generic polynomial $f$.
We use  the notation   $\mathcal{S}q^{2l}_f\subset R^{2l}_f$ for  the set of squares
$$\mathcal{S}q^{2l}_f=\{A^2,\,A\in R^l_f\}\subset  R^{2l}_f.$$
This is a closed algebraic subset which is a cone in $R_f^{2l}$ and we will denote by $\mathbb{P}(\mathcal{S}q^{2l})$ the corresponding closed algebraic subset of $\mathbb{P}(R^{2l}_f)$.
When $d=4,\,d'=2$, (\ref{eqmultpetitdeg}) determines $\mathcal{S}q^{4}_f$. Our proof of Theorem \ref{theodonagivoisin} (2) will be based (following Donagi's strategy described in the previous section) on the following
\begin{claim} \label{clanew21} The algebraic subset
$\mathcal{S}q^{2}_f\subset  R^{2}_f=S^2$ determines the   polynomial structure of the even degree Jacobian ring $R^{2*}_f$ of $f$.
\end{claim}
\begin{proof} Indeed,  passing to the projectivization of these affine cones,  $\mathbb{P}(\mathcal{S}q^{2}_f)$ is the
second Veronese embedding of $\mathbb{P}(S^1)$ in $\mathbb{P}(S^2)$.  Thus the positive generator $H$
of ${\rm Pic}\,(\mathbb{P}(\mathcal{S}q^{2}_f))$ satisfies the property that $H^0(\mathbb{P}(\mathcal{S}q^{2}_f), H)=:V$ has dimension $n+1$ and the  restriction map
$(S^2)^*\rightarrow {\rm Sym}^2V$ is an isomorphism. The dual isomorphism gives the desired isomorphism
${\rm Sym}^2S^1\cong S^2$, with $S^1:=V^*$.
\end{proof}

We observe now  that
the closed algebraic subset $ \mathcal{S}q^{2}_f\subset R^2_f$  has the following property
\begin{eqnarray} \label{eqcarre4} \forall A,\,B\in   \mathcal{S}q^{2}_f,\,AB\in  \mathcal{S}q^{4}_f,
\end{eqnarray}

We prove now the  following result, which by  Claim \ref{clanew21} concludes the proof of Theorem \ref{theodonagivoisin} (2).
\begin{prop}\label{procarre46}
 Let $f$ be a generic homogeneous polynomial of degree $4$ in $n+1$ variables, with $n\geq 599$. Then
the only subvariety $T\subset R^2_f=S^2$ of dimension $\geq n+1$  satisfying the condition
$$AB\in \mathcal{S}q^{4}_f\,\,{\rm for\,\,any}\,\,A,\,B\in T$$
is $ \mathcal{S}q^{2}_f$.
\end{prop}
Note that in this statement, we can clearly assume that $T$ is a cone, since the conditions are homogeneous.
\begin{proof}[Proof of Proposition \ref{procarre46}] We observe that by a proper specialization
 argument, the schematic version of the  statement, saying  that, furthermore, equations (\ref{eqcarre4}) define, at least generically, the reduced structure of   $ \mathcal{S}q^{2}_f$,   is an open condition on the set of polynomials $f$ for which
$R_f^4$, or equivalently $J^4_f$,  has the right dimension.
We  will thus prove this schematic version
for one specific $f$, for which $J^4_f$ has the right dimension.

Let us first explain the specific polynomials we will use.
We will first choose
general linear sections $\mathbb{P}\cap Pf_4$ of the Pfaffian quartic $\mathcal{P}f_4\subset \mathbb{P}(\bigwedge^2 V_8)$,
where $\mathbb{P}\subset \mathbb{P}(\bigwedge^2 V_8)$ is a linear subspace of dimension
$23$ or $24$.
We get this way polynomials $f_i$ of degree $4$  in $24$ or $25$  variables.
In higher dimension, we will then consider polynomials  of the form
$$f=f_{1}(X_{1,1},\ldots,X_{1,i_1})+\ldots+f_{l}(X_{l,1},\ldots,X_{l,i_l}),$$ with $i_1,\ldots,\,i_l\in\{24,\,25\}$,
which  allows us to construct degree $4$  polynomials with
any number $n+1$  of variables starting from $600$.

If a degree $d$ polynomial $f$ defines a smooth hypersurface $X_f$, the Jacobian ideal $J_f$ is generated by the regular sequence $J_f^{d-1}$ of degree $d-1$ polynomials, hence, assuming  $d\geq3$, we find that the multiplication map $S^1\otimes J_f^{d-1}\rightarrow J_f^d$ is an isomorphism, so that
${\rm dim}\,J_f^d=(n+1)^2$.  In general, assuming $X_f$ reduced with regular locus $X_{f,\rm reg}$ and   using the normal bundle sequence
\begin{eqnarray}\label{eqnobuseq} 0\rightarrow \mathcal{T}_{X_{f,{\rm reg}}}\rightarrow T_{\mathbb{P}^n|X_{f,{\rm reg}}}\rightarrow \mathcal{O}_{X_{f,{\rm reg}}}(d)\rightarrow 0,
\end{eqnarray}
we see that the kernel of the map $S^1\otimes J_f^{d-1}\rightarrow J_f^d$ identifies naturally with the set of infinitesimal automorphisms of $X_f$ induced by an infinitesimal automorphism of $\mathbb{P}^n$. The hypersurfaces $X_f$ discussed above are singular. We nevertheless have
\begin{lemm} \label{ledimpourJd} For a polynomial $f$ of the form above, $J_f^4$ has the right  dimension, i.e. $(n+1)^2$.
\end{lemm}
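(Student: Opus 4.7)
The plan is to prove this in two steps: first, a direct-sum reduction from the compound polynomial $f=f_1+\dots+f_l$ to each individual block $f_i$, and second, the verification for a single generic linear section of a Pfaffian hypersurface in the specified dimension.

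\textit{Step 1 (direct sum decomposition).} Since the $f_i$ involve pairwise disjoint blocks of variables, write $S=\bigotimes_{i=1}^l S_i$ with $S_i=\mathbb{C}[X_{i,\bullet}]$ a polynomial ring in $n_i+1$ variables, where $n+1=\sum_i(n_i+1)$. The partials split as $\partial f/\partial X_{i,j}=\partial f_i/\partial X_{i,j}\in S_i^{d-1}$. I would decompose $S^d$ by multidegree in the blocks: the span $P_i\subset S_i^{d-1}$ of the partials of $f_i$ sits in pure multidegree $(0,\ldots,d-1,\ldots,0)$ (the $i$-th entry), so multiplication by $S^1$ gives $S^1\cdot P_i=S_i^1\cdot P_i\,\oplus\,\bigoplus_{i'\neq i}S_{i'}^1\otimes P_i$, where the summands $S_{i'}^1\otimes P_i$ for different $(i,i')$ live in pairwise distinct multidegrees and each has dimension $(n_{i'}+1)(n_i+1)$, provided the $n_i+1$ partials of $f_i$ are linearly independent. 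The piece $S_i^1\cdot P_i$ is $J_{f_i}^d\subset S_i^d$ in its own multidegree. Summing over all blocks one obtains
\begin{equation*}
\dim J_f^d \;=\; \sum_i \dim J_{f_i}^d \;+\; \sum_{i\neq i'}(n_i+1)(n_{i'}+1).
\end{equation*}
Using the identity $(n+1)^2=\sum_i(n_i+1)^2+\sum_{i\neq i'}(n_i+1)(n_{i'}+1)$, the target equality $\dim J_f^d=(n+1)^2$ reduces to $\dim J_{f_i}^d=(n_i+1)^2$ for every $i$.

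\textit{Step 2 (Pfaffian building blocks).} For each $f_i$ obtained as a generic linear section of $\mathcal{P}f_4\subset\mathbb{P}(\bigwedge^2V_8)$ (in 24 or 25 variables) or of $\mathcal{P}f_6\subset\mathbb{P}(\bigwedge^2V_{12})$ (in 40 to 63 variables), the condition $\dim J_{f_i}^d=(n_i+1)^2$ is equivalent to the injectivity of the multiplication map $S_i^1\otimes P_i\to S_i^d$, i.e., to the absence of linear syzygies among the partials of $f_i$. This is a Zariski-open condition on polynomials, hence \emph{a fortiori} an open condition on the Grassmannian parameterizing the linear sections. Since this Grassmannian is irreducible, it suffices to exhibit a single linear section of the prescribed dimension where the condition holds, and the generic linear section will then satisfy the conclusion. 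I would produce such a witness by specializing the ambient $\mathbb{P}$ to a conveniently chosen coordinate subspace of $\mathbb{P}(\bigwedge^2 V_{2m})$ --- for instance, one selecting an almost block-diagonal pattern among the matrix entries --- so that the restricted polynomial $Pf|_{\mathbb{P}}$ can be described as a combination of smaller Pfaffians and monomials whose partials are amenable to direct calculation via the classical sub-Pfaffian (Laplace-type) expansion formulas.

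The main obstacle is precisely step 2. In the specified dimension ranges the singular locus of the Pfaffian, which has codimension only $6$ in the ambient projective space of $\bigwedge^2V_8$ or $\bigwedge^2V_{12}$, is met by the linear subspace $\mathbb{P}$ in positive dimension; hence the $f_i$ are genuinely singular and one cannot invoke smoothness to run a Koszul-regularity argument. One must instead rely on the combinatorial structure of Pfaffians --- the fact that the natural identities among sub-Pfaffians are quadratic (Plücker-type) rather than linear --- to rule out linear syzygies on the partials of a well-chosen linear section, and then deduce the result for generic linear sections by the semi-continuity just described.
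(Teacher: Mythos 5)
Your Step 1 is correct and is essentially the reduction the paper makes (the paper dismisses it with ``it is immediate to check''); the multidegree decomposition and the identity $(n+1)^2=\sum_i(n_i+1)^2+\sum_{i\neq i'}(n_i+1)(n_{i'}+1)$ do reduce the claim to the individual Pfaffian sections, granted that each $f_i$ has linearly independent partials. The problem is Step 2, which is the actual content of the lemma and which you do not prove: you correctly observe that ``no linear syzygies among the partials'' is open on the irreducible family of linear sections, so a single witness would suffice, but the witness is never produced. ``I would produce such a witness by specializing the ambient $\mathbb{P}$ to a conveniently chosen coordinate subspace\dots so that the partials are amenable to direct calculation'' is a plan, not an argument, and you yourself flag it as the main obstacle. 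It is also a risky plan: coordinate (block-diagonal) sections of the Pfaffian are exactly the ones with extra symmetry, and one would have to check both that the degenerate section still has $n_i+1$ linearly independent partials and that no linear syzygy appears --- a nontrivial combinatorial verification with sub-Pfaffian identities that is not sketched in any checkable form. As it stands the proof has a hole precisely where the lemma is hard.

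For comparison, the paper avoids the witness-hunting entirely and argues directly for the \emph{general} linear section, via the translation: a linear syzygy $\sum_j a_j\,\partial f_i/\partial X_j=0$ with $a_j\in S^1$ is a linear vector field killing $f_i$, i.e.\ an infinitesimal automorphism of $X_{f_i}\subset\mathbb{P}^{n_i}$ (after quotienting by the Euler field). One then shows the general linear section $X_l$ of $\mathcal{P}f_{2k}$ of dimension $>\dim G(2,V_{2k})$ has no infinitesimal automorphisms: the resolution of $\mathcal{P}f_{2k}$ sending a degenerate form to its kernel gives a dominant morphism $\widetilde{X}_l\to G(2,V_{2k})$ with positive-dimensional connected fibers, forcing ${\rm Aut}(X_l)$ to sit inside the subgroup of $PGL(V_{2k})$ preserving the defining subspace $W\subset\bigwedge^2V_{2k}^*$, which is trivial once $\dim W\geq 3$ and $W$ is general. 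If you want to complete your route instead, you must either carry out the explicit sub-Pfaffian computation for a concrete section, or replace Step 2 by an argument of this geometric type.
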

\begin{proof}   One has $f=\sum_j f_{j}$, where each $f_{j}$ involves variables $X_{j,1},\ldots, X^{j,i_j}$. It is immediate to check that the statement for each $f_{j}$ implies the statement for $f$. Turning to the $f_{j}$, they are either general linear sections  of the  quartic Pfaffian hypersurface
in $\mathbb{P}^{27}$
by a  $\mathbb{P}^n$, $n=23$ or $24$.
Let us show that each of them has no infinitesimal automorphism. The automorphism group of the general
  Pfaffian hypersurface
$\mathcal{P}f_k\subset \mathbb{P}(\bigwedge^2V_{2k})$
 is the  group ${\rm {P}GL}(2k)$. We claim that the  automorphism group of a general linear section of dimension
 $>2(2k-2)={\rm dim}\,G(2,V_{2k})$ is also contained in   ${\rm {P}GL}(2k)$. This follows from the fact that after blowing-up   $\mathcal{P}f_{k}$ along its singular
 locus, which parameterizes forms of rank $<2k-2$,  we get a dominant morphism $\widetilde{\mathcal{P}f_{k}}\rightarrow G(2,V_{2k})$, which to a degenerate form associates its kernel.  If we consider a general linear section $X_l$ of $\mathcal{P}f_{2k}$ of dimension $>{\rm dim}\,G(2,V_{2k})$ defined by
 a $r$-dimensional vector  subspace  $W\subset \bigwedge^{2}V_{2k}^*$, the same remains true and
 we get a morphism $\widetilde{X}_l\rightarrow G(2,V_{2k})$ which is dominant with connected  fiber of positive dimension. Thus
 the automorphism group of $X_l$ has to act on $G(2,V_{2k})$ and it has to identify with the group of automorphisms of
 $G(2,V_{2k})$, or automorphisms of $\mathbb{P}(V_{2k})$ preserving the space $W\subset \bigwedge^{2}V_{2k}^*$. It is easy to check that this space is zero once $r\geq3$.
 Coming back to our situation where $k=d=4$, our choices of $r$ are  $r=3$ or $r=4$ for $k=4$. In all cases, the variety $X_l$ has dimension $>2(2k-2)$ so the analysis above applies.
\end{proof}
We now prove Proposition \ref{procarre46}  for $f$ as above.
\begin{lemm}\label{lepour4} Let $f=\sum_jf_j$ be a polynomial of degree $4$ in $n+1$ variables as constructed above. Then
if $T\subset S^2$ is an algebraic subvariety of dimension $\geq n+1$ such that
$$AB\in \mathcal{S}q^4_f\subset R^4_f$$ for any $A,\,B\in  T$, $T= \mathcal{S}q^2$.
\end{lemm}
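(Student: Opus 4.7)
The plan is to exploit the tensor-product decomposition of the Jacobian ring induced by $f=\sum_j f_j(X_{j,1},\ldots,X_{j,i_j})$ being a sum in disjoint sets of variables. Since the $f_j$'s involve disjoint variables, $J_f=\sum_j J_{f_j}\cdot S$ and hence $R_f^*\cong \bigotimes_j R_{f_j}^*$ as graded rings; in particular
\begin{eqnarray*}
R_f^2=S^2=\bigoplus_j S_j^2\oplus\bigoplus_{j<k}S_j^1\otimes S_k^1,\qquad R_f^4=\bigoplus_{\sum k_j=4}\bigotimes_j R_{f_j}^{k_j}.
\end{eqnarray*}
Any $A\in S^2$ therefore has a ``block decomposition'' $A=\sum_j A_j+\sum_{j<k}A_{jk}$, and $\mathcal{S}q^2$ is cut out by the compatibility $A_{jk}^2=4A_jA_k$ together with each $A_j$ being a rank-$1$ quadric in $S_j^2$.

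First I would reduce to the case where $T$ is irreducible (treating components separately) and pick a generic smooth point $A_0\in T$. Since $T$ is a cone of dimension $\geq n+1=\dim \mathcal{S}q^2$ and $\mathcal{S}q^2$ is irreducible, it suffices to show that $A_0\in\mathcal{S}q^2$. Next, pick a generic $B_0\in T$ and write $A_0B_0\equiv E_0^2\pmod{J_f^4}$ for some $E_0\in R_f^2$. The key infinitesimal consequence of the hypothesis is obtained by differentiating the relation $A_0B\equiv E_B^2\pmod{J_f^4}$ along $B\in T$: as the tangent space to $\mathcal{S}q^4_f$ at a generic point $E_0^2$ is $E_0\cdot R_f^2$, we get
\begin{eqnarray*}
A_0\cdot T_{B_0}T\;\subset\;E_0\cdot R_f^2\quad\text{in }R_f^4,
\end{eqnarray*}
an inclusion between an $(n+1)$-dimensional image and a subspace of dimension $\leq\dim R_f^2=\binom{n+2}{2}$.

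Now I would translate this tangent constraint into block-wise conditions using $R_f^4=\bigoplus\bigotimes_j R_{f_j}^{k_j}$. The decomposition interacts rigidly with multiplication: the $\bigotimes_j R_{f_j}^{k_j}$ component of a product $A\cdot B'$ depends only on the block-components of $A$ and $B'$ of matching multi-degrees. Using the specific Pfaffian structure of each $R_{f_j}^*$ (in particular, the shape of the multiplication pairings coming from the $PGl(2k)$-equivariance, together with Lemma~\ref{ledimpourJd} which ensures $J_{f_j}^4$ has the expected dimension), I would show that the above constraint, applied with $B_0$ varying over $T$, forces each block-component $A_{0,j}$ of $A_0$ to be a rank-$1$ square $a_{0,j}^2$ in $S_j^2$, and forces each cross-term $A_{0,jk}$ to equal the polarization $2a_{0,j}a_{0,k}$. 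Hence $A_0=(\sum_j a_{0,j})^2\in\mathcal{S}q^2$, and by genericity and dimension count $T=\mathcal{S}q^2$.

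The main obstacle is the block-by-block dimension count in the last paragraph: for a single Pfaffian linear section the inclusion of tangent spaces is not restrictive enough to force $A_0$ into $\mathcal{S}q^2$ (the Pfaffian geometry of $R_{f_j}^*$ permits exotic directions), so the argument requires enough summands $f_j$ that the ``diagonal'' compatibility across blocks becomes the only option. The explicit threshold $n\geq 599$ reflects exactly how many blocks of size $24$ or $25$ are needed for the dimension inequalities to tip decisively in favour of $T=\mathcal{S}q^2$; executing this count in detail, using the known structure of the Pfaffian Jacobian ring and the behaviour of multiplication by a generic quadric on $R_{f_j}^{2d-1}$ provided by Lemma~\ref{claimpourplustardaussi}, is the technical heart of the proof.
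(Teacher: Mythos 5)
Your proposal correctly sets up the tensor decomposition $R_f^*\cong\bigotimes_j R_{f_j}^*$ and the first-order consequence $A_0\cdot T_{B_0}T\subset E_0\cdot R_f^2$ of the hypothesis, but it stops exactly where the proof has to begin: the deduction that this forces each block of $A_0$ to be a rank-one square is deferred to an unexecuted ``block-by-block dimension count,'' and, as you yourself observe, the raw inclusion of an $(n+1)$-dimensional space into a subspace of the $\binom{n+2}{2}$-dimensional $R_f^4$-image of $E_0$ carries essentially no information on its own. Nothing in the proposal identifies which property of the Pfaffian sections would make that count close, and the feature you invoke ($PGl$-equivariance of the multiplication pairings) is not the one the argument needs. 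This is a genuine gap, not a routine verification left to the reader.

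The paper's proof runs on an entirely different, global mechanism that your route misses. The hypersurface $X_f$ is singular along the join $Z_f=Z_1*\cdots*Z_l$ of the singular loci of the Pfaffian sections, so $J_f\subset I_{Z_f}$ and the condition $AB\in\mathcal{S}q^4_f$ restricts to an honest equality $AB_{\mid Z_f}=M^2_{\mid Z_f}$ of sections of $\mathcal{O}_{Z_f}(4)$. Hence the moving part of ${\rm div}(A_{\mid Z_f})$ occurs with multiplicity two, ${\rm div}(A_{\mid Z_f})=D_0+2D_A$; Claim \ref{claim1pour4} ($Z_f$ lies on no quadric) forces the family $\{D_A\}$ to have dimension $\geq n$, and Claim \ref{claim2pour4}, proved by computing the effective cone of the desingularization $\widetilde{Z}_j$ (Picard rank two, extremal rays $l_j$ and $2h_j-l_j$) and then an induction on joins, shows that the only such family is $|\mathcal{O}_{Z_f}(1)|$, whence $T\subset\mathcal{S}q^2$. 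In particular the Pfaffian is chosen for the rank-two Picard group and computable effective cone of $\widetilde{Z}$, not for any feature of its multiplication table; and the threshold $n\geq 599$ only reflects that every $n+1\geq 600$ is a sum of integers in $\{24,25\}$ --- a single block already works when $n+1\in\{24,25\}$, contrary to your suggestion that one needs many blocks before the inequalities ``tip.'' So both the missing step and the heuristics offered for why it should close point away from the actual argument.
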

\begin{proof}
Let as above $f=\sum_l f_{j}$. The singular locus $Z_f$ of $X_f=V(f)$ is the join of the singular loci
$Z_{j}$ of $V(f_j)$ in $\mathbb{P}^{i_j}$. This means that, introducing the natural rational projection map
$\pi:\mathbb{P}^{\sum_j i_j-1}\dashrightarrow\prod_l\mathbb{P}^{i_j-1}$, one has  $Z_f=\pi^{-1}(\prod_jZ_{j})$.
\begin{claim}\label{claim1pour4} The varieties $Z_{f}$ are not contained in any quadric.
\end{claim}
\begin{proof} Consider first the case of the Pfaffian linear sections $Z_j$. The claim follows in this case  because they are general linear sections
of the singular locus $Z$ of the quartic universal Pfaffian $\mathcal{P}f_4$ in $\mathbb{P}(\bigwedge^2 V_8)$, which is defined by the equations
$\omega^3=0$, that is, by cubics, and is not contained in any quadric. The last point can be seen by looking at the  singular locus of $Z$, which consists of forms of rank $2$, that is the Grassmannian $G(2,V_8^*)$. Along this locus, the Zariski tangent space of $Z$
is the full Zariski tangent space of $\mathbb{P}(\bigwedge^2V_8)$. A quadric containing $Z$ should thus be singular
along ${\rm Sing}\,Z$. But ${\rm Sing}\,Z=G(2,V_8^*)$ is not contained in any proper linear subspace of $\mathbb{P}(\bigwedge^2 V_8)$. It follows that
$Z$ is not contained in any quadric. It remains to conclude that the same statement is true for the general linear section $\mathbb{P}^{i_l-1}\cap \mathcal{P}f_4$, with $i_l=24,\,25$. Its  singular locus $Z_l$ is the general linear
section  $\mathbb{P}^{i_l-1}\cap Z$, and we show inductively that any quadric containing $Z_l$ is the restriction of a quadric containing $Z$. This statement only needs that $Z_l$ is non-empty (it has dimension $\geq 18$ in our case)
and that all the successive linear sections $\mathbb{P}^{j}\cap Z$, with $j\geq i_l$, are linearly normal in $\mathbb{P}^{j}$, which is not hard to prove. Finally we have
to show that the same is true for a general $f=\sum f_j$. As already mentioned, $Z_f$ is then a join
$Z_1*\ldots *Z_l$ and a join of varieties not contained in any quadric is not contained in any quadric.
\end{proof}

 We also prove the following
\begin{claim}\label{claim2pour4} (a) The restriction map $S^1\rightarrow H^0(Z_f,\mathcal{O}_{Z_f}(1))$ is an isomorphism.

 (b) The only
$n$-dimensional family $\{D_A\}$ of divisors   on $Z_f$ such that for  some fixed effective divisor $D_0$,  $$2D_A+ D_0\in |\mathcal{O}_{Z_f}(2)|$$  is the family of hyperplane sections of $Z_f$.
\end{claim}
\begin{proof}
 We know  that  $Z_f$ is the join of the $Z_j\subset \mathbb{P}^{i_j}$, where each $Z_j$ is a smooth linear section  of the singular locus $Z$ of  $\mathcal{P}f_4$ by either a $\mathbb{P}^{24}$ or a  $\mathbb{P}^{23}$. Let us first  conclude when $f$ is one of the $f_j$, so $Z_f$ is   one of the $Z_j$.
  We observe that $Z\subset\mathbb{P}(\bigwedge^2V_8)$ is the set of  $2$-forms of rank $\leq 4$ (the generic element of $Z$ being  of rank exactly $4$), and  has a natural birational model $\widetilde{Z}\rightarrow Z$, where
 $$\widetilde{Z}\subset G(4,V_8^*)\times \mathbb{P}(\bigwedge^2V_8),\,\,\widetilde{Z}=\{([W_4],\omega),\,W_4\subset {\rm Ker}\,\omega\}.
 $$
 The statement (a) easily follows from the above description of $\widetilde{Z}$ and the fact that the $Z_j$ are general  linear sections of codimension
 $3$ or $4$ of $Z$. Let us prove (b).
The variety  $\widetilde{Z}$ is smooth and, being a projective bundle  fibration over $G(4,V_8^*)$, has Picard rank $2$.
Its effective cone is very easy to compute: indeed, the line bundle
$l$ which is pulled-back from the Pl\"{u}cker line bundle on the  Grassmannian via the first projection $pr_1$ is clearly one extremal ray of the effective cone since the corresponding
morphism has positive dimensional fibers. There is a second extremal ray of the effective cone, which is the
class of the divisor $D$ contracted by the birational map $\widetilde{Z}\rightarrow Z$ (induced by the second projection  $pr_2$). One easily computes that this class is $2h-l$, where $h$ is the pull-back of hyperplane class on $ \mathbb{P}(\bigwedge^2V_8)$ by $pr_2$.
We observe that the fibers of $ pr_1$ are of dimension $5$, so that, when we take a general linear section of $Z$ by a codimension
$3$ or $4$ projective subspace,  getting the singular locus
$Z_j$ of $X_j$, all the properties above remain satisfied, and thus
${\rm Pic}\,\widetilde{Z}_j=\mathbb{Z} h_j+\mathbb{Z}l_j$, with effective cone generated by $l_j$ and $2h_j-l_j$.
We now finish the argument for $Z_j$: we lift our data $\{D_A\},\,D_0$ to $\widetilde{Z}_j$. We then have
$$2h_j=\widetilde{D}_0+2\widetilde{D}_A$$
in ${\rm Pic}\,\widetilde{Z}_j$,
with $\widetilde{D}_0$ effective and ${\rm dim}\,|\widetilde{D}_A|\geq n$, where $n$ is ${\rm dim}\,|h_{\mid Z_j}|$ by (a).
Let us write
$$\widetilde{D}_0=\alpha h_j+\beta l_j=  \frac{\alpha}{2} (2h_j-l_j)+(\beta+ \frac{\alpha}{2})l_j\,\,{\rm in}\,\,{\rm Pic}\,\widetilde{Z}_j,$$
with $\alpha,\,\beta \in  2\mathbb{Z}$.
Then the analysis above shows that
$$\alpha\geq 0,\,\beta+ \frac{\alpha}{2}\geq 0.$$
As $2h_j-\widetilde{D}_0$ is effective, we also have
$$2-\alpha\geq0,\,-\beta+1- \frac{\alpha}{2}\geq0.$$
As $\alpha$ is even, we only get the possibilities $\alpha=0,\,2$. If $\alpha=2$, we get $\beta=0$ and thus $2h_j-\widetilde{D}_0=0$, contradicting the assumption ${\rm dim}\,|2h-\widetilde{D}_0|\geq n$. If $\alpha=0$, we get from the  inequalities above  $\beta=0$ since $\beta$ has to be even, and $2h=2\widetilde{D}_A$, which proves
 statement (b)  (using the fact that
${\rm Pic}\,Z_j$ has no $2$-torsion and statement (a)).

We now have to prove the same result for the join
$Z_f=Z_1*Z_2*\ldots* Z_l$, which is done inductively on the number $l$, assuming, as this is satisfied in our situation, that the $Z_i$'s are simply connected. This way we are reduced to consider
only the join $Z_1*Z_2\subset \mathbb{P}^n$,  with $n=n_1+n_2+1$, of two linearly normal varieties
$Z_1\subset \mathbb{P}^{n_1},\,Z_2\subset \mathbb{P}^{n_2}$,  which satisfy the
 properties (a) and (b).
We observe  that $Z_1*Z_2$ is dominated by a $\mathbb{P}^1$-bundle over $Z_1\times Z_2$, namely
\begin{eqnarray}
\label{eqpourpinouveau} \widetilde{Z_1*Z_2}:=\mathbb{P}(\mathcal{O}_{Z_1}(1)\oplus \mathcal{O}_{Z_2}(1))\stackrel{\pi}{\rightarrow}Z_1\times Z_2,
\end{eqnarray}
 the two sections being contracted to $Z_1$, resp. $Z_2$, by the natural morphism to $Z_1*Z_2\subset \mathbb{P}^n$.
 The description (\ref{eqpourpinouveau}) of the join immediately proves (a) for $Z_1*Z_2$ once we have it for $Z_1$ and $Z_2$.
 We now turn to (b).
Let $h=\mathcal{O}_{\mathbb{P}(\mathcal{O}_{Z_1}(1)\oplus \mathcal{O}_{Z_2}(1)})$ on $\widetilde{Z_1*Z_2}$ and let
$D_0$ be a fixed effective divisor and $\{D_A\}$ be a mobile family of divisors on
$\widetilde{Z_1*Z_2}$ such that
\begin{eqnarray}\label{eqpourDzeroDA} D_0+2D_A=2h,
\\
\label{eqpourdimDA}
{\rm dim}\,\{ D_A\}\geq n.
\end{eqnarray}
 Then either $D_0$ or $D_A$ is vertical for $\pi$. Indeed, they  both restrict otherwise to a divisor of degree $\geq1$  on the fibers of $\pi$, contradicting (\ref{eqpourDzeroDA}).  Assume $D_0$ is vertical for $\pi$, that is, $D_0=\pi^{-1}(D'_0)$.
The equality
$2h-D'_0=2D_A$ says that $D'_0=2D''_0$ as divisors on $Z_1\times Z_2$ and,  as  $Z_1$ and $Z_2$ are simply connected, $D''_0\in|pr_1^*D''_{0,1}+pr_2^*D''_{0,2}|$ and both $2D''_{0,1}, \,2D''_{0,2}$ are effective.
 The divisors $D''_{0,i}$ on $Z_i$
have the property that the linear system
$|h-pr_1^*D''_{0,1}-pr_2^*D''_{0,2}|$ on $\mathbb{P}(\mathcal{O}_{Z_1}(1)\oplus \mathcal{O}_{Z_2}(1))$ has dimension
$\geq n$, which says that
$${\rm dim}\,|\mathcal{O}_{Z_1}(1)(-D''_{0,1})|+{\rm dim}\,|\mathcal{O}_{Z_2}(1)(-D''_{0,2})|\geq n_1+n_2.$$
As $2D''_{0,1}$ is effective on $Z_1$ and $2D''_{0,2}$ is effective on $Z_2$, we conclude that $D''_{0,i}=0$ and that
the $D_A$'s belong to $|\mathcal{O}_{Z_1*Z_2}(1)|$, so (b) is proved in this case.

In the case where $D_0$ is not vertical, then restricting again to the fibers of $\pi$, $D_0=2h-D'_0$ where $D'_0$ is effective and   comes from
$Z_1\times Z_2$, and $D_A$ is vertical, $D_A=\pi^{-1}(D'_A)$. Hence we have again
$D'_0=2D''_0$,  and ${\rm dim}\,|h-D'_0-D'_A|\geq n$, so the proof concludes as before that $D'_0=0$ and $D_A=0$ which contradicts
(\ref{eqpourdimDA}).
\end{proof}
We now conclude the proof of Lemma \ref{lepour4}. We have $J_f\subset I_{Z_f}$, since $Z_f$ is contained in ${\rm Sing}\,X_f$. Let
$T\subset S^2$ be a closed algebraic subset satisfying the assumptions  of  Lemma \ref{lepour4} for $f$. Then
for any $A,\,B\in T$,
$AB_{\mid Z_f}=M^2_{\mid Z_f}$, for some $M\in S^2$.
This implies that the moving  part of
${\rm div}\,A_{\mid Z_f}$ appears with multiplicity $2$.  Hence ${\rm div}\,A_{\mid Z_f}=D_0+2D_A$. We now use  Claim \ref{claim1pour4}
which implies that
the family of divisors  ${\rm div}\,A_{\mid Z_f}$ is of dimension $\geq n$, hence also the family
$\{D_A\}$ of divisors.
 We then   conclude from Claim \ref{claim2pour4}  that $T\subset \mathcal{S}q^2$.
\end{proof}

In order to conclude the proof of  Proposition \ref{procarre46}, it suffices now to make Lemma \ref{lepour4}  more precise by analyzing the schematic structure of a closed algebraic  subset $T\subset S^2$
satisfying the assumptions of this  lemma. We first  observe  the following:
\begin{lemm}\label{lecalctan}   Let $f$ be as in Lemma \ref{lepour4}  and let
$A,\,B\in S^1$ be general. Let $M:=AB$ and consider the subspace  $MS^2=MR^2_f\subset R^4_f$. Then
\begin{eqnarray}
\label{eqpourstrsch} [ MS^2:A^2]= BS^1 \subset S^2,
\end{eqnarray}

where as usual, the notation $ [ MS^2:A^2]$ is used for $\{S\in S^2,\,SA^2\in  \langle M\rangle\}$.

\end{lemm}
The lemma is obvious, using restriction to $Z_f$ and  using Claim \ref{claim2pour4}.

We now conclude the proof of  Proposition \ref{procarre46}.  We observe that $  MS^2$ is the tangent space to
$\mathcal{S}q^4$ at $M^2$, while
$BS^1\subset S^2$ is the tangent space to  $\mathcal{S}q^2$ at $B^2$.
Equation (\ref{eqpourstrsch})  thus says that a space $T\subset S^2$  satisfying the assumptions
of Lemma \ref{lepour4}  must be generically the {\it reduced} $\mathcal{S}q^2$. The conclusion of the proof then follows by a
specialization and  cycle-theoretic argument, using the fact that these  sets $T$ above are cones, hence come from closed algebraic subsets $\mathbb{P}(T)$ of $ \mathbb{P}(S^2)$.
\end{proof}
\section{Schiffer variations and Jacobian ideals \label{secschiffer}}
\begin{Defi}\label{defischi} A Schiffer variation of a homogeneous polynomial $f$ of degree $d$ in $n+1$ variables
$X_0,\ldots, \,X_n$   is a $1$-parameter family  $f+t x^d,\,t\in\mathbb{C}$, where
$x\in S^1$ is a linear form of the variables $X_0,\ldots, \,X_n$.
\end{Defi}
In the definition above, we are not interested in the linear character of the parameterization,
as this does not make sense anymore after  projection of  this line to the moduli space. We should
thus consider more generally $1$-parameter families of polynomials supported (up to the action of ${\rm GL}(n+1)$) on a
line as above.
We can also speak of  finite order Schiffer variations, which consist in looking at a finite order arc in an affine  line as above passing through  $f$.
Observe that if $g=f+ x^d$, then for any $u\in H^0(\mathbb{P}^n,T_{\mathbb{P}^n}(-1))=H^0(\mathbb{P}^n,
\mathcal{O}_{\mathbb{P}^n}(1))^*$ such that $\partial_u(x)=0$,
one has $\partial_u(f)=\partial_u(g)$. It follows that the Jacobian ideals $J_f$, resp. $J_g$
 generated by the partial derivatives of $f$, resp.  $g$,  satisfy the condition
\begin{eqnarray}\label{eqjacpetit}  {\rm dim}\, \langle J_f^{d-1},\,J_g^{d-1}\rangle \leq n+2.
\end{eqnarray}
It turns out that (\ref{eqjacpetit}) is in most cases a characterization of Schiffer variations, as shows
Proposition \ref{proschi} below.
A well-known result due (in various forms) to Carlson-Griffiths \cite{cagri}, Donagi \cite{donagi} and Mather-Yau \cite{mayau}
says the following:
\begin{prop}\label{theoyau}  Let $f,\,g$ be two homogeneous polynomials in $n+1$ variables, defining smooth hypersurfaces
in $\mathbb{P}^n$. If the Jacobian ideals $J_f^{d-1}$ and $J_g^{d-1}$ coincide, then $f$ and $g$ are in the same orbit
under the group ${\rm PGL}(n+1)$.
\end{prop}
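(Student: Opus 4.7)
The plan is to exhibit $g$ as lying in the $GL(n+1)$-orbit of $f$ by integrating a one-parameter deformation connecting them. Set $V := J_f^{d-1} = J_g^{d-1}\subseteq S^{d-1}$, of dimension $n+1$ since the partials of a smooth hypersurface form a regular sequence, and consider the linear family $f_t := (1-t)f + tg$ for $t\in \mathbb{A}^1$. The partials $\partial f_t/\partial X_i = (1-t)\partial f/\partial X_i + t\,\partial g/\partial X_i$ lie in $V$ for every $t$, so $J_{f_t}^{d-1}\subseteq V$, with equality on a Zariski-open $U\subseteq \mathbb{A}^1$ that is cofinite and contains $\{0,1\}$; shrinking $U$ further so that $V(f_t)$ is also smooth leaves a dense open set still containing $\{0,1\}$.

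The crucial observation is that the tangent direction of the family is everywhere tangent to the $GL(n+1)$-orbit through $f_t$. On one hand, Euler's identity gives $f = \tfrac{1}{d}\sum_i X_i\,\partial f/\partial X_i \in S^1\cdot V$, and similarly $g \in S^1\cdot V$, so $\dot f_t = g - f \in S^1\cdot V$. On the other hand, for $t\in U$ the differential at the identity of the orbit map $\phi \mapsto \phi\cdot f_t$ sends $B\in\mathfrak{gl}(n+1)$ to $\sum_{i,j}B_{ij}X_j\partial f_t/\partial X_i \in S^1\cdot J_{f_t}^{d-1} = S^1\cdot V$, and this map has trivial kernel because the Koszul complex on the regular sequence $(\partial f_t/\partial X_i)$ is exact, forcing the natural map $S^1\otimes V \to S^d$ to be injective for $d\geq 3$ (the case $d=2$ being elementary). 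Thus $T_{f_t}(GL(n+1)\cdot f_t) = S^1\cdot V$, and $\dot f_t$ sits in it.

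I would then integrate this tangency into an ODE on $GL(n+1)$: seek $\phi(t)$ with $\phi(0) = \mathrm{id}$ and $\phi(t)\cdot f = f_t$, which upon differentiation becomes $\dot\phi(t) = \Psi(\phi(t))$ for a rational vector field $\Psi$ obtained by inverting the orbit-map differential and pulling back $g-f$. This $\Psi$ is regular wherever $\phi\cdot f$ is smooth with $J_{\phi\cdot f}^{d-1} = V$. Local existence for holomorphic ODEs produces $\phi(t)$ near $t=0$; analytic continuation along a path in $\mathbb{C}$ from $0$ to $1$ avoiding the finite complement of $U$ (possible since $\mathbb{C}$ minus finitely many points is path-connected) yields $\phi(1)\in GL(n+1)$ with $\phi(1)\cdot f = g$. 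Hence $f$ and $g$ lie in the same $GL(n+1)$-orbit, and therefore in the same $PGL(n+1)$-orbit.

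The main technical obstacle is guaranteeing that the ODE solution does not leave $GL(n+1)$ along the chosen path: a priori $\phi(t)$ could become non-invertible or escape to infinity at some intermediate $t$. I would handle this by packaging the pointwise ODE argument globally: show that the orbit $\mathcal{O}_f := GL(n+1)\cdot f$ is open in the locally closed variety $\Sigma_V := \{h\in S^d : J_h^{d-1} = V,\ V(h)\text{ smooth}\}$ by comparing dimensions via the tangent-space computation above (the stabilizer of a smooth hypersurface of degree $\geq 3$ being finite), and then invoking that $\{f_t\}_{t\in U}$ is a connected curve in $\Sigma_V$ linking $f$ to $g$, which forces $g\in \mathcal{O}_f$ without ever needing to continue a possibly singular ODE past its bad locus.
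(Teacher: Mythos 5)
The paper does not actually prove this proposition; it cites Donagi, whose argument is precisely the one you outline: connect $f$ and $g$ by the segment $f_t=(1-t)f+tg$, use Euler's relation to see that $\dot f_t=g-f$ lies in $S^1\cdot V$ where $V=J_f^{d-1}=J_g^{d-1}$, identify $S^1\cdot V$ with the tangent space to the $GL(n+1)$-orbit of $f_t$ for all $t$ in a cofinite set $U\ni\{0,1\}$, and conclude that the segment lies in a single orbit. Your computations here are all correct, including the injectivity of $S^1\otimes V\to S^d$ for $d\geq 3$ from the absence of Koszul relations in degree $d<2(d-1)$, which simultaneously gives that the image of the orbit-map differential is exactly $S^1\cdot V$, that the stabilizer is finite, and that the orbit dimension is the constant $(n+1)^2$ along the path.

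The one step that is not right as written is the repair proposed in your last paragraph. First, $\mathcal{O}_f=GL(n+1)\cdot f$ is not contained in $\Sigma_V$: for $\phi\in GL(n+1)$ one has $J_{\phi\cdot f}^{d-1}=\phi^*V$, which differs from $V$ in general, so at best one can hope that $\mathcal{O}_f\cap\Sigma_V$ is open in $\Sigma_V$. Second, and more seriously, ``comparing dimensions via the tangent-space computation'' only yields an inclusion of tangent spaces at individual points, and tangency at a point never forces local containment of one variety in another (a parabola is tangent to its tangent line without lying in it). What actually closes the gap is Mather's lemma: if a connected submanifold $N$ satisfies $T_xN\subseteq T_x(Gx)$ for every $x\in N$ and $\dim Gx$ is constant on $N$, then $N$ lies in a single orbit. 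You have already verified both hypotheses, with $N=\{f_t\}_{t\in U}$. The proof of that lemma is the honest version of your ODE: by constancy of the orbit dimension one can write $\dot f_t=\sum_i c_i(t)\,\sigma_{f_t}(B_i)$ for finitely many fixed $B_i\in\mathfrak{gl}(n+1)$, with $c_i$ holomorphic along a path in $U$, where $\sigma_{f_t}$ is the orbit-map differential; one then solves the \emph{linear} ODE $\dot\phi=\bigl(\sum_i c_i(t)B_i\bigr)\phi$, whose solution exists along the whole path and satisfies $\det\phi(t)=\exp\bigl(\int \mathrm{tr}(\sum_i c_iB_i)\bigr)\neq 0$ --- so the escape from $GL(n+1)$ you worry about does not occur. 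The remaining global input, closedness of the relevant orbit intersection, follows from your own observation that every $f_t$ with $t\in U$ has an orbit of the full dimension $(n+1)^2$, while $\overline{\mathcal{O}_f}\setminus\mathcal{O}_f$ is a union of strictly smaller orbits. With that lemma quoted (or proved as above), your argument is complete and agrees with the cited proof.
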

A nice proof of this statement is given in \cite{donagi}. The example of the Fermat equation $f=\sum_iX_i^d$ and its variations $g=\sum_i \alpha_iX_i^d$ shows that
one does not always have $f= \mu g$ for some coefficient $\mu$,  under the assumptions of Proposition  \ref{theoyau}. The Mather-Yau theorem is
 the following variant which is more precise but  works only   for $d$ large enough and $f$ generic.
\begin{prop}\label{theoyauvariant}  Let $f,\,g$ be two homogeneous polynomials of degree $d$ in $n+1$ variables, defining smooth hypersurfaces
in $\mathbb{P}^n$. Assume $d\geq 4,\,n\geq 4$. If $f$ is generic and the Jacobian ideals $J_f$ and $J_g$ coincide, then $f=\mu g$ for some coefficient $\mu$.
\end{prop}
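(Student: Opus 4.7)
The plan is to use a pencil argument combined with Proposition \ref{theoyau}. Given $g$ with $J_g = J_f$, consider the pencil $f_t := (1-t)f + tg$ for $t \in \mathbb{A}^1$. Since $\partial/\partial X_i$ is $\mathbb{C}$-linear, each partial $\partial f_t/\partial X_i$ lies in $J_f^{d-1} = J_g^{d-1}$, so $J_{f_t}^{d-1} \subset J_f^{d-1}$. At every $t$ for which $f_t$ defines a smooth hypersurface, the partials of $f_t$ are linearly independent over $\mathbb{C}$ (any relation $\sum c_i \partial f_t/\partial X_i = 0$ would mean $f_t$ is independent of the direction $\sum c_i X_i$, forcing $X_{f_t}$ to be singular), so $\dim J_{f_t}^{d-1} = n+1 = \dim J_f^{d-1}$ and the inclusion is an equality. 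Thus $J_{f_t} = J_f$ on the Zariski-open subset $U \subset \mathbb{A}^1$ of smooth parameters.

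Applying Proposition \ref{theoyau} to each smooth $f_t$ provides $(\lambda_t, \sigma_t) \in \mathbb{C}^* \times PGL(n+1)$ with $f_t = \lambda_t \sigma_t^* f$; assuming $\mathrm{Aut}(X_f)$ is trivial (which holds for generic $f$ when $d \geq 3,\,n \geq 2$), this pair is uniquely determined by $f_t$, so $t \mapsto (\lambda_t, \sigma_t)$ is a morphism $U \to \mathbb{C}^* \times PGL(n+1)$ with $(\lambda_0, \sigma_0) = (1, \mathrm{id})$. The identity $J_{f_t} = J_f$ translates to $\sigma_t \in G_f := \mathrm{Stab}_{PGL(n+1)}(J_f^{d-1})$. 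If $G_f$ is zero-dimensional for generic $f$, then the connected image of $U$ in $G_f$ containing the identity is a single point, so $\sigma_t = \mathrm{id}$ for all $t \in U$, hence $f_t = \lambda_t f$ and in particular $g = f_1 = \lambda_1 f$, as desired.

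The remaining and main obstacle is to show that the Lie algebra of $G_f$ vanishes in $\mathfrak{pgl}(n+1)$ for generic $f$. Infinitesimally, $W \in \mathfrak{gl}(n+1)$ lies in $\mathfrak{stab}(J_f^{d-1})$ iff $V_W(\partial f/\partial X_k) = \sum_{i,j} W_{ij} X_j f_{ik}$ belongs to $J_f^{d-1}$ for every $k$, equivalently $J_{V_W f} \subset J_f$ where $V_W f = \sum_{i,j} W_{ij} X_j \partial f/\partial X_i \in J_f^d$. Scalar matrices $W = \lambda I$ satisfy this by Euler's identity (they give $V_W f = \lambda d f$), and the task is to show that no other $W$ does.

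I would prove this vanishing by specialization, exhibiting a single $f_0$ with $\mathfrak{stab}_{\mathfrak{gl}}(J_{f_0}^{d-1}) = \mathbb{C} \cdot I$ and invoking upper semi-continuity of the stabilizer dimension. A natural candidate is a small generic perturbation of Fermat, $f_0 = \sum_i X_i^d + \epsilon h$ with $h$ a generic degree-$d$ polynomial. At Fermat itself the infinitesimal stabilizer consists of all diagonal $W$, but the first-order constraint contributed by $\epsilon h$ is the vanishing, modulo $\langle X_0^{d-1}, \ldots, X_n^{d-1}\rangle$, of $(1-d) w_k \partial h/\partial X_k + \sum_i w_i X_i \partial^2 h/\partial X_i \partial X_k$ for each $k$; a direct monomial analysis shows that for generic $h$ this forces $w_0 = \cdots = w_n$, i.e. $W \in \mathbb{C} \cdot I$. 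The hypotheses $d \geq 4,\,n \geq 4$ leave ample room for this computation, and upper semi-continuity then extends the conclusion to generic $f$.
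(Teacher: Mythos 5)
Your pencil argument (steps 1--5) is sound: for smooth $f_t=(1-t)f+tg$ the partials are linearly independent, so $J_{f_t}^{d-1}=J_f^{d-1}$ on the Zariski-open $U\ni 0,1$; Proposition \ref{theoyau} puts each $f_t$ in the $PGL(n+1)$-orbit of $f$; and if the stabilizer $G_f$ of $J_f^{d-1}$ in $PGL(n+1)$ is finite, connectedness of $U$ forces $\sigma_t=\mathrm{id}$ and $g=\lambda_1 f$. The gap sits in the one step that carries all the content: the finiteness of $G_f$ for generic $f$. You reduce it to a first-order computation at the Fermat polynomial and then write ``a direct monomial analysis shows\ldots'' and ``upper semi-continuity then extends the conclusion.'' Two things are missing. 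First, the monomial computation is only asserted, and the constraint must in fact be read modulo $\langle X_0^{d-1},\ldots,X_n^{d-1}\rangle + X_k^{d-2}S^1$: the second summand, which you dropped, comes from the term $V_{W_1}(\partial F/\partial X_k)$ contributed by the first-order variation of $W$. (The analysis still goes through using squarefree degree-$d$ monomials of $h$, which exist since $n+1\geq 5\geq d$ fails in general --- one needs monomials with all exponents $\leq d-3$, available for $d\geq4$, $n\geq4$ --- but it has to be checked against this larger space.) Second, and more seriously, upper semi-continuity lets you pass from a \emph{single} $f_0$ with zero-dimensional stabilizer to the generic $f$, but you never exhibit such an $f_0$: what you compute is a first-order obstruction at Fermat, where the stabilizer is positive-dimensional. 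Deducing that $F+\epsilon h$ has finite stabilizer for generic small $\epsilon$ from that obstruction requires a limiting argument (take a curve $s\mapsto(\epsilon(s),[W(s)])$ of non-scalar stabilizer classes, pass to the limit $[W(0)]$, which is a non-scalar diagonal class in $\mathrm{Lie}(G_F)$, and extract your equation at the first order in $s$ at which $h$ enters, after a possible base change $\epsilon=s^m$). This can be made to work, but it is not the routine semicontinuity step your write-up suggests.

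For comparison, the paper's route (carried out in detail for the slightly more general Proposition \ref{proschi}, of which this statement is the special case where (\ref{eqjacpetit}) is replaced by equality of the Jacobian ideals) is much shorter and avoids the group theory entirely. Since $J_f^{d-1}=J_g^{d-1}$ and both have dimension $n+1$, one writes $\partial g/\partial X_i=\partial f/\partial Y_i$ for a second coordinate system $Y=AX$; the symmetry $\partial^2 g/\partial X_i\partial X_j=\partial^2 g/\partial X_j\partial X_i$ then says that $f$ satisfies the constant-coefficient second-order equations $(\partial_{X_i}\partial_{Y_j}-\partial_{X_j}\partial_{Y_i})f=0$, given by elements of rank $\leq4$ in ${\rm Sym}^2(S^1)^*$; Lemma \ref{lenouveau} --- a dimension count, which is exactly where the hypotheses $d\geq4$, $n\geq4$ enter via (\ref{eqsurdn}) --- shows a generic $f$ satisfies no nontrivial such equation, whence $A=\lambda\,\mathrm{Id}$ and $g=\lambda f$. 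Your approach, once completed, proves the same thing, but at the cost of importing Proposition \ref{theoyau} and a separate generic-triviality statement for the symmetries of $J_f^{d-1}$ that is of comparable difficulty to the original problem.
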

Let us  prove a
closely related statement concerning the case where $J_f^{d-1}$ and $J_g^{d-1}$ are not equal but almost equal, that is, satisfy equation (\ref{eqjacpetit}).

\begin{prop}\label{proschi}  Let $d,\,n$ be such that
\begin{eqnarray}
\label{eqsurdn} 4(n-3)+10\leq h^0(\mathbb{P}^n,\mathcal{O}_{\mathbb{P}^n}(d-2)) .
\end{eqnarray}
Then for a generic polynomial $f\in H^0(\mathbb{P}^n,\,\mathcal{O}_{\mathbb{P}^n}(d))$,
the equation (\ref{eqjacpetit}) holds if and only if
 $g$ belongs to  a Schiffer variation of $\lambda f$ for some coefficient $\lambda$.

\end{prop}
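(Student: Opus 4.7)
\emph{Proof plan.} The ``if'' direction is immediate: for $g = \lambda f + t\, x^d$, one has $\partial g/\partial X_i = \lambda\, \partial f/\partial X_i + dt\, x^{d-1}\cdot \partial x/\partial X_i$, so $J_g^{d-1} \subset J_f^{d-1} + \mathbb{C}\cdot x^{d-1}$ has dimension at most $n+2$.

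For the converse, assume the hypothesis with $f$ generic. If $J_f^{d-1} = J_g^{d-1}$, Proposition~\ref{theoyauvariant} yields $g = \mu f$, a trivial Schiffer variation (with $t=0$). Otherwise $\dim(J_f^{d-1}\cap J_g^{d-1}) = n$, and we choose $e \in J_g^{d-1}\setminus J_f^{d-1}$ and write uniquely
\[
\partial g/\partial X_i = \sum_{j} Q_{ij}\, \partial f/\partial X_j + r_i\, e,\qquad i=0,\dots,n,
\]
for a scalar matrix $Q = (Q_{ij})$ and a nonzero scalar vector $r=(r_i)$. Equating the mixed second partials of $g$ and using $f_{ij}=f_{ji}$ yields the key compatibility equation in $\mathrm{Skew}(n+1, S^{d-2})$:
\begin{equation}
QH_f - H_f Q^T = \nabla e \cdot r^T - r \cdot \nabla e^T, \label{eqcompatschiff}
\end{equation}
where $H_f = (\partial^2 f/\partial X_i\partial X_j)$ is the (symmetric) Hessian and $\nabla e=(\partial e/\partial X_i)$ is the gradient column.

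The heart of the argument is to show that, for generic $f$, equation~(\ref{eqcompatschiff}) forces $Q = \lambda I$ for some scalar $\lambda$. Consider the linear map $\phi_f \colon \mathrm{Mat}(n+1,\mathbb{C}) \to \mathrm{Skew}(n+1, S^{d-2})$, $Q\mapsto QH_f - H_f Q^T$. For generic $H_f$, $\ker\phi_f = \mathbb{C}\cdot I$ (by a short linear algebra count), so $\mathrm{Image}(\phi_f)$ has dimension $(n+1)^2-1$. The ``rank-two gradient'' subvariety $V := \{\nabla e\cdot r^T - r\cdot \nabla e^T : e\in S^{d-1},\ r\in\mathbb{C}^{n+1}\}$ is parameterized modulo one-dimensional scaling by $(e,r)$, so $\dim V = \dim S^{d-1} + n$. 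The ambient space has dimension $\binom{n+1}{2}\dim S^{d-2}$, and the expected dimension of $\mathrm{Image}(\phi_f)\cap V$ equals
\[
(n+1)^2 - 1 + \dim S^{d-1} + n - \binom{n+1}{2}\dim S^{d-2},
\]
which is strictly negative under hypothesis~(\ref{eqsurdn}). The main technical obstacle is to upgrade this expected-dimension inequality to an actual transversality statement: one specializes $f$ to a Fermat-type polynomial where the intersection can be verified explicitly to be $\{0\}$, and concludes by semicontinuity that for generic $f$, any $Q$ satisfying~(\ref{eqcompatschiff}) lies in $\ker\phi_f = \mathbb{C}\cdot I$.

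Granting $Q = \lambda I$, equation~(\ref{eqcompatschiff}) becomes $\nabla e\cdot r^T = r\cdot\nabla e^T$, so $\partial_i e\cdot r_k = r_i\cdot\partial_k e$ for all $i,k$. Since $r\neq 0$, after reordering so $r_0\neq 0$ we get $\nabla e = \phi\cdot r$ with $\phi := \partial_0 e/r_0 \in S^{d-2}$. By Euler, $(d-1)\,e = \sum_i X_i\,\partial_i e = \phi\cdot x$ where $x := \sum_i r_i X_i\in S^1$. Commutativity of the second partials of $e$ applied to the identity $\partial_i e = \phi r_i$ forces $\nabla \phi\propto r$ by the same reasoning, and iterating this argument gives by descending induction on degree that $\phi = c'\,x^{d-2}$ and hence $e = c\, x^{d-1}$ for some $c\in\mathbb{C}$. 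Therefore
\[
\partial g/\partial X_i = \lambda\, \partial f/\partial X_i + c\, r_i\, x^{d-1} = \partial_i\bigl(\lambda f + (c/d)\, x^d\bigr),
\]
so $g - \lambda f - (c/d) x^d$ is a homogeneous polynomial of positive degree with zero gradient, hence zero. This exhibits $g = \lambda f + (c/d)\, x^d$ as a Schiffer variation of $\lambda f$.
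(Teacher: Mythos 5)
Your ``if'' direction, the derivation of the compatibility equation $QH_f-H_fQ^T=\nabla e\cdot r^T-r\cdot\nabla e^T$, and the final descent from ``$\nabla e\cdot r^T$ symmetric'' to $e=c\,x^{d-1}$ are all fine, and the overall engine (symmetry of second partials) is the same as in the paper. The gap is in the one step that carries all the weight: forcing $Q=\lambda I$ for generic $f$. Your ``expected dimension'' computation is only a heuristic here, because ${\rm Image}(\phi_f)$ is not a generic linear subspace of ${\rm Skew}(n+1,S^{d-2})$ --- it moves in a constrained family indexed by $f\in S^d$ --- so a negative expected dimension of ${\rm Image}(\phi_f)\cap V$ proves nothing without exhibiting at least one $f$ where the intersection is actually $\{0\}$. (Also, the inequality you assert is not literally a consequence of (\ref{eqsurdn}) in all cases, e.g.\ $n=2$, $d=4$.) Worse, the specialization you propose cannot supply that witness: for the Fermat polynomial $f=\sum_iX_i^d$ the intersection is nonzero. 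Concretely, take $Q=I+\epsilon E_{01}$, $r=(0,1,0,\dots,0)$, $e=d(d-1)X_0X_1^{d-2}$; this integrates to $g=f+t\,X_0X_1^{d-1}$, which satisfies $\langle J_f^{d-1},J_g^{d-1}\rangle=\langle X_0^{d-1},\dots,X_n^{d-1},X_0X_1^{d-2}\rangle$ of dimension $n+2$, yet is not of the form $\lambda f+\mu x^d$. So the Fermat is a genuine exception to the conclusion of the proposition (it satisfies many rank-$2$ second-order equations $\partial^2f/\partial X_i\partial X_j=0$), and semicontinuity from it gives nothing.

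The paper closes exactly this gap by running the parameter count on the space of polynomials rather than on the target of $\phi_f$, and by first eliminating $e$ altogether: since $\dim(J_f^{d-1}\cap J_g^{d-1})\ge n$, one can choose directions so that $\partial g/\partial X_i=\partial f/\partial Y_i$ for $i=0,\dots,n-1$ with no extra term, and the symmetry $\partial^2g/\partial X_i\partial X_j=\partial^2g/\partial X_j\partial X_i$ then yields second-order equations $\partial^2_Uf=0$ where $U=\frac{\partial}{\partial X_i}\frac{\partial}{\partial Y_j}-\frac{\partial}{\partial X_j}\frac{\partial}{\partial Y_i}$ has rank $\le4$ in ${\rm Sym}^2(S^1)^*$. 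The nonzero such $U$ form a variety of dimension $4(n-3)+9$, each imposing $h^0(\mathcal{O}_{\mathbb{P}^n}(d-2))$ independent linear conditions on $f$, so under (\ref{eqsurdn}) the union $\bigcup_UH_U$ has dimension $<\dim S^d$ and a generic $f$ satisfies no such equation; this forces $\partial/\partial Y_i=\lambda\,\partial/\partial X_i$, whence $g-\lambda f$ depends only on $X_n$ and equals $\alpha X_n^d$. If you want to salvage your version, you must either prove directly that for some explicit non-degenerate $f$ (not Fermat-like) one has ${\rm Image}(\phi_f)\cap V=\{0\}$, or convert your count into a bound on the dimension of the incidence variety $\{(f,Q,e,r)\}$ over $S^d$; as written the argument does not establish the statement.
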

Note that (\ref{eqsurdn}) holds if $d\geq 4$ and  $n\geq 4$.
\begin{proof}[Proof of Proposition \ref{proschi}]  As $f$ is generic, its Jacobian ideal
$J_f^{d-1}$ has dimension $n+1$. The equation (\ref{eqsurdn}) can thus be written in the following form, for adequate choices of   linear coordinates
$X_0,\ldots,\,X_n$ and $Y_0,\ldots,\,Y_n$ on $\mathbb{P}^n$
\begin{eqnarray} \label{eqdifffg}
\frac{\partial g}{\partial X_i}=\frac{\partial f}{\partial Y_i}\,\,{\rm for}\,\,i=0,\ldots,\,n-1.
\end{eqnarray}
Let us now use the symmetry of partial derivatives:
$$ \frac{\partial ^2g}{\partial X_i\partial X_j}= \frac{\partial ^2g}{\partial X_j\partial X_i}.$$
Combined with (\ref{eqdifffg}), it provides,  for any $i,\,j$ between $0$ and $n-1$, the following second order equations
\begin{eqnarray}
\label{eqsecondordre}   \frac{\partial ^2f}{\partial X_i\partial Y_j}= \frac{\partial ^2f}{\partial X_j\partial Y_i}.
\end{eqnarray}
\begin{lemm} \label{lenouveau} Under the numerical assumption  (\ref{eqsurdn}), a generic polynomial $f$ of degree $d$ in $n+1$ variables does not satisfy a nontrivial  second order partial differential
equation of the type (\ref{eqsecondordre}).
\end{lemm}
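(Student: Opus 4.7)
The plan is to reinterpret equation \eqref{eqsecondordre} as a vanishing condition on the Hessian of $f$ attached to a specific element of $\mathrm{Sym}^2 S^1$, and then to use an incidence-variety dimension count to exhibit the locus of ``bad'' $f$ as a proper subvariety of $S^d$. First I would view the Hessian as
\[
D^2 f \colon \mathrm{Sym}^2 S^1 \to S^{d-2}, \quad u\cdot v \mapsto \partial_u\partial_v f,
\]
so that \eqref{eqsecondordre} takes the form $D^2 f(\omega_{ij}) = 0$ with $\omega_{ij} := u_i\, v_j - u_j\, v_i \in \mathrm{Sym}^2 S^1$, where $u_k = \partial/\partial X_k$ and $v_k = \partial/\partial Y_k$. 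Using only the independence of $u_i, u_j$, one checks that $\omega_{ij} = 0$ forces $v_i = \alpha u_i$ and $v_j = \alpha u_j$ for a common scalar $\alpha$, so the nontriviality of the PDE is exactly $\omega_{ij} \neq 0$ in $\mathrm{Sym}^2 S^1$.

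Next I would locate the set of possible $\omega_{ij}$'s inside the determinantal subvariety
\[
\Sigma := \{\omega \in \mathrm{Sym}^2 S^1 : \mathrm{rk}\,\omega \leq 4\}.
\]
Each $\omega_{ij}$ is a difference of two rank-$\leq 2$ symmetric tensors, hence lies in $\Sigma$; conversely, over $\mathbb{C}$ any element of $\Sigma$ is a sum $\sum_{k=1}^{4} w_k^{2}$ which regroups as $(w_1+iw_2)(w_1-iw_2)+(w_3+iw_4)(w_3-iw_4)$, exhibiting it in the form $uv-u'v'$. The standard dimension formula for the variety of symmetric $(n+1)\times(n+1)$ matrices of rank $\leq 4$ then gives $\dim \Sigma = 4(n+1)-\binom{4}{2} = 4n-2$.

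The last step is the dimension count on the incidence variety
\[
I := \{(f,[\omega]) \in S^d \times \mathbb{P}(\Sigma) : D^2 f(\omega) = 0\}.
\]
For any fixed $\omega \neq 0$, the contraction $f \mapsto D^2 f(\omega) \colon S^d \to S^{d-2}$ is surjective: its transpose under the apolarity pairing is multiplication by $\omega$, $S^{d-2} \hookrightarrow S^d$, which is injective because $S^{\bullet}$ is a domain. Hence every fiber of the projection $I \to \mathbb{P}(\Sigma)$ has codimension $\dim S^{d-2}$ in $S^d$, and
\[
\dim I = (4n-3) + \dim S^d - \dim S^{d-2}.
\]
Hypothesis \eqref{eqsurdn} reads precisely $4n-2 \leq \dim S^{d-2}$, whence $\dim I \leq \dim S^d - 1$; the first projection $I \to S^d$ is not dominant, so a generic $f$ admits no nonzero $\omega \in \Sigma$ with $D^2 f(\omega) = 0$, and the lemma follows. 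The step requiring most care is the identification $\Sigma = \{uv - u'v'\}$ as the full rank-$\leq 4$ locus, since this is what produces the sharp dimension $4n-2$ matching hypothesis \eqref{eqsurdn}; the apolarity surjectivity and the incidence count are then routine.
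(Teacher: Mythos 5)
Your proof is correct and follows essentially the same route as the paper: both arguments parametrize the relevant second-order operators by the rank $\leq 4$ determinantal locus in $\mathrm{Sym}^2$ of the space of constant vector fields, of projective dimension $4n-3=4(n-3)+9$, observe that each nonzero operator cuts out a subspace of codimension $h^0(\mathbb{P}^n,\mathcal{O}_{\mathbb{P}^n}(d-2))$ in $S^d$ by surjectivity of the contraction $\partial^2_U$, and conclude by an incidence/dimension count that hypothesis (\ref{eqsurdn}) prevents the union of these subspaces from being dense in $S^d$. You merely spell out two points the paper leaves implicit, namely the identification of the operators $\partial_{X_i}\partial_{Y_j}-\partial_{X_j}\partial_{Y_i}$ with the full rank $\leq 4$ locus and the apolarity duality proving surjectivity of the contraction.
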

\begin{proof} This is  a dimension count. The differential equations appearing in (\ref{eqsecondordre}) are linear second order equations
determined by elements $U$ of rank $\leq 4$ in ${\rm Sym}^2H^0(\mathbb{P}^n,\mathcal{O}_{\mathbb{P}^n}(1))^*$. The nonzero
elements $U$ of rank $\leq 4$ are parameterized by a variety of dimension $4(n-3)+9$. Given a nonzero $U$, the
differential equation $\partial^2_U  \phi=0$ determines a linear subspace $H_U$ of $H^0(\mathbb{P}^n,\mathcal{O}_{\mathbb{P}^n}(d))$
of codimension $h^0(\mathbb{P}^n,\mathcal{O}_{\mathbb{P}^n}(d-2))$ since
$$\partial^2_U: H^0(\mathbb{P}^n,\mathcal{O}_{\mathbb{P}^n}(d))\rightarrow H^0(\mathbb{P}^n,\mathcal{O}_{\mathbb{P}^n}(d-2))$$
is surjective.
If   (\ref{eqsurdn}) holds, the union of the spaces $H_U$ does not fill-in a Zariski open set of  $H^0(\mathbb{P}^n,\mathcal{O}_{\mathbb{P}^n}(d))$.
\end{proof}
It follows that all the equations appearing in (\ref{eqsecondordre}) are trivial, which says equivalently
that  for any $i,\,j$
$$\frac{\partial}{\partial X_i}\frac{\partial}{\partial Y_j}- \frac{\partial}{\partial X_j}\frac{\partial}{\partial Y_i}=0\,{\rm in}\,\, {\rm Sym}^2(H^0(\mathbb{P}^n,
\mathcal{O}_{\mathbb{P}^n}(1))^*).$$
These equations exactly say that
for some $\lambda\in \mathbb{C}$,
$$\frac{\partial}{\partial Y_i}=\lambda \frac{\partial}{\partial X_i}$$
for $i=0,\ldots,\,n-1$.
Thus $g-\lambda f$ satisfies  $ \frac{\partial}{\partial X_i}(g-\lambda f)=0$ for $i=0,\ldots,\,n-1$, hence
$g=\lambda f +\alpha X_n^d$ for some coefficient $\alpha$, which concludes the proof.
\end{proof}
We conclude this section by the following proposition  which shows the relevance of first order Schiffer variations to our subject.
\begin{prop}\label{proschifferrec} Let $f,\,g$ be two degree $d$  homogeneous   polynomials in $n+1$ variables defining smooth hypersurfaces $X_f$, $X_g$. Assume $d\geq 4,\,n\geq 4$ and  $f$ is generic. Then if there exists a linear isomorphism
$i: R^d_f\cong R^d_g$ mapping the set of first order Schiffer variations of $f$ to the set of first order Schiffer variations of $g$, $X_f$ is isomorphic to $X_g$.
\end{prop}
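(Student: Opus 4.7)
The plan is to use the hypothesis to recover a linear isomorphism $\tilde\sigma\colon S^1_f\xrightarrow{\sim} S^1_g$, show that the induced graded ring isomorphism $\tilde\sigma_{\ast}\colon S^{\ast}_f\to S^{\ast}_g$ sends $J^d_f$ to $J^d_g$, upgrade this to the degree $d-1$ pieces via the Gorenstein property of the Jacobian ring, and then conclude by Proposition \ref{theoyau}.

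The first step is to verify that, for generic $f$, the projection $\pi_f\colon \mathbb{P}(S^d)\dashrightarrow \mathbb{P}(R^d_f)$ restricts to a closed embedding on the Veronese $v_d(\mathbb{P}(S^1))$. Injectivity amounts to $x^d-cy^d\notin J^d_f$ whenever $[x]\neq [y]$ in $\mathbb{P}(S^1)$, and the infinitesimal version is analogous; both follow from a dimension count, since the set $\{x^d-cy^d\}\subset S^d$ has dimension at most $2n+3$ while $\dim J^d_f=(n+1)^2$ is much smaller than $\dim S^d = \binom{n+d}{d}$ for $d,n\geq 4$ (verifiable by specialization away from the Fermat locus). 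The existence of $i$ then forces the same property on the $g$-side, so $i$ descends to a projective isomorphism of the two projected Veronese varieties, necessarily induced by a linear isomorphism $\tilde\sigma\colon S^1_f\to S^1_g$ together with a scalar function $\lambda(x)\in\mathbb{C}^{\ast}$ satisfying
\[
i(\pi_f(x^d)) = \lambda(x)\, \pi_g(\tilde\sigma(x)^d).
\]
As both sides scale by $c^d$ under $x\mapsto cx$, the function $\lambda$ factors through $\mathbb{P}(S^1_f)\cong\mathbb{P}^n$; being regular on $\mathbb{P}^n$ it must be a constant $\lambda\in\mathbb{C}^{\ast}$.

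Extending $\tilde\sigma$ to a graded ring isomorphism $\tilde\sigma_{\ast}\colon S^{\ast}_f\to S^{\ast}_g$, the two linear maps $i\circ\pi_f$ and $\lambda\,\pi_g\circ\tilde\sigma_{\ast}$ from $S^d_f$ to $R^d_g$ agree on the spanning set $\{x^d\mid x\in S^1\}$ (classical polarization), hence coincide on $S^d_f$, so $\tilde\sigma_{\ast}(J^d_f)=J^d_g$. To promote this to degree $d-1$, I would use the Gorenstein property of $R_f^{\ast}$, whose socle lies in degree $N=(n+1)(d-2)$: since $d-1<N$, any $\bar P\in R^{d-1}_f$ killed by all of $S^1$ is killed by the whole irrelevant ideal (since $S^1$ generates $R_f^{\ast}$), hence lies in the socle, hence vanishes. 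This yields the purely multiplicative colon identity $J^{d-1}_f=\{P\in S^{d-1}\mid P\cdot S^1\subset J^d_f\}$, which is preserved under $\tilde\sigma_{\ast}$, giving $\tilde\sigma_{\ast}(J^{d-1}_f)=J^{d-1}_g$. Setting $h:=\tilde\sigma^{-1}_{\ast}(g)\in S^d_f$, the chain rule identifies $J^{d-1}_h$ with $\tilde\sigma^{-1}_{\ast}(J^{d-1}_g) = J^{d-1}_f$; Proposition \ref{theoyau} then places $f$ and $h$ in the same $PGl(n+1)$-orbit, so $X_f\cong X_h\cong X_g$.

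The main obstacle is the first step: proving that $\pi_f$ is a closed embedding of the Veronese for generic $f$, which requires both the injectivity statement ($x^d-cy^d\notin J^d_f$ for non-proportional $x,y$) and its infinitesimal counterpart. Once this is secured, the subsequent Gorenstein/colon-ideal computation is standard and the conclusion reduces cleanly to the Mather--Yau/Donagi theorem.
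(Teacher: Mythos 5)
Your argument is correct and follows essentially the same route as the paper: show the projected $d$-th Veronese is embedded in $\mathbb{P}(R^d_f)$ for generic $f$ (the paper makes your dimension count rigorous by reducing to the non-existence of second-order PDEs satisfied by a generic $f$, its Lemma \ref{lenouveau} — note that comparing $\dim J^d_f$ with $\dim S^d$ alone is not quite enough, since $J^d_f$ is not a generic linear subspace, so one needs the incidence-variety version of the count that you gesture at with ``verifiable by specialization''), lift $i$ to a linear automorphism of $S^1$ carrying $J^d_f$ to $J^d_g$, and invoke Proposition \ref{theoyau}. Your Gorenstein/colon-ideal step recovering $J^{d-1}$ from $J^d$ is a worthwhile explicit addition: the paper passes directly from $\tilde{i}_d(J^d_f)=J^d_g$ to Proposition \ref{theoyau}, whose hypothesis as stated concerns the degree-$(d-1)$ pieces.
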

\begin{proof} The condition on $d,\, n$ are used in the following

 \begin{lemm}
 For $f$ generic with  $d\geq 4, n\geq 2$, if $x\in S^1$ is nonzero, then
  $x^d\not=0$ in $R^d_f$. Furthermore the map
  $x\mapsto x^d$ induces a (incomplete)  Veronese  imbedding  $v_f: \mathbb{P}(S^1)\hookrightarrow \mathbb{P}(R_f^d)$.
  \end{lemm}
  \begin{proof} The statement is equivalent to proving that, if $f$ is a generic homogeneous polynomial of degree $d$ in $n+1$ variables, $J_f^d\subset S^d$ does not contain any power $x^d$ of a linear form, or any sum
  $x^d-y^d$ of two such powers. In the first case, we get that $x\cdot  x^{d-1}=0$ in $R^d_f$ and in the second case, we get that $(x-y)(x^{d-1}+x^{d-2}y+\ldots+y^{d-1})=0$ in $R^d_f$. By an easy dimension count, one sees  that  for generic $f$, the multiplication map
  $x:R_f^{d-1}\rightarrow R_f^d$
  by any nonzero linear form
  $x\in S^1$ is injective, so in the first case we conclude that
  $x^{d-1}=0$ in $ R_f^{d-1}$ and in the second case, $x^{d-1}+x^{d-2}y+\ldots+y^{d-1}=0$ in $ R_f^{d-1}$, or equivalently
  \begin{eqnarray}\label{eqpourxdmoins1} x^{d-1}\in J_f^{d-1}\,\,\,{\rm or}\,\,\,x^{d-1}+x^{d-2}y+\ldots+y^{d-1}\in J_f^{d-1}.
  \end{eqnarray}
  Using the fact that $n\geq 2$, and choosing a coordinate system such that $x=X_0,\,y=X_1$, we can write (\ref{eqpourxdmoins1}) as
   \begin{eqnarray}\label{eqpourxdmoins1} X_0^{d-1}=\sum_i\alpha_i\frac{\partial f}{\partial X_i}\,\,\,{\rm or}\,\,\,X_0^{d-1}+X_0^{d-2}X_1+\ldots+X_1^{d-1}=\sum_i\alpha_i\frac{\partial f}{\partial X_i}\in J_f^{d-1}
  \end{eqnarray}
  for some nonzero coefficients $\alpha_i$.
     We thus get in both cases
  a nontrivial second order equation
  $$\sum_i\alpha_i\frac{\partial^2 f}{\partial X_2 \partial X_i}=0,$$
  which is excluded by Lemma \ref{lenouveau}.
  \end{proof}

We now conclude the proof. Using the lemma, the projectivized isomorphism $i$ induces an isomorphism $i_1:\mathbb{P}(S^1)\cong \mathbb{P}(S^1)$ between the two projected Veronese $v_f(\mathbb{P}(S^1)) \subset \mathbb{P}(R^d_f)$ and $v_g(\mathbb{P}(S^1)) \subset \mathbb{P}(R^d_g)$, that is, $i_1$ satisfies  $i\circ v_f=v_g\circ i_1$.  The projective isomorphism $i_1$ lifts to  a linear isomorphism $\tilde{i}_1:S^1\cong S^1$.
The incomplete Veronese embeddings $v_f$, resp.  $v_g$  factor canonically through  the complete Veronese embeddings
$$\mathbb{P}^n(S^1)\stackrel{V_d}{\rightarrow} \mathbb{P}(S^d)\dashrightarrow \mathbb{P}(R^d_f),$$
(resp. $\mathbb{P}^n(S^1)\stackrel{V_d}{\rightarrow} \mathbb{P}(S^d)\dashrightarrow \mathbb{P}(R^d_f)$,)
which implies that the following  diagram
\begin{eqnarray}\label{numerodiagfinfinetfin}
 \xymatrix{
&S^d\ar[r]^{\tilde{i_d}}\ar[d]& S^d\ar[d]\\
&R^d_f\ar[r]^{i}& R^d_g
,}
\end{eqnarray}
where $\tilde{i_d}:S^d\cong S^d$ is induced by $\tilde{i_1}$, is  commutative up to a scalar.

The  vertical quotient maps have for respective  kernels $J_f^d$, $J_g^d$.
We thus conclude that $\tilde{i_d}(J_f^d)=J_g^d$, and thus, by Proposition \ref{theoyau}, $X_f$ is isomorphic to $X_g$.
\end{proof}
\subsection{Formal properties of Schiffer variations\label{secschifor}}
Our strategy for the proof of Theorem \ref{theodonagivoisin}  when $d$ divides $n+1$ consists in finding
a characterization of the set of   Schiffer variations of a hypersurface $X_f$ that can be read from its local variation of Hodge structure. In fact we will need  not only  the infinitesimal variation of Hodge structure (IVHS) of $X_f$ but also the ``deformation of the IVHS'' along the Schiffer variation, which
is a higher  order argument. The IVHS itself provides  the first order invariants of the variation of Hodge structure of $X_f$ at the point $[f]$, hence part of the multiplicative structure of the Jacobian ring $R_f$, by (\ref{numerodiagcagri}). We wish in this section analyze the specifities of the first order Schiffer variations $\phi\in R^d_f$ as elements of the  Jacobian ring $R_f^*$
and  also
analyze, using (\ref{eqjacpetit}),  the way the  Jacobian ring deforms along them.

Recall that a first order Schiffer variation of $f$ is an element
$\phi=x^d\in R^d_f$, where $x\in S^1$. We will consider only
the subring $R_f^{d*} $ of $R_f^*$, because, by Theorem \ref{theCagri}, this is, when $d$ divides $n+1$, the data that we get from the IVHS of $f$. As $R_f^{d*} $ contains only the graded pieces of degree divisible by $d$, it does not
contain the linear form $x$ and by Proposition \ref{proschifferrec}, recovering the polynomial structure of $R_f^d$ precisely means recognizing the set of powers $\phi=x^d$.
However the ideal of $R_f^{*d}$ generated by such a  $\phi\in R_f^{d} $ has some special properties  that we can describe  using only
the multiplication map $R^{id}_f\otimes R^{jd}_f\rightarrow R^{(i+j)d}_f$ for $i+j\leq 3$.

 Given $\phi\in R^d_f$ and vector subspaces $I_k^{*d}\subset R^{*d}$ for $*=1,\,2,\,3$, and  $1\leq k\leq d-1$, consider  the following condition (*):
\begin{eqnarray} \label{seccond*}
  {\rm dim}\,I_{k}^{ id}={\rm dim}\,R^{id-k}_f\,\,{\rm  for }\,\,i=1,\,2,\,3.\\
 \label{item2} I_{k}^dI_{d-k}^d\subset \phi R^d_f,  \,\,I_{k}^dI_{d-k}^{2d}\subset \phi R^{2d}_f,
\\
 \label{item3} R^d_f I_{k}^{id}\subset  I_{k}^{(i+1)d}.\\
 \label{item4} I_{k}^d\cdot I_{l}^d\subset I_{k+l}^{2d}\,\,{\rm for}\,\,k+l\leq d-1.
\end{eqnarray}
Then condition (*) is satisfied by $I_{x,k}^{*d}:=x^kR^{*d-k}_f\subset R^{*d}_f$  for  $x\in S^1$   generic, with $\phi=x^d$, at least if $d$ is large enough.
Indeed,
condition (\ref{seccond*}) follows in that case from the fact that the multiplication by $x$ is injective in the relevant degrees (see Lemma \ref{claimpourplustardaussi}), at least if $d$ or $n$ are large enough, and the other conditions are obvious.

The second obvious property of a Schiffer variation  is described in  the following lemma.
\begin{lemm}\label{leconstant} Let $x\in S^1$ determine  a first order Schiffer variation $f_t=f+tx^d$ of   $f$ with tangent vector $\phi=x^d\in R^d_f$,  and let $I_{x,d-1,t}^{*d}:=x^{d-1}R^{*d-d+1}_{f_t}\subset R^{*d}_{f_t}$ be defined as above.
Then the quotient ring $R^{*d}_{f_t}/I_{x,d-1,t}^{*d}$ does not deform (as a ring) along the Schiffer variation $(f_t)$.
\end{lemm}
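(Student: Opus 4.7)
The plan is essentially a direct computation: the Schiffer variation changes only one generator of the Jacobian ideal, and that change happens to be a multiple of $x^{d-1}$, so modulo $x^{d-1}$ the Jacobian ideal stays constant.

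More concretely, I would choose coordinates on $\mathbb{P}^n$ so that $x=X_0$. Differentiating $f_t=f+tx^d$ then gives
\begin{eqnarray*}
\frac{\partial f_t}{\partial X_0}=\frac{\partial f}{\partial X_0}+d\,t\,x^{d-1},\qquad \frac{\partial f_t}{\partial X_i}=\frac{\partial f}{\partial X_i}\quad\text{for }i\geq 1.
\end{eqnarray*}
Hence the generators of $J_{f_t}^{d-1}$ and $J_f^{d-1}$ coincide up to an element of $x^{d-1}S^0$, and consequently, as graded ideals of $S^*$,
\begin{eqnarray*}
J_{f_t}^{*}+(x^{d-1})=J_f^{*}+(x^{d-1}),
\end{eqnarray*}
where $(x^{d-1})\subset S^*$ denotes the principal ideal generated by $x^{d-1}$. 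This is precisely the content of condition (\ref{eqjacpetit}) for the first-order Schiffer variation, but strengthened to an exact equality of ideals.

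The next step is to identify the quotient in the lemma. By definition, $I_{x,d-1,t}^{kd}=x^{d-1}R_{f_t}^{kd-d+1}$ is the image in $R_{f_t}^{kd}=S^{kd}/J_{f_t}^{kd}$ of the subspace $x^{d-1}S^{kd-d+1}\subset S^{kd}$, so
\begin{eqnarray*}
R_{f_t}^{kd}/I_{x,d-1,t}^{kd}=S^{kd}/\bigl(J_{f_t}^{kd}+x^{d-1}S^{kd-d+1}\bigr).
\end{eqnarray*}
By the ideal identity above, the right-hand side equals $S^{kd}/(J_f^{kd}+x^{d-1}S^{kd-d+1})$, which is independent of $t$. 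Summing over $k$ and keeping track of the ring structure inherited from $S^*$ yields a canonical graded ring isomorphism
\begin{eqnarray*}
R_{f_t}^{*d}/I_{x,d-1,t}^{*d}\;\cong\;R_f^{*d}/I_{x,d-1,0}^{*d},
\end{eqnarray*}
which is exactly the assertion that the quotient ring does not deform along $(f_t)$.

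Frankly, there is no serious obstacle here: the whole argument rests on the observation that the Schiffer perturbation $tx^d$ contributes only a single new generator $d\,t\,x^{d-1}$ to the Jacobian ideal, and this generator is itself a multiple of $x^{d-1}$ and hence invisible after passing to the quotient by $(x^{d-1})$. The only minor point worth checking explicitly is that the canonical identification above is a ring homomorphism, but this is automatic since the multiplication on each $R_{f_t}^{*d}/I_{x,d-1,t}^{*d}$ is just the one induced from the polynomial ring $S^*$.
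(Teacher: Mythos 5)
Your proof is correct and is essentially the same as the paper's: the paper's one-line argument is precisely that $J_{f_t}=J_f$ modulo $x^{d-1}$, so the quotient $S^{*d}/(J_{f_t}^{*d}+x^{d-1}S^{(*-1)d+1})$ is constant in $t$. You have simply spelled out the coordinate computation and the identification of the quotient ring explicitly.
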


\begin{proof} Indeed, if $f_t=f+tx^d$, then $J_{f_t}=J_{f}$ modulo $x^{d-1}$, hence the quotient
$$S^{*d}/(J_{f_t}^{*d}+ x^{d-1}S^{(*-1)d+1})$$ is constant. A fortiori, its isomorphism class as a graded ring does not depend on $t$.
\end{proof}
\section{Proof of Theorem \ref{theodonagivoisin} when $d$ divides $n+1$}
\subsection{Specialization and Schiffer variations\label{secschideg}}
We will consider in this section  singular hypersurfaces $X_f$ of degree $d$ in  $\mathbb{P}^n$  defined by a polynomial    of the form $f=\sum_{i=1}^mf_ig_i$, with $n-2m\geq0$. If $d=2d'$ is even, we will choose the $f_i$ and $g_i$ to be of degree $d'$ and if $d=2d'+1$ we will choose the $f_i$ of degree $d'$ and the $g_i$ of degree $d'+1$.
The hypersurface $X_f$ is then singular along the variety $Z$ defined by the polynomials  $f_i$ and $g_i$ which are all of degree $\leq \frac{d+1}{2}$, and when they are generically chosen, it is  of dimension $n-2m$.
Let us start with the following result  of independent interest, which   will be important below and in the next section.
\begin{prop}\label{leconiveausing} Let $f$ be   a homogeneous polynomial of degree $d$ in $n+1$ variables, defining a hypersurface $X_f$ singular along a smooth subvariety $Z$ defined by homogeneous polynomial equations of degree
$\leq \frac{d+1}{2}$. Then     the dimension of the space $R^{k}_f$ is  equal to the  dimension of the space $R^{k}_{f_{\rm gen}}$    for a generic polynomial   $f_{\rm gen}$,  assuming
\begin{eqnarray}\label{eqestimee}k<  (n-{\rm dim}\,Z+1)\frac{d-3}{2}.
\end{eqnarray}
\end{prop}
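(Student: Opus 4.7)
The plan splits into an easy and a hard half. The easy half: in any flat $1$-parameter family $(f_t)$ with $f_0=f$ and $f_t$ a generic polynomial for $t\neq 0$, the rank of the multiplication map $\mu_{k,t}:(S^{k-d+1})^{n+1}\to S^k$, $(Q_i)\mapsto\sum_i Q_i(\partial f_t/\partial X_i)$, is lower semi-continuous, so $\dim J_{f_0}^k\leq\dim J_{f_t}^k$ and hence $\dim R_f^k\geq\dim R_{f_{\mathrm{gen}}}^k$. The hard half, $\dim R_f^k\leq\dim R_{f_{\mathrm{gen}}}^k$, is equivalent to showing that all first syzygies of the partials $P_i=\partial f/\partial X_i$ of degree $\leq k+d-1$ are Koszul syzygies, i.e., to the vanishing of the first Koszul homology $H_1(K_\bullet)_{\leq k+d-1}=0$ where $K_\bullet$ is the Koszul complex on the $P_i$. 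So the content of the proposition is this Koszul-vanishing statement.

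To establish it I would exploit the two hypotheses on the singular locus $Z$: codimension $c:=n-\dim Z$ in $\mathbb{P}^n$, and generation of $I_Z$ in degree $\leq(d+1)/2$. Since $Z$ is smooth of codimension $c$, after a generic choice among the low-degree generators one can extract $h_1,\ldots,h_c\in I_Z$ of degree $\leq(d+1)/2$ forming a regular sequence in $S$. The partials $P_i$, vanishing on $Z$, lie in $I_Z$ and admit expressions $P_i=\sum_j a_{ij}h_j$ with $\deg a_{ij}\geq(d-1)-(d+1)/2=(d-3)/2$. A candidate non-Koszul first syzygy of the $P_i$ in degree $q$ then produces, via this substitution, an auxiliary first syzygy of the regular sequence $h_1,\ldots,h_c$ in degree $q-(d-3)/2$; since the Koszul complex of $(h_\bullet)$ is exact, the latter is itself Koszul, and iterating the exchange a total of $c+1$ times yields enough room to lift the original syzygy back to a Koszul syzygy of the $P_i$. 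The cumulative ``degree slack'' required is precisely $(c+1)(d-3)/2$, which is where the bound in the statement comes from.

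The main obstacle is the precise degree-tracking in this iteration, especially as $Z$ is only assumed smooth and low-degree generated — not a priori a complete intersection — so one has to control how higher syzygies of $I_Z$ beyond the chosen regular sequence $(h_1,\ldots,h_c)$ can enter the exchange. A cleaner realization, which I would probably pursue, is to first prove the proposition for the explicit product-form polynomials $f=\sum_{i=1}^m f_ig_i$ introduced at the start of this section, where $I_Z=(f_\bullet,g_\bullet)$ is a complete intersection of length $c=2m$ and the Koszul complex of $I_Z$ is transparent; then deduce the general case by specialization within the family of polynomials singular along $Z$, combined with the semi-continuity already used for the easy half.
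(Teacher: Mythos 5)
Your framing is right on the easy half (lower semicontinuity of the rank of $(Q_i)\mapsto\sum_iQ_i\partial f/\partial X_i$ gives $\dim R^k_f\geq\dim R^k_{f_{gen}}$) and you have correctly located where the numerical bound must come from, namely the gap $(d-1)-\frac{d+1}{2}=\frac{d-3}{2}$ between the degree of the partials and the degrees of the equations of $Z$. But the hard half is not proved. A first imprecision: equality of dimensions is not equivalent to the vanishing of the first Koszul homology alone; computing $\dim J^k_f$ from the Koszul complex requires exactness at \emph{every} negative degree, i.e.\ the vanishing of all $H_i(K_\bullet)$, $i\geq1$, in the relevant range (or at least of the alternating sum $\sum_{i\geq1}(-1)^{i-1}\dim H_i(K_\bullet)$), so the reduction must be set up for the whole tail of the complex. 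More seriously, the central "syzygy exchange" is asserted, not carried out: writing $P_i=\sum_ja_{ij}h_j$ and pushing a syzygy $\sum_iQ_iP_i=0$ to the syzygy $\bigl(\sum_iQ_ia_{ij}\bigr)_j$ of the regular sequence $(h_j)$ destroys information — anything in the kernel of the $(n+1)\times c$ matrix $(a_{ij})$ maps to the zero syzygy — so knowing the image syzygy is Koszul for $(h_\bullet)$ does not let you lift back to a Koszul syzygy of the $P_i$; the claim that $c+1$ iterations with cumulative slack $(c+1)\frac{d-3}{2}$ suffice is precisely the content of the proposition and is nowhere established. In addition, when $I_Z$ is not a complete intersection a regular sequence $(h_1,\ldots,h_c)$ of low-degree elements generates a strictly smaller ideal, so the decomposition $P_i=\sum_ja_{ij}h_j$ may simply not exist; and your fallback (prove the product-form case and specialize) only yields the statement for $f$ \emph{generic} among polynomials singular along that particular complete-intersection $Z$, not for an arbitrary $f$ and arbitrary smooth low-degree $Z$ as the proposition asserts — although that weaker version is, as it happens, all the paper uses afterwards.

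For comparison, the paper obtains the vanishing cohomologically rather than by syzygy manipulation. It blows up $Z$, $\tau:Y\to\mathbb{P}^n$, where the partials become a base-point-free system in $|\tau^*\mathcal{O}_{\mathbb{P}^n}(d-1)(-E)|$ whose Koszul complex is exact as a complex of sheaves; the exactness of the global-sections complex in negative degrees (which is exactly your Koszul-homology vanishing) is then read off the hypercohomology spectral sequence from two vanishing inputs: the computation of $R^i\tau_*$ of $\tau^*\mathcal{O}_{\mathbb{P}^n}(p(d-1)+k)(-pE)$, which handles the terms with $-p<n-\dim Z$, and Kodaira vanishing for $\tau^*\mathcal{O}_{\mathbb{P}^n}(l)(-mE)$, nef and big precisely when $l>m\frac{d+1}{2}$ — this is where the hypothesis on the degrees of the equations of $Z$ enters and what produces the threshold $-p\frac{d-3}{2}>k$; combining the two ranges of $p$ gives the bound $(n-\dim Z+1)\frac{d-3}{2}$. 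If you want to rescue your plan, replacing the syzygy induction by this sheaf-theoretic argument (or some equally precise cohomological vanishing) is the missing step.
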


\begin{proof}[Proof of Proposition \ref{leconiveausing}]  Recall that $J_f$ is generated by the partial derivatives $\frac{\partial f}{\partial X_i}$ for $i=0,\ldots, n$. For the generic polynomial $f_{\rm gen}$, these partial derivatives form a linear system $W$ of degree $d-1$ polynomials  with no base-point on $\mathbb{P}^n$, and the associated Koszul resolution
\begin{eqnarray}\label{eqkoszulpourJf}0\rightarrow \bigwedge^{n+1}W\otimes\mathcal{O}_{\mathbb{P}^n}(-(n+1)(d-1))\rightarrow\ldots\rightarrow W\otimes \mathcal{O}_{\mathbb{P}^n}(-(d-1)) \stackrel{\alpha}{\rightarrow} \mathcal{O}_{\mathbb{P}^n}\rightarrow 0,
\end{eqnarray}
twisted by $\mathcal{O}_{\mathbb{P}^n}(k)$, allows us to compute the dimension of $R^k_{f_{\rm gen}}$ or, equivalently, of
$J^k_{f_{\rm gen}}={\rm Im}\,H^0(\alpha(k))$, using the fact that this twisted Koszul complex, that we will denote by
$\mathcal{K}_{\mathbb{P}^n,k}^*$,  remains exact at the level of global sections, at least in strictly  negative degrees, where we put the last term $\mathcal{O}_{\mathbb{P}^n}(k)$ in degree $0$,  which is what we  need to compute ${\rm dim}\,J^k_{f_{\rm gen}}$. Indeed, we then get an equality
\begin{eqnarray}\label{eqkoszulpourJfglobal} {\rm dim}\,J^k_{f_{\rm gen}}=(n+1){\rm dim}\,S^{k-d+1}-\frac{n(n+1)}{2}{\rm dim}\,S^{k-2d+2}+\binom{n+1}{3}{\rm dim}\,S^{k-3d+3}\ldots.
\end{eqnarray}

In our special case, the partial derivatives $\frac{\partial f}{\partial X_i}$ form a linear system $W$ of degree $d-1$ polynomials  on $\mathbb{P}^n$ with base-locus $Z$ and we have to understand how this affects the computation. Clearly, the Koszul complex (\ref{eqkoszulpourJf}) is no more exact.
Let $\tau:Y\rightarrow \mathbb{P}^n$ be the  blow-up of $\mathbb{P}^n$ along $Z$, and let $E$ be the exceptional divisor of $\tau$. Then $W$ provides a base-point free  linear system, that we also denote  by $W$, of sections of the line bundle
$L:=\tau^*\mathcal{O}_{\mathbb{P}^n}(d-1)(-E)$ on $Y$. Then we have an exact Koszul complex on
$Y$ associated with $W$, which has the following form:
\begin{eqnarray}\label{eqkoszulpourJfY}0\rightarrow \bigwedge^{n+1}W\otimes\mathcal{O}_{Y}(-(n+1)L)\rightarrow\ldots\rightarrow W\otimes \mathcal{O}_{Y}(-L) \stackrel{\alpha'}{\rightarrow} \mathcal{O}_{Y}\rightarrow 0.
\end{eqnarray}
We now twist by $\tau^*\mathcal{O}_{\mathbb{P}^n}(k)$ so that ${\rm Im}\,H^0(\alpha'(k))= {\rm dim}\,J_f^k$. Let $\mathcal{K}_{Y,k}^*$ be this twisted Koszul complex.
We observe that, as only nonnegative  twists of $E$ appear in $\mathcal{K}_{Y,k}^*$, the global sections of $\mathcal{K}_{Y,k}^*$ are
$\bigwedge^i W\otimes S^{k-i(d-1)}$ in degree $-i$. So our problem is actually to prove that,
if the inequality (\ref{eqestimee}) holds, the complex   $K_{Y,k}^*$ of  global sections of $\mathcal{K}_{Y,k}^*$ is as before  exact  in strictly negative degrees. To prove this, we have to analyze
the   hypercohomology spectral sequence
\begin{eqnarray}\label{eqspecseq} E_1^{p,q}=H^q(Y,\mathcal{K}_{Y,k}^p)\Rightarrow \mathbb{H}^{p+q}(Y, \mathcal{K}_{Y,k}^*).
\end{eqnarray}
of $\mathcal{K}_{Y,k}^*$. As $\mathcal{K}_{Y,k}^*$ is exact, one has
$\mathbb{H}^{p+q}(Y, \mathcal{K}_{Y,k}^*)=0$, hence
\begin{eqnarray}\label{eqvanglobK} E_\infty^{p,q}=0.
\end{eqnarray}
  We have
$$H^q(Y,\mathcal{K}_{Y,k}^p)=  \bigwedge^{-p}W\otimes H^q(Y,\mathcal{O}_{Y}(pL(k)),$$
where
\begin{eqnarray}\label{eqformuletardivepourref} pL(k) =\tau^*\mathcal{O}_{\mathbb{P}^n}(p(d-1)+k)(-pE).
\end{eqnarray}

  We now observe that, when
$
-p<n-{\rm dim}\,Z$,
we have  $H^q(Y,\mathcal{K}_{Y,k}^p)=0$  for
  $q\not= 0,\, n$. This is because
one then has $R^i\pi_* \mathcal{K}_{Y,k}^p=0$ for $i>0$, and $$R^0\pi_* \mathcal{K}_{Y,k}^p\cong \mathcal{O}_{\mathbb{P}^n}(p(d-1)k).$$

When $
-p\geq n-{\rm dim}\,Z$,  we have  $H^q(Y,\mathcal{K}_{Y,k}^p)=0$ for $q<n-{\rm dim}\,Z-1$. Indeed, this follows again from the Leray spectral sequence
of  $pL(k)$ with respect to the map $\tau$  and the fact that $R^0\tau_*(pL(k))$ has  nonzero cohomology  only in degree
$n$, and the only nonzero other higher direct image  is  $R^{n-{\rm dim}\,Z-1}\tau_*(pL(k))$, which  contributes only to cohomology of degree $\geq n-{\rm dim}\,Z-1$.

A second source of vanishing for the $E_1^{p,q}$ of this spectral sequence comes from Kodaira vanishing applied to the line bundle $pL(k)$ on $Y$.
We observe that $Z$ is by assumption defined by equations of degree $\leq \frac{d+1}{2}$, so that
the line bundle
$\tau^*\mathcal{O}_{\mathbb{P}^n}(l)(-mE)$ on $Y$ is nef and big when $m\geq 0$ and $l>m \frac{d+1}{2}$. Using (\ref{eqformuletardivepourref}),
Kodaira vanishing thus tells us that $H^q(Y,\mathcal{K}_{Y,k}^p)=0$ for $q<n$ if
$-p(d-1)-k>-p\frac{d+1}{2}$.

Summarizing, we  proved   that  the spectral sequence (\ref{eqspecseq})  has vanishing as follows
  \begin{eqnarray}\label{eqvanepq}E_1^{p,q}=0  \,\,{\rm if}\,\,\,-p<n-{\rm dim}\,Z\,\,\,{\rm  and}\,\,\ n\not=q,
  \\
\label{eqvanepqprime}  E_1^{p,q}=0  \,\,{\rm if}\,\,\,-p\geq n-{\rm dim}\,Z\,\,\,{\rm  and}\,\,\,q< n-{\rm dim}\,Z-1,
  \\
 \label{eq3}  E_1^{p,q}=0  \,\,{\rm if}\,\,q<n\,\,{\rm and}\,\,-p(d-1)-k>-p\frac{d+1}{2}.
\end{eqnarray}

We now conclude the proof.
The complex     $K_{Y,k}^*$ of global sections of  the complex
$\mathcal{K}_{Y,k}^*$  is the complex $E_1^{*,0}$ of our spectral sequence, hence its cohomology
is the complex  $E_2^{*,0}$.  Recall that we have to prove
the vanishing of  the cohomology of $K_{Y,k}^*$ in strictly negative  degrees.
Let $a<0$ be a fixed negative integer.
There is  no nonzero differential $d_r$ with $r\geq 2$ starting from $E_2^{a,0}$ since $E_r^{p,q}=0$ for $q<0$. As
$E_\infty^{a,0}=0$  (see \ref{eqvanglobK}), it follows that, if  $E_2^{a,0}$ is nonzero, there must be a nonzero differential
$$d_r: E_r^{a-r,r-1}\rightarrow E_r^{a,0}= E_2^{a,0}.$$
Let $p=a-r$. By the vanishing statements
(\ref{eqvanepq}), (\ref{eqvanepqprime}), we must have $r-1=n$ if $-p<n-{\rm dim}\,Z$ and $r-1\geq n-{\rm dim}\,Z-1$ if  $-p\geq n-{\rm dim}\,Z$.
If $r-1=n$, then, as $a<0$, $p=a-r<-n-1$. The term  $\mathcal{K}_{Y,k}^{p}$ is then $0$.
Hence the only nontrivial differential  appears when  $r-1\not=n$.  But then,   $r\geq n-{\rm dim}\,Z$ and thus

\begin{eqnarray} \label{eqfin1} p=a-r<- n+{\rm dim}\,Z.
\end{eqnarray}
 Furthermore, using (\ref{eq3}), $-p(d-1)-k\leq -p\frac{d+1}{2}$, that is,
\begin{eqnarray}\label{eqfin2} -p\frac{d-3}{2}\leq k.
\end{eqnarray}
Combining (\ref{eqfin1}) and (\ref{eqfin2}), we proved  that the existence of a nonzero $E_2^{a,0}$ for some $a<0$ implies
\begin{eqnarray}\label{eqfin3}  (n-{\rm dim}\,Z+1)\frac{d-3}{2}\leq k.
\end{eqnarray}
which contradicts inequality (\ref{eqestimee}). Proposition \ref{leconiveausing} is thus proved.
\end{proof}

Imposing   the dimension of $Z$  to be at most $4$ (we will later choose dimension of $Z$ to be  equal to $3$ if $n$ is even and $4$ if $n$ is odd), we get
\begin{coro}\label{coroestimee}   For a polynomial $f$ as in Proposition \ref{leconiveausing} with $ d$ dividing $n+1$ and  ${\rm dim}\,Z\leq 4$, the dimensions of the spaces $R^d_f$, $R^{2d}_f$ and $R^{3d}_f$ are respectively  equal to the  dimensions of the   spaces $R^d_{f_{\rm gen}}$, $R^{2d}_{f_{\rm gen}}$ and $R^{3d}_{f_{\rm gen}}$ for  generic $f_{\rm gen}$, assuming $d\geq 13$.
\end{coro}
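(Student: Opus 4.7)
The plan is to apply Proposition \ref{leconiveausing} directly, with the largest relevant exponent $k=3d$, and check that the numerical inequality (\ref{eqestimee}) is satisfied under the hypotheses ${\rm dim}\,Z\leq 4$, $d\mid n+1$, and $d\geq 13$. Since the inequality (\ref{eqestimee}) is monotone in $k$ (it becomes easier as $k$ decreases), once it holds for $k=3d$ it automatically holds for $k=2d$ and $k=d$, giving all three dimension equalities simultaneously.

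Concretely, substituting ${\rm dim}\,Z\leq 4$ and $k=3d$ into (\ref{eqestimee}) gives the condition
\begin{equation*}
3d<(n-3)\,\frac{d-3}{2},\qquad\text{i.e.,}\qquad 6d<(n-3)(d-3).
\end{equation*}
The worst case is when $n$ is as small as possible. Under the divisibility assumption $d\mid n+1$, the minimal value of $n$ is $n=d-1$, and the inequality becomes $6d<(d-4)(d-3)=d^2-7d+12$, equivalently $(d-1)(d-12)>0$, which is satisfied precisely for $d\geq 13$. For larger multiples $n+1=jd$ with $j\geq 2$, the left-hand side is unchanged while the right-hand side only grows, so the inequality holds a fortiori. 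Thus the hypothesis of Proposition \ref{leconiveausing} is verified for all three values $k\in\{d,2d,3d\}$.

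There is no real obstacle here: the content of the corollary is entirely packaged in Proposition \ref{leconiveausing}, and what remains is a one-line arithmetic check specializing its numerical hypothesis to the geometric setup (the singular locus $Z$ cut out by polynomials of degree $\leq \tfrac{d+1}{2}$ has the desired low dimension, and $d\mid n+1$ forces $n\geq d-1$). The choice of the threshold $d\geq 13$ is dictated precisely by the factorization $(d-1)(d-12)>0$ appearing in the boundary case $n=d-1$; this is also why the statement only claims equality of dimensions up to graded piece $3d$ — pushing the argument to $k=4d$ would require strengthening the bound on $d$, since $8d<(d-4)(d-3)$ fails for $d=13$.
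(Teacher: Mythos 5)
Your proof is correct and follows exactly the paper's own argument: substitute $\dim Z\leq 4$ and $k=3d$ into the inequality of Proposition \ref{leconiveausing}, use $n\geq d-1$ (from $d\mid n+1$) to reduce to the worst case $6d<(d-4)(d-3)$, and observe this is $(d-1)(d-12)>0$, i.e. $d\geq 13$. Nothing to add.
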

\begin{proof} Indeed, if    $ {\rm dim}\,Z\leq 4$ and $k\leq 3d$, (\ref{eqestimee}) is  satisfied if \begin{eqnarray}\label{eqautreestim} 3d<  (n-3)\frac{d-3}{2}.
\end{eqnarray}
As $n\geq d-1$ and $d\geq 3$, (\ref{eqautreestim}) is satisfied if $3d<  (d-4)\frac{d-3}{2}$, hence if $d\geq13$.
\end{proof}
\begin{rema}{\rm The estimate  of Corollary \ref{coroestimee} is  sharp only when  $d=n+1$.}
\end{rema}

We now assume $f=\sum_{i=1}^mf_ig_i$ with $f_i,\,g_i$ generic and $d$, $n$ are  such that the conclusion of Corollary \ref{coroestimee} holds.
We observe that, with the same notation as above, as $f$ is singular along $Z$, one has
$J_f\subset I_Z$, hence the Jacobian ring $R_f^{d*}$ has $H^0(Z,\mathcal{O}_Z(d*))$ as a quotient. We  will use the notation
$g_{\mid Z}$ for the image of an element $g$ in this quotient.  For subspaces  $I_k^{id}\subset R^{id}_f$,
let us   denote  $\overline{I}_k^{id}:=I_{k\mid Z}\subset H^0(Z,\mathcal{O}_Z(id))$.
Let us prove the following.
\begin{lemm}\label{leidmoinsun}  Let  $\phi\in R_f^{d}$  and  $I_k^{*d}\subset R_f^{*d}$  satisfy condition (*)   (see (\ref{seccond*})-(\ref{item4})).
Then either  $ \overline{I}_{d-1}^d=0$   and $ \overline{I}_{d-1}^{2d}=0$ or  there exists an  element  $g$ of $R^{k}_f$ with $k\geq d-1$, such that $g_{\mid Z}\not=0$ and
\begin{eqnarray}\label{eqpourIdmoinsun} \overline{ I}_{d-1}^d \subset g H^0(Z,\mathcal{O}_Z(d-k)),\,\,\overline{I}_{d-1}^{2d} \subset g H^0(Z,\mathcal{O}_Z(2d-k)).
\end{eqnarray}
\end{lemm}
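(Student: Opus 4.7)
The plan is to restrict the subspaces $I_{d-1}^d \subset R^d_f$ and $I_{d-1}^{2d} \subset R^{2d}_f$ to the singular locus $Z$, exploiting the inclusion $J_f \subset I_Z$ which yields restriction maps $R^k_f \to H^0(Z, \mathcal{O}_Z(k))$. Since $Z$ is a smooth complete intersection of dimension $\geq 3$ in $\mathbb{P}^n$ by our choice of $f_i,g_i$, the Grothendieck--Lefschetz theorem gives $\mathrm{Pic}(Z) = \mathbb{Z}\cdot H$, which makes the graded coordinate ring $R(Z) = \bigoplus_k H^0(Z, \mathcal{O}_Z(k))$ a (graded) UFD. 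I will work with gcds of restricted subspaces inside this UFD.

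If $\overline{I}_{d-1}^d$ and $\overline{I}_{d-1}^{2d}$ are both zero, we are in the first alternative. Otherwise, I would define $g_Z \in R(Z)$ to be the gcd of $\overline{I}_{d-1}^d \cup \overline{I}_{d-1}^{2d}$, a well-defined nonzero homogeneous element up to scalar. Tautologically,
\[
\overline{I}_{d-1}^d \subset g_Z \cdot R(Z)^{d - \deg g_Z}, \qquad \overline{I}_{d-1}^{2d} \subset g_Z \cdot R(Z)^{2d - \deg g_Z}.
\]
Setting $k = \deg g_Z$, I would lift $g_Z$ to $g \in R^k_f$ via the surjectivity of $S^k \to H^0(Z, \mathcal{O}_Z(k))$, which holds in the relevant range since the defining equations of $Z$ have degrees $\leq (d+1)/2$ so the Koszul resolution of $I_Z$ ensures projective normality of $Z$.

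The heart of the argument is then to show $\deg g_Z \geq d-1$, equivalently to give a lower bound of $d-1$ on both $\deg \gcd \overline{I}_{d-1}^d$ and $\deg \gcd \overline{I}_{d-1}^{2d}$. For this I would use condition \eqref{item2}, which upon restriction to $Z$ gives
\[
\overline{I}_{d-1}^d \cdot \overline{I}_1^d \subset \overline{\phi} \cdot R(Z)^d, \qquad \overline{I}_1^d \cdot \overline{I}_{d-1}^{2d} \subset \overline{\phi} \cdot R(Z)^{2d}.
\]
Factoring $\overline{\phi} = \prod p_i^{e_i}$ into primes in the UFD $R(Z)$, for each pair $(a,b)$ in each inclusion above the valuation inequality $\nu_{p_i}(a) + \nu_{p_i}(b) \geq e_i$ must hold; taking minima yields $\deg \gcd \overline{I}_{d-1}^{id} + \deg \gcd \overline{I}_1^d \geq d$ for $i=1,2$. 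The task is then reduced to upper-bounding $\deg \gcd \overline{I}_1^d$ by $1$, which I would do by observing that $\dim I_1^d = \dim R^{d-1}_f$ (by \eqref{seccond*}, and equal to the generic dimension by Proposition \ref{leconiveausing} and Corollary \ref{coroestimee}) is so large that $\dim \overline{I}_1^d$ exceeds $\dim H^0(Z, \mathcal{O}_Z(d-2))$, forcing any common factor of $\overline{I}_1^d$ to have degree at most~$1$.

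The main obstacle will be this dimensional estimate, as it requires controlling the kernel of the restriction $I_1^d \to R(Z)^d$; this is an asymptotic comparison between $\dim R^{d-1}_f$ and cohomology on the low-dimensional $Z$, and should hold for $d$ sufficiently large. Two boundary cases require separate attention: if $\overline{\phi} = 0$ in $R(Z)$, then the integral domain property applied to the displayed inclusions forces $\overline{I}_1^d = 0$ or the vanishing of one of $\overline{I}_{d-1}^d, \overline{I}_{d-1}^{2d}$, and in each sub-case one of the two alternatives follows by direct bookkeeping; the case where only one of $\overline{I}_{d-1}^d, \overline{I}_{d-1}^{2d}$ is zero is handled by applying the same gcd analysis to the nonzero one.
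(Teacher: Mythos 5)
Your strategy coincides with the paper's: restrict everything to $Z$, use ${\rm Pic}\,Z=\mathbb{Z}\,\mathcal{O}_Z(1)$ (the paper works with the divisorial fixed part $FL(\cdot)$ of the restricted linear systems, which is exactly your gcd in the graded UFD $\bigoplus_k H^0(Z,\mathcal{O}_Z(k))$), restrict (\ref{item2}) to $Z$ to get $\deg\gcd\overline{I}_1^d+\deg\gcd\overline{I}_{d-1}^{id}\geq d$ for $i=1,2$, and force $\deg\gcd\overline{I}_1^d\leq 1$ by the dimension count $\dim\overline{I}_1^d>h^0(Z,\mathcal{O}_Z(d-2))$; this is the paper's Claim \ref{claimpourI1}, and the estimate you defer is exactly inequality (\ref{eqpourI1bar}) there, valid for $d$ large. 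Your treatment of the degenerate cases ($\phi_{\mid Z}=0$, or only one of the two spaces vanishing) is also fine.

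There is, however, one genuine gap. The sentence asserting that $\deg g_Z\geq d-1$ is ``equivalently'' a lower bound of $d-1$ on \emph{both} $\deg\gcd\overline{I}_{d-1}^d$ and $\deg\gcd\overline{I}_{d-1}^{2d}$ is not an equivalence: since $g_Z=\gcd(\overline{I}_{d-1}^d\cup\overline{I}_{d-1}^{2d})$ equals the gcd of the two individual gcds, bounding each of them below says nothing about $g_Z$ — a priori they could be coprime, $g_Z$ could be a unit, and the lemma's requirement that a \emph{single} $g$ of degree $\geq d-1$ work simultaneously for both spaces would fail. The missing ingredient is precisely the clause of condition (*) you never invoke, namely (\ref{item3}): $R^d_f\cdot I_{d-1}^{d}\subset I_{d-1}^{2d}$. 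Restricting to $Z$ gives $H^0(Z,\mathcal{O}_Z(d))\cdot\overline{I}_{d-1}^d\subset\overline{I}_{d-1}^{2d}$, and since the complete linear system $|\mathcal{O}_Z(d)|$ is base-point free (gcd equal to $1$, so $\gcd$ of the product set is $\gcd\overline{I}_{d-1}^d$), this forces $\gcd\overline{I}_{d-1}^{2d}$ to divide $\gcd\overline{I}_{d-1}^d$; hence the gcd of the union is $\gcd\overline{I}_{d-1}^{2d}$, of degree $\geq d-1$, and one takes $g$ to be a lift of it. This is exactly how the paper finishes; with that one step added your argument is complete.
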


\begin{proof}  For a nonzero  linear system $W$  on $Z$, let us denote by ${\rm FL}(W)$ (for the ``fixed locus'') the divisorial part of the base-locus of $W$. We now observe that, as $Z$ is a smooth  complete intersection  of dimension at least $3$, one has ${\rm Pic}\,Z=\mathbb{Z}\mathcal{O}_Z(1)$  by Grothendieck-Lefschetz theorem. In particular, if $\overline{I}_l^k\not=0$,  we have
$${\rm FL}(\overline{I}_l^k)=D_l\in |\mathcal{O}_Z(d_{l,k})|,$$
for some nonnegative integers $d_{l,k}$.

We first make  the following
\begin{claim} \label{claimpourI1} For $d$ large enough, one has $\overline{I}_1^d\not=0$, and $d_{1,d}\leq 1$.
\end{claim}
\begin{proof} This is proved by a dimension argument. Indeed, it suffices to prove that
\begin{eqnarray}\label{eqineqpourdimiun}{\rm dim}\,\overline{I}_1^d> h^0(Z,\mathcal{O}_Z(d-2)).
\end{eqnarray}
As ${\rm dim}\,I_1^d={\rm dim}\,R^{d-1}_f$ by (\ref{seccond*}), one has ${\rm dim}\,\overline{I}_1^d\geq {\rm dim}\,S^{d-1}-{\rm dim}\,I_Z(d)
$ and it thus  suffices to prove that \begin{eqnarray}\label{eqpourI1bar}
h^0(Z,\mathcal{O}_Z(d-2))<{\rm dim}\,S^{d-1}-{\rm dim}\,I_Z(d).\end{eqnarray}
Recalling that $Z$ is a complete intersection of $2m<n$ hypersurfaces defined by equations $f_i$ of degree $d'$ and
$g_i$ of degree  $d''$, with $d''-1\leq d'\leq d''$, and $d'+d''=d$, we conclude that
$$h^0(Z,\mathcal{O}_Z(d-2))={\rm dim}\,S^{d-2}-m({\rm dim}\,S^{d''-2}+{\rm dim}\,S^{d'-2}),$$
and that
$${\rm dim}\,I_Z(d)\leq m({\rm dim}\,S^{d''}+{\rm dim}\,S^{d'}).$$
Inequality (\ref{eqpourI1bar}) will thus be a consequence of
\begin{eqnarray}\label{eqpourI1bar1} {\rm dim}\,S^{d-2}-m({\rm dim}\,S^{d''-2}+{\rm dim}\,S^{d'-2})<{\rm dim}\,S^{d-1}-m({\rm dim}\,S^{d''}+{\rm dim}\,S^{d'}).
\end{eqnarray}
Inequality (\ref{eqpourI1bar1}) easily follows (at least for $d$ large enough) from our conditions $n> 2m$ and $d'=d'' =d/2$ if $d$ is even, $d'=d''_1 =(d-1)/2$ if $d$ is odd.
\end{proof}
We now use  the fact that $\overline{I}_1^d\overline{I}_{d-1}^d\subset \phi_{\mid Z}\cdot H^0(Z,\mathcal{O}_Z(d))$
(see (\ref{item2})). Combined with Claim \ref{claimpourI1}, this implies
that, either  $\overline{I}_{d-1}=0$, or $d_{d-1,d}\geq d-1$. Similarly,
as $$\overline{I}_1^{d}\overline{I}_{d-1}^{2d}\subset \phi_{\mid Z}\cdot H^0(Z,\mathcal{O}_Z(2d)),$$ we conclude that
$d_{d-1,2d}=:k\geq d-1$. Finally we use the fact that  $H^0(Z,\mathcal{O}_Z(d))\cdot  \overline{I}_{d-1}^{d}\subset  \overline{I}_{d-1}^{2d}$ (see (\ref{item3})) to deduce that the same $g$ of degree $k\geq d-1$ works for both $\overline{I}_{d-1}^{d}$ and $ \overline{I}_{d-1}^{2d}$. Lemma \ref{leidmoinsun} is now proved.
\end{proof}
We now want to study, for a generic  polynomial $f$ of the form $\sum_{i=1}^mf_ig_i$ as above, the elements $\phi\in R^d_f$ which both satisfy  condition (*)    and the property described in the assertion of Lemma \ref{leconstant}.
As Lemma \ref{leconiveausing}     and Corollary \ref{coroestimee} hold only for $R^d_f$, $R^{2d}_f$ and $R^{3d}_f$ and not for the whole
$R^{*d}_f$ when $f$ is singular, we are going to use only the data of the multiplication map
of $R^*_f$ in degree $d$, which is described by a triple
$(R^d_f,R^{2d}_f,\mu)$ consisting of (isomorphism class of) two vector spaces of dimensions ${\rm dim}\,R^d_{f_{\rm gen}}$, resp.
 ${\rm dim}\,R^{2d}_{f_{\rm gen}}$, and  a symmetric linear map
 $$\mu_f: R^d_f\otimes R^d_f\rightarrow R^{2d}_f.$$
We will also consider similar data $\overline{\mu}_f: \overline{R}^d\otimes \overline{R}^d\rightarrow \overline{R}^{2d}$ for quotients of
$R_f^{d*}$ and $\mu:S^d\otimes S^d\rightarrow S^{2d}$ for the multiplication in the polynomial ring itself. We will call such data a ``partial ring''.

We study now elements
$\phi\in R^d_f$  satisfying the following condition (**) (satisfied by Schiffer variations, see Section \ref{secschifor})

\vspace{0.5cm}

(**)  {\it (i) For $1\leq k\leq d-1$, there exist vector subspaces $I_{k}^d\subset R^d_f$, $I_{k}^{2d}\subset R^{2d}_f$, $I_{k}^{3d}\subset R^{3d}_f$ satisfying  condition (*)    (see (\ref{seccond*})-(\ref{item4})).

(ii)  Along a $1$-parameter family
$f_t$, with $f_0=f$ and $\frac{d}{dt}(f_t)_{|t=0}=\phi$, there exist data
$I_{k,t}^{*d}\subset  R_{f_t}^{*d}$, $*=1,\,2,\,3$, $k=1,\ldots, d-1$, associated  to $\phi_t=\frac{d}{dt}(f_t)\in R^d_{f_t}$, and  also satisfying condition (*).

(iii) The (isomorphism class of the)  partial ring  $(R^d_{f_t}/I_{d-1,t}^d, R^{2d}_{f_t}/I_{d-1,t}^{2d},\overline{\mu})$  does not deform  with $t$.}
\begin{prop}\label{leschiffer} For  $d$  sufficiently large and for a generic $f=\sum_{i=1}^mf_ig_i$  as above, any  $\phi\in R^d_f$ satisfying
(**) is a  first order Schiffer variations of $f$.
\end{prop}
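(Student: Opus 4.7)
The strategy has three phases: extract ideal-theoretic information by restricting to the singular locus $Z$, identify this information as coming from a single linear form $x\in S^1$, and finally use the deformation condition (iii) to promote the restriction statement to a congruence in $R^d_f$ itself. The role of the singularities of $X_f$ along $Z$ is crucial: the inclusion $J_f\subset I_Z$ yields surjections $R_f^{*d}\twoheadrightarrow H^0(Z,\mathcal{O}_Z(*d))$, a ring where divisor-class and linear-system arguments apply. Recall moreover that $Z$ is a smooth complete intersection of dimension $\geq 3$, so $\mathrm{Pic}\,Z=\mathbb{Z}\mathcal{O}_Z(1)$ by Grothendieck--Lefschetz, and the restriction $S^k\twoheadrightarrow H^0(\mathcal{O}_Z(k))$ is surjective in the relevant range.

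In the first phase, I would apply Lemma \ref{leidmoinsun} to $I_{d-1}^d$. Either the restriction $\overline{I}_{d-1}^d$ vanishes, which I expect to exclude by a dimension count using $\dim I_{d-1}^d=\dim R^1_f=n+1$ from (\ref{seccond*}), or there exists $g\in R^k_f$ with $k\geq d-1$ and $\overline{I}_{d-1}^d\subset g\cdot H^0(\mathcal{O}_Z(d-k))$. Since $Z$ has low codimension and is cut out by polynomials of degree $\leq(d+1)/2$, one has $h^0(\mathcal{O}_Z(d-k))<n+1$ for $k>d-1$ when $d$ is large, forcing $k=d-1$ and $\overline{I}_{d-1}^d=\bar g\cdot H^0(\mathcal{O}_Z(1))$ for some nonzero $\bar g\in H^0(\mathcal{O}_Z(d-1))$. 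The compatibility (\ref{item3}) then propagates this to $\overline{I}_{d-1}^{2d}=\bar g\cdot H^0(\mathcal{O}_Z(d+1))$ with the same $\bar g$.

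For the second phase, I would exploit the multiplicative constraint $I_{d-1}^d\cdot I_1^d\subset\phi R^d_f$ from (\ref{item2}), which restricted to $Z$ reads $\bar g\cdot H^0(\mathcal{O}_Z(1))\cdot\overline{I}_1^d\subset\phi_{|Z}\cdot H^0(\mathcal{O}_Z(d))$. A parallel application of Lemma \ref{leidmoinsun} to $I_1^d$, combined with the submodule relations (\ref{item4}), should give an analogous description of $\overline{I}_1^d$, and then a Veronese-type argument on $Z$ in the spirit of Section \ref{sec46} (using that the cone of $(d-1)$-th powers of linear forms in $H^0(\mathcal{O}_Z(d-1))$ is recognizable from the symmetric multiplication $H^0(\mathcal{O}_Z(1))^{\otimes(d-1)}\to H^0(\mathcal{O}_Z(d-1))$) should identify $\bar g$ with $x^{d-1}_{|Z}$ for a specific $x\in S^1$ and show $\phi_{|Z}\propto x^d_{|Z}$.

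The main obstacle is the third phase, namely promoting the restriction statement on $Z$ to a congruence $\phi\equiv\alpha x^d\pmod{J_f^d}$ in $R^d_f$. For this I would apply phases one and two to each $f_t$ in the one-parameter family provided by (ii), producing continuously varying linear forms $x_t\in S^1$ with $x_0=x$ such that $I_{d-1,t}^{*d}$ is represented by $x_t^{d-1}S^{*d-d+1}$ modulo $J_{f_t}$. Condition (iii) then says that $S^{*d}/(J_{f_t}^{*d}+x_t^{d-1}S^{*d-d+1})$ stays in a single isomorphism class as $t$ varies. Differentiating at $t=0$ and absorbing the $\mathrm{PGl}(n+1)$-ambiguity via Proposition \ref{theoyau}, I expect to obtain $\partial\phi/\partial X_i\equiv c_i\,x^{d-1}\pmod{J_f^{d-1}}$ for some constants $c_i$; Euler's identity then yields $d\phi\equiv(\sum_i c_iX_i)\,x^{d-1}\pmod{J_f^d}$. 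A second-order symmetry-of-partials argument as in the proof of Proposition \ref{proschi}, invoking Lemma \ref{lenouveau} and the genericity of $f$, should finally force $\sum_i c_iX_i\propto x$, concluding the proof. This step is delicate because condition (iii) is an isomorphism-of-rings statement rather than an equality of ideals, so one must track the tangent vector of the trivialising coordinate change.
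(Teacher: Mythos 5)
Your overall plan (restrict everything to $Z$ via $J_f\subset I_Z$, exploit $\mathrm{Pic}\,Z=\mathbb{Z}\mathcal{O}_Z(1)$, and use the multiplicative constraints in condition (*)) matches the paper's opening moves, but three essential mechanisms are missing or replaced by steps that do not work. First, you cannot exclude the branch $\overline{I}_{d-1}^{d}=\overline{I}_{d-1}^{2d}=0$ of Lemma \ref{leidmoinsun} by a dimension count: the kernel of restriction to $Z$ contains $I_Z(d)/J_f^d$, which is far larger than $\dim I_{d-1}^d=n+1$, so $I_{d-1}^d$ may well lie entirely in $I_Z(d)$. In the paper this branch (and likewise the case $k\geq d$) leads to the conclusion that $f_t$ stays singular along $Z$, hence $\phi\in I_Z^2(d)$, and ruling \emph{that} out is the hardest part of the argument (Claim \ref{claimpourI1etIdmoins1}): it uses the relation $I_3^d\cdot I_{d-3}^d\subset\phi R^d_f$, the identification of $I_Z(d)/(I_Z^2(d)+J_f^d)$ with a quotient of $H^0(Z,N_Z)$ isomorphic to $H^1(Z,T_Z)$, and ultimately the Bogomolov--Yau--Tsuji stability of $T_Z$. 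None of this appears in your proposal, and without it the case $\phi_{|Z}=0$ is simply open.

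Second, condition (iii) is not used at the end by ``differentiating an isomorphism class''; it is used at the very start, and in a geometric way: the unchanging quotient partial ring surjects onto $(H^0(\mathcal{O}_Z(d)),H^0(\mathcal{O}_Z(2d)),\mu_Z)$, and since the quadrics in $\ker\mu$ cut out the $d$-th Veronese of $\mathbb{P}^n$, one gets for each $t$ an embedding of $Z$ into $\mathbb{P}^n$ (linearly normal, hence the standard one up to $PGl(n+1)$) along which $f_t$ is singular (Lemma \ref{leintermediaire}). This is what produces the dichotomy $\phi\in I_Z^2(d)$ or $\phi-x^d\in I_Z^2(d)$. Third, the identification of the fixed divisor $g$ with $x^{d-1}$ does not come from a Veronese-recognition argument on $Z$ --- you have given no reason why $\bar g$ should be a $(d-1)$-st power --- but from Lemma \ref{lemmapuissance}: because $f_t$ is singular along $Z_g$ with $\deg g\geq d-1$ and $\deg f_t=d$, comparing vanishing orders of $f_{t|Z}$ and $df_t$ along the irreducible factors of $g$ forces $g$ to be the $(d-1)$-st power of a linear form. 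Finally, even granting $\phi=x^d+\alpha$ with $\alpha\in I_Z^2(d)$, the paper kills $\alpha$ by a further multiplicative argument (via $I_1^d\cdot I_{d-1}^d\subset\phi R^d_f$ and the computation $I_Z^2(2d)\cap J_f^{2d}=I_Z(d+1)\cdot J_f^{d-1}$, again resting on stability of $T_Z$), not by a symmetry-of-partials argument in the style of Proposition \ref{proschi}, which addresses a different hypothesis and does not apply here.
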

\begin{rema} {\rm We will also prove later on (see Lemma \ref{lecomplement}) that, in the situation of Proposition \ref{leschiffer}, for a generic first order  Schiffer variation $\phi=x^d$, the only spaces $I_k^{id}$ satisfying Condition (*) with the given $\phi$  are the spaces  $I_{x,k}^{id}=x^kR^{id-k}_f$, hence are  determined by $\phi$.}
\end{rema}
 The proof of Proposition \ref{leschiffer}  will use  several preliminary lemmas.
\begin{lemm} \label{leintermediaire} The  assumptions being the same as in Proposition
\ref{leschiffer}, then

 (a) If $\overline{I}_{d-1}^{2d}=0$, $f_t$ remains singular along $Z$ (or rather, a subvariety deduced from $Z$ by the action of an automorphism of ${\rm PGL}(n+1)$). In particular
$\phi_{\mid Z}=0$.

(b)  If ${\rm FL}(\overline{I}_{d-1}^{2d})$ is defined by
$g\in H^0(Z,\mathcal{O}_Z(k))$, $f_t$ remains (modulo the action of ${\rm PGL}(n+1)$) singular along the  locus  $Z_g:=\{g=0\}\subset Z$.

(c) If $k\geq d$ in (b), $f_t$ remains (modulo the action of ${\rm PGL}(n+1)$) singular along $Z$.

\end{lemm}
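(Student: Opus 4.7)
The plan is to exploit assumption (**)(iii) --- that the partial ring $(R^d_{f_t}/I_{d-1,t}^d,\, R^{2d}_{f_t}/I_{d-1,t}^{2d})$ is constant in $t$ up to isomorphism --- together with the surjective multiplicative restriction map $\rho\colon R_f^{*d}\twoheadrightarrow H^0(Z,\mathcal{O}_Z(*d))$, which exists because $J_f\subset I_Z$ and because $Z$, being a smooth complete intersection of dimension $\geq 3$, is projectively normal. The strategy is to propagate $\rho$ through the deformation and recover from it a subvariety $Z_t\subset \mathrm{Sing}\,X_{f_t}$.

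For part (a), I would first observe that $\overline{I}_{d-1}^{2d}=0$ forces $\overline{I}_{d-1}^{d}=0$: by (\ref{item3}) we have $H^0(Z,\mathcal{O}_Z(d))\cdot \overline{I}_{d-1}^d\subset \overline{I}_{d-1}^{2d}=0$, and any generic section in $H^0(Z,\mathcal{O}_Z(d))$ acts by a non-zero-divisor on the irreducible $Z$. Hence $\rho$ descends to a surjective multiplicative map $\bar\rho\colon R_f^{*d}/I_{d-1}^{*d}\twoheadrightarrow H^0(Z,\mathcal{O}_Z(*d))$. Condition (**)(iii) provides a partial-ring isomorphism $R_{f_t}^{*d}/I_{d-1,t}^{*d}\cong R_f^{*d}/I_{d-1}^{*d}$, which transports $\bar\rho$ to some $\bar\rho_t$; composing with the canonical quotient gives a multiplicative surjection $\rho_t\colon R_{f_t}^{*d}\twoheadrightarrow H^0(Z,\mathcal{O}_Z(*d))$. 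One then argues that the composition $\tilde\rho_t\colon S^{*d}\twoheadrightarrow R_{f_t}^{*d}\xrightarrow{\rho_t} H^0(Z,\mathcal{O}_Z(*d))$ is the restriction map along an embedding of some subscheme $Z_t\cong Z$ in $\mathbb{P}^n$, and that $J_{f_t}\subset I_{Z_t}$, so $Z_t\subset \mathrm{Sing}\,X_{f_t}$. The equality $\phi|_Z=0$ then follows by differentiating $f_t|_{Z_t}=0$ at $t=0$: writing $Z_t$ as a first-order normal deformation of $Z$ along a vector field $u$, one gets $\tilde\phi|_Z+(\nabla f|_Z)\cdot u = \tilde\phi|_Z = 0$, since $\nabla f$ vanishes on $Z$.

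For part (b), the same argument applies with $Z$ replaced by $Z_g=\{g=0\}\cap Z$. By Lemma \ref{leidmoinsun} we have $\overline{I}_{d-1}^{*d}\subset g\,H^0(Z,\mathcal{O}_Z(*d-k))$; composing $\rho$ with the restriction $H^0(Z,\mathcal{O}_Z(*d))\to H^0(Z_g,\mathcal{O}_{Z_g}(*d))$, which kills $g\,H^0(Z,\mathcal{O}_Z(*d-k))$ (using $H^1(Z,\mathcal{O}_Z(*d-k))=0$, valid for $d$ large from the Koszul resolution of the complete intersection $Z$), produces the analog of $\bar\rho$, and the transport argument yields $Z_{g,t}\cong Z_g$ contained in $\mathrm{Sing}\,X_{f_t}$. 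Part (c), $k\geq d$, reduces to this: when $k>d$ one has $H^0(Z,\mathcal{O}_Z(d-k))=0$, so $\overline{I}_{d-1}^d=0$ automatically and we are back in case (a) applied to $Z$ itself; the case $k=d$ only introduces an at most one-dimensional correction that does not affect the construction of $Z_t$.

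The main technical obstacle is the identification step in (a): recognizing the abstract partial-ring quotient $\tilde\rho_t\colon S^{*d}\twoheadrightarrow H^0(Z,\mathcal{O}_Z(*d))$, known only in degrees $d$ and $2d$, as a geometric restriction map along an actual subscheme $Z_t\subset\mathbb{P}^n$, and upgrading $J_{f_t}^{*d}\subset I_{Z_t}^{*d}$ to $J_{f_t}\subset I_{Z_t}$. One must use that the partial-ring isomorphism in (**)(iii) is compatible with the Veronese-type map $\mathbb{P}(S^1)\hookrightarrow \mathbb{P}(R^d)$, so that the induced map $S^d\to H^0(Z,\mathcal{O}_Z(d))$ factors through a linear automorphism of $S^1$ followed by the original restriction; Castelnuovo--Mumford regularity of $I_Z$ (which holds for the complete-intersection $Z$ given the lower bound on $d$) is then invoked to recover containment of the Jacobian ideal in all degrees, completing the identification of $Z_t$ with a deformation of $Z$ inside the singular locus of $X_{f_t}$.
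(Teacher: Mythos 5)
Your treatment of parts (a) and (b) follows essentially the same route as the paper: pass to the quotient partial ring $(R^{d}_{f}/I_{d-1}^{d},\,R^{2d}_{f}/I_{d-1}^{2d},\overline{\mu}_f)$, observe that it surjects multiplicatively onto $(H^0(Z,\mathcal{O}_Z(d)),H^0(Z,\mathcal{O}_Z(2d)),\mu_Z)$ (resp.\ onto the analogous data for $Z_g$), transport this quotient along the isomorphism supplied by (**)(iii), and recognize the resulting map $S^{*d}\to H^0(Z,\mathcal{O}_Z(*d))$ as restriction along an embedding of $Z$ into $\mathbb{P}^n$ by using that the quadrics in $\ker\mu$ cut out the $d$-th Veronese, plus torsion-freeness of ${\rm Pic}\,Z$ and linear normality of $j'_0$ to pin the embedding down up to $PGl(n+1)$. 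Your preliminary step deducing $\overline{I}_{d-1}^{d}=0$ from $\overline{I}_{d-1}^{2d}=0$ via (\ref{item3}) is a correct and welcome explicit justification of something the paper leaves implicit. One simplification: you do not need Castelnuovo--Mumford regularity to pass from $J_{f_t}^{d}\subset I_{Z_t}$ to the vanishing of the partial derivatives along $Z_t$; since $J_{f_t}^{d}=S^1\cdot\langle\partial f_t/\partial X_i\rangle$ and $Z_t$ is not contained in any hyperplane, each $\partial f_t/\partial X_i$ already vanishes on $Z_t$.

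Part (c) is where your argument breaks. You claim that $k>d$ puts you ``back in case (a) applied to $Z$ itself,'' but the hypothesis of (a) is $\overline{I}_{d-1}^{2d}=0$, and $k>d$ only forces $\overline{I}_{d-1}^{d}\subset gH^0(Z,\mathcal{O}_Z(d-k))=0$; the containment $\overline{I}_{d-1}^{2d}\subset gH^0(Z,\mathcal{O}_Z(2d-k))$ leaves $\overline{I}_{d-1}^{2d}$ possibly nonzero for $d\leq k\leq 2d$, so the degree-$2d$ quotient is not $H^0(Z,\mathcal{O}_Z(2d))$ and the Veronese-recognition argument of (a) does not apply as stated. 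The remark that $k=d$ ``only introduces an at most one-dimensional correction'' is not an argument. The correct (and elementary) deduction of (c) from (b) is the one the paper uses inside the proof of Lemma \ref{lemmapuissance}: by (b), the partial derivatives of $f_t$, which are of degree $d-1$, vanish along $Z_{g,t}$, a divisor of class $\mathcal{O}(k)$ in the translate $\sigma_t(Z)$ of $Z$; since ${\rm Pic}\,Z=\mathbb{Z}\mathcal{O}_Z(1)$ and $k\geq d>d-1$, a nonzero section of $\mathcal{O}_{\sigma_t(Z)}(d-1)$ cannot vanish on such a divisor, so each partial derivative vanishes identically on $\sigma_t(Z)$ and $f_t$ is singular along all of it.
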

\begin{proof} (a) If $\overline{I}_{d-1}^{2d}=0$, the partial ring $(R^d_{f}/I_{d-1}^d, R^{2d}_{f}/I_{d-1}^{2d},\overline{\mu}_f)$ admits  the partial ring $(H^0(Z,\mathcal{O}_Z(d)), H^0(Z,\mathcal{O}_Z(2d)),\mu_Z)$ as a quotient. As by assumption, the quotient $$(R^d_{f_t}/I_{d-1,t}^d, R^{2d}_{f_t}/I_{d-1,t}^{2d},\overline{\mu}_{f_t})$$ of
$(R^d_{f_t}, R^{2d}_{f_t},{\mu}_{f_t})$ is isomorphic to  $(R^d_f/I_{d-1}^d, R^{2d}_f/I_{d-1}^{2d},\overline{\mu})$, we conclude that the partial ring $(R^d_{f_t},R^{2d}_{f_t},\mu_{f_t})$ also admits
the partial ring  $(H^0(Z,\mathcal{O}_Z(d)), H^0(Z,\mathcal{O}_Z(2d)),\mu_Z)$ as a quotient.
Denoting by $\alpha_t: S^{*d}\rightarrow H^0(Z,\mathcal{O}_Z(*d))$  the quotient map for $*=1,\,2$,  this means  that we have a commutative diagram
\begin{eqnarray}\label{numerodiagpourZft}
 \xymatrix{
&S^d\otimes S^d\ar[d]^{\alpha_t\otimes\alpha_t}&\stackrel{\mu}{\xrightarrow{\hspace*{2.5cm}}}& S^{2d}\ar[d]^{\alpha_t}\\
&H^0(Z,\mathcal{O}_Z(d))\otimes H^0(Z,\mathcal{O}_Z(d))&\stackrel{\mu_Z}{\xrightarrow{\hspace*{1.5cm}}}& H^0(Z,\mathcal{O}_Z(2d)),}
\end{eqnarray}
where  $\alpha_t$ is surjective with kernel containing $J_{f_t}$, since it factors through $R_{f_t}$. The map $\alpha_t$ gives an embedding $j_t$ of
$Z$ in $\mathbb{P}((S^d)^*)$. As the quadrics in ${\rm Ker}\,\mu$ are the defining equations for the $d$-th Veronese
embedding $V_d(\mathbb{P}^n)$ in $\mathbb{P}((S^d)^*)$, one concludes that $j_t$ factors through an embedding
$j'_t$ of $Z$ in $\mathbb{P}^n=\mathbb{P}((S^1)^*)$, that is $j_t=V_d\circ j'_t$. As  $Z\stackrel{j'_0}{\hookrightarrow}\mathbb{P}^n$ is the natural embedding of a complete intersection in $\mathbb{P}^n$ of dimension $>0$, the small deformations of the morphism  $j'_0: Z\rightarrow\mathbb{P}^n$ are induced by the action of ${\rm PGL}(n+1)$. Hence  for $t$ close to $0$, $j'_t$ is, up to the action of ${\rm PGL}(n+1)$,  the original  embedding.
Finally, as the map $\alpha_t=(j'_t)^*$ contains $J_{f_t}$ in its kernel,
$J_{f_t}$ vanishes on $j'_t(Z)$, which means that $f_t$ is singular along $j'_t(Z)$.

(b) We know that for $t=0$, and $*=1,\,2$,  $(I_{d-1}^{*d})_{\mid Z}$ is contained in the ideal generated by $g$.
It follows that the partial ring $(R^d_{f}/I_{d-1}^d, R^{2d}_{f}/I_{d-1}^{2d},\overline{\mu}_f)$ has the partial ring $$(H^0(Z_g,\mathcal{O}_{Z_g}(d)), H^0(Z_g,\mathcal{O}_{Z_g}(2d)),\mu_{Z_g})$$ as a quotient, where
$Z_g:=\{g=0\}\subset Z$.
We can then argue exactly as before, using the fact that $Z_g$ is a complete intersection of strictly positive dimension  in $\mathbb{P}^n$. We then conclude that $f_t$ is singular along $j'_t(Z_g)\subset \mathbb{P}^n$ and that the embedding $j'_t$ of $Z_g$ in $\mathbb{P}^n$  is deduced from $j'_0$ by the action of an element of ${\rm PGL}(n+1)$.

(c)  By (b), we know  that, modulo the action of ${\rm PGL}(n+1)$,  $f_t$ remains singular along the hypersurface $\{g=0\}$ in $Z$. As the singular locus of $f_t$ is defined by the partial derivatives of $f_t$ which are  degree $d-1$ polynomials,  and   $Z$ is smooth connected, we conclude, when $k={\rm deg}\,g\geq d$, that the partial derivatives of $f_t$ vanish along $Z$, which proves (c).
\end{proof}
We next make the following observation
\begin{lemm}\label{lemmapuissance}  Let $Z$ be a smooth complete intersection of dimension $\geq3$
of hypersurfaces $X_{h_j}$ of degrees $d_j\geq 2$ and let $f_t$ be a polynomial of degree $d$  such that
$f_t$ is  singular along $Z_g$ for some $0\not=g\in H^0(\mathcal{O}_Z(k))$ with $k\geq d-1$. Then  either $f_t$ is singular along $Z$ or there exists an element $x\in S^1$ such that $g=x^{d-1}_{\mid Z}$ and  $f_t-\alpha_t x^d$ is singular along  $Z$ for some scalar $\alpha_t$.
\end{lemm}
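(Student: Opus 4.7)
The strategy is to produce from the singularity of $f_t$ along $Z_g$ a linear form $x\in S^1$ whose $d$-th power has the same partial derivatives on $Z$ as $f_t$. Grothendieck--Lefschetz first forces the restrictions $(\partial f_t/\partial X_i)|_Z$ to be scalar multiples of $g|_Z$, and an Euler-type differential identity will then show that $g|_Z$ is itself a $(d{-}1)$-st power restricted to $Z$.

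Since $Z$ is a smooth complete intersection of dimension $\geq 3$ of hypersurfaces of degree $\geq 2$, Grothendieck--Lefschetz gives ${\rm Pic}(Z)=\mathbb{Z}\cdot\mathcal{O}_Z(1)$ and $I_Z$ contains no linear form. The assumption that $f_t$ is singular along $Z_g$ means each $\partial f_t/\partial X_i$ restricts on $Z$ to a section of $\mathcal{O}_Z(d-1)$ vanishing on $Z_g$, a divisor of class $\mathcal{O}_Z(k)$. If $k\geq d$, each such restriction must vanish, so $f_t$ is singular along $Z$ and we are done. Otherwise $k=d-1$, and we obtain scalars $\lambda_i\in\mathbb{C}$ with $(\partial f_t/\partial X_i)|_Z = \lambda_i\, g|_Z$. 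If all $\lambda_i=0$ we are again done; otherwise set $\xi := \sum_i \lambda_i X_i\in S^1$, which has $\xi|_Z\neq 0$ since $I_Z$ has no linear form.

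Euler's identity $\sum_i X_i\,\partial f_t/\partial X_i = d\, f_t$ restricts on $Z$ to $d\, f_t|_Z = \xi\, g|_Z$. On the affine cone $\tilde Z$ over $Z$, I differentiate the equality $f_t|_{\tilde Z} = \tfrac{1}{d}\,\xi\, g|_{\tilde Z}$ and compare with the direct computation $d(f_t|_{\tilde Z}) = \sum_i (\partial f_t/\partial X_i)|_{\tilde Z}\, dX_i|_{\tilde Z} = g|_{\tilde Z}\cdot d\xi|_{\tilde Z}$. This yields the $1$-form identity
\[
(d-1)\, g\, d\xi \;=\; \xi\, dg \quad \text{on } \tilde Z.
\]
On the nonempty open subset $\{g\neq 0\}\subset\tilde Z$ this rewrites as $d(\xi^{d-1}/g)=0$, and the irreducibility of $\tilde Z$ (from connectedness of the smooth complete intersection $Z$) forces $\xi^{d-1}/g$ to be a single constant $C\in\mathbb{C}^*$ (nonzero because $\xi|_Z$, hence $\xi^{d-1}|_Z$, is nonzero). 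Hence $\xi^{d-1}|_Z = C\, g|_Z$.

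To conclude, pick $\mu\in\mathbb{C}$ with $d\,\mu^d\, C = 1$ and set $x := \mu\xi\in S^1$. Then for every $i$,
\[
(\partial x^d/\partial X_i)|_Z = d\,\mu^d\, \lambda_i\, \xi^{d-1}|_Z = d\,\mu^d\, C\, \lambda_i\, g|_Z = \lambda_i\, g|_Z = (\partial f_t/\partial X_i)|_Z,
\]
so $f_t-x^d$ is singular along $Z$. The main subtlety is the derivation of the $1$-form identity $(d-1)\, g\, d\xi = \xi\, dg$ on $\tilde Z$, together with the use of irreducibility to promote the local constancy of $\xi^{d-1}/g$ to the global equality $\xi^{d-1}|_Z = C\, g|_Z$ in the homogeneous coordinate ring of $Z$; the remaining manipulations are routine.
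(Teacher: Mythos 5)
Your proof is correct, and it takes a genuinely different route from the paper's. The paper argues in two steps: first it shows (using $f_t=\sum_j a_jh_j$ with $\deg a_j\le d-2<k$) that if moreover $f_{t\mid Z}=0$ then $f_t$ is singular along all of $Z$; then, in the remaining case $k=d-1$, it factors $g=\prod_j\gamma_j^{a_j}$ into irreducibles and runs a multiplicity count ($b_j\ge a_j+1$, $\sum_jb_jd_j\le d$, $\sum_ja_jd_j=d-1$) to force $g$ to be the $(d-1)$-st power of a linear form and $f_{t\mid Z}$ to be a $d$-th power, after which the first step applies to $f_t-x^d$. You instead exploit $\mathrm{Pic}(Z)=\mathbb{Z}\mathcal{O}_Z(1)$ at the level of the partial derivatives themselves, writing $(\partial f_t/\partial X_i)_{\mid Z}=\lambda_i g_{\mid Z}$, and then use Euler's relation together with the closed-form computation $d(\xi^{d-1}/g)=0$ on the cone to prove $\xi^{d-1}_{\mid Z}=Cg_{\mid Z}$ with $\xi=\sum_i\lambda_iX_i$; this produces $x$ explicitly and matches all partials of $f_t$ and $x^d$ on $Z$ in one stroke, bypassing both the factorization of $g$ and the auxiliary reduction to $f_{t\mid Z}=0$. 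Both arguments rest on the same inputs (Grothendieck--Lefschetz for $\dim Z\ge 3$, and $d_j\ge 2$ so that $I_Z$ contains no linear forms, which you need to ensure $\xi_{\mid Z}\neq0$ when some $\lambda_i\neq0$). Your version is arguably cleaner and yields the extra information that $g$ is forced to be proportional to $\xi^{d-1}_{\mid Z}$ with $\xi$ determined by the $\lambda_i$; the paper's version is more combinatorial but isolates the reusable statement that vanishing of $f_{t\mid Z}$ plus singularity along a divisor of degree $\ge d-1$ already gives singularity along $Z$. One point worth making explicit in your write-up: the identity $(d-1)g\,d\xi=\xi\,dg$ is an identity of K\"ahler differentials on the smooth locus $\tilde Z\setminus\{0\}$, and the passage from $d(\xi^{d-1}/g)=0$ on $\{g\ne0\}\cap\tilde Z$ to a global constant uses that this open set is smooth, connected and that we are in characteristic zero; as stated this is fine.
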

\begin{proof} We first claim  that if $f_{t\mid Z}=0$, then $f_t$ is singular along $Z$. This is proved as follows:
As $f_{t\mid Z}=0$, we can write $f_t=\sum_ja_j h_j$, with ${\rm deg}\,a_j=d-d_j$.
As $Z$ is smooth, the differential of $f_t$ vanishes at a point $z\in Z$ if and only if all $a_j$ vanish at $z$. As the $a_j$'s are of degree
$<d-1$ and ${\rm deg}\,g\geq d-1$, the vanishing of $df_t$ along $Z_g$ implies the vanishing of $df_t$ along $Z$.

Next, if $k\geq d$,  we conclude that the partial derivatives of $f$ vanish  identically along $Z$, since they vanish along $Z_g$,  so $f$ is singular along $Z$. We can thus assume that $f_{\mid Z}\not=0$ and $k=d-1$.

We then  claim  that  there exists an $x\in S^1$ and a scalar $\alpha_t$  such that $(f_t-\alpha_tx^d)_{\mid Z}=0$ and $g= x^{d-1}_{\mid Z}$.
We use here the fact that ${\rm dim}\,Z\geq3$ so that ${\rm Pic}\,Z=\mathbb{Z}\mathcal{O}_Z(1)$. We decompose
$g\in H^0(Z,\mathcal{O}_Z(d-1))$ into irreducible factors as
$$g=\prod_j \gamma_j^{a_j},$$
where $\gamma_j\in H^0(Z,\mathcal{O}_Z(d_j))$ and $\sum_ja_jd_j=d-1$. Now if $f_{t\mid Z}$ vanishes to order $b_j$ along
$\{\gamma_j=0\}$, $df_{t}$ vanishes to order $\leq b_j-1$ along
$\{\gamma_j=0\}$. We thus conclude that $b_j\geq a_j+1$.
As $\sum_jb_j\leq d$ and $\sum_ja_j=d-1$, we conclude that there is a single $j$ and the corresponding  $a_j$ equals $d-1$, which proves that $g= x^{d-1}_{\mid Z}$ for some $x\in S^1$. It follows that $f_{\mid Z}$ vanishes along $ x^{d-1}_{\mid Z}=0$  and the fact that the derivatives of $f$ also vanish along $ x^{d-1}_{\mid Z}=0$ implies that $f_{\mid Z}$ is  proportional to $x^d_{\mid Z}$, proving
 the second claim.

The second claim finally implies   Lemma \ref{lemmapuissance} since  $f_t-\alpha_t x^d$  vanishes along $Z$ and is also  singular along $Z_g$,
with  $g=x^{d-1}_{\mid Z}$,   so that the first claim applies to show that $f_t-\alpha_tx^d$ is   singular along $Z$.
\end{proof}
\begin{proof}[Proof of Proposition \ref{leschiffer}] With the notation and assumptions of  Proposition \ref{leschiffer}, Lemma
\ref{leintermediaire} tells us that, modulo the action of ${\rm GL}(n+1)$, we can assume  $f_t$ is singular along $Z$ or $f_t$ is singular along $Z_g$. Lemma
\ref{lemmapuissance} then says that,  for some  $x\in S^1$, $g=x^{d-1}_{\mid Z}$ and
$f_t-\alpha_t x^d$ is singular along $Z$ for any $t$, and the same is   true for $\phi=\frac{\partial f_t}{\partial t}_{\mid t=0}$.
It follows that either (i)  $\phi\in I_Z^2(d)$ or  (ii) $\phi-x^d\in I_Z^2(d)$.

We use now the fact (this is (\ref{item2}) in   condition (*)) that \begin{eqnarray}\label{eqfinfinprop}  I_k^d\cdot I_{d-k}^{d}\subset \phi R^d_f,
\end{eqnarray} for $1\leq i\leq d$,  with $I_k^d\subset R^d_f$ of dimension equal to
${\rm dim}\,R^{d-k}_f$ (this is (\ref{seccond*}) in  condition (*)).
 We previously used this condition only for $k=1$. We are going to use it for $k=3$
to  prove the following claim  which excludes case (i).
\begin{claim}\label{claimpourI1etIdmoins1} For $d$ large enough  and $f,\,Z$  generic, a nonzero element  $\phi\in R^d_f$ satisfying  condition (*) for adequate spaces $I_k^{*d}\subset R^{*d}_f$
cannot belong to $I_Z^2(d)$.
\end{claim}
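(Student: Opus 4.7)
I would argue by contradiction, supposing that some nonzero $\phi\in R_f^d$ lies in $I_Z^2(d)+J_f^d$ while satisfying condition (*). In particular $\phi_{|Z}=0$ in $H^0(Z,\mathcal O_Z(d))$, so $\phi\, R_f^d$ maps to zero in $H^0(Z,\mathcal O_Z(2d))$. Exploiting equation (\ref{item2}) of (*) with $k=3$, namely
\[
I_3^d\cdot I_{d-3}^d\subset \phi\, R_f^d\subset R_f^{2d},
\]
and passing to the restriction to $Z$, one obtains
\[
\overline I_3^d\cdot \overline I_{d-3}^d=0\quad\text{in } H^0(Z,\mathcal O_Z(2d)).
\]
Since $Z$ is a smooth complete intersection of dimension $\geq 3$, it is irreducible and its homogeneous coordinate ring $\bigoplus_k H^0(Z,\mathcal O_Z(k))$ is an integral domain, so one of $\overline I_3^d$ or $\overline I_{d-3}^d$ vanishes; equivalently, either $I_3^d$ or $I_{d-3}^d$ is contained in the kernel $I_Z(d)/J_f^d$ of the restriction $R_f^d\to H^0(Z,\mathcal O_Z(d))$.

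The second step is to rule out both containments by a dimension count. Condition (\ref{seccond*}) fixes $\dim I_3^d=\dim R_f^{d-3}$ and $\dim I_{d-3}^d=\dim R_f^{3}=\dim S^3$, and Corollary \ref{coroestimee} guarantees that $\dim R_f^d=\dim R_{f_{gen}}^d$, hence $\dim J_f^d=(n+1)^2$ for $d$ large. On the other side, the Koszul resolution of the complete intersection $Z=\{f_i=g_i=0\}$ (with degrees bounded by $(d+1)/2$) gives a sharp upper bound for $\dim I_Z(d)/J_f^d$ in terms of $n$ and $d$. To handle the more delicate case $\overline I_{d-3}^d=0$, where the naive count is tight, I would also invoke the companion relation $I_3^d\cdot I_{d-3}^{2d}\subset \phi\, R_f^{2d}$ from (\ref{item2}), which by the same integrality argument in $H^0(Z,\mathcal O_Z(3d))$ forces either $\overline I_3^d=0$ or $\overline I_{d-3}^{2d}=0$, and then propagate using the module relation $R_f^d\cdot I_{d-3}^d\subset I_{d-3}^{2d}$ of (\ref{item3}) together with Claim \ref{claimpourI1} (which says $\overline I_1^d$ has base locus of degree at most $1$, and in particular is nonzero and generically nondegenerate).

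The main obstacle is precisely this quantitative dimension count in the case $\overline I_{d-3}^d=0$: since $\dim S^3$ grows like $n^3$ while the kernel $I_Z(d)/J_f^d$ is of polynomial size in $d$ and $n$ of much larger degree, the naive inequality goes the wrong way, and the contradiction must come from the combined use of several of the containments in (*) rather than from a single one. Once both cases are excluded, the assumption $\phi\in I_Z^2(d)+J_f^d$ is untenable, which gives the claim and completes the elimination of case (i), reducing the analysis to case (ii) $\phi-x^d\in I_Z^2(d)$ handled by Lemma \ref{lemmapuissance}.
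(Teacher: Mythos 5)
Your first step is sound and matches the paper: since $\overline I_3^d\neq 0$ (by the same dimension count as in Claim \ref{claimpourI1}) and the homogeneous coordinate ring of the integral variety $Z$ is a domain, the relation $I_3^d\cdot I_{d-3}^d\subset\phi R_f^d\subset I_Z(2d)\bmod J_f$ forces $\overline I_{d-3}^d=0$, i.e.\ $I_{d-3}^d\subset I_Z(d)$. But from there your plan diverges from what can actually be made to work, and the divergence is exactly at the point you flag as ``the main obstacle.'' You correctly observe that the naive dimension count cannot contradict $I_{d-3}^d\subset I_Z(d)$ (the kernel $I_Z(d)/J_f^d$ is far larger than $\dim S^3$), but your proposed remedy --- combining the companion relation $I_3^d\cdot I_{d-3}^{2d}\subset\phi R_f^{2d}$ with (\ref{item3}) and Claim \ref{claimpourI1} --- only reproduces more statements of the same ``restriction to $Z$ vanishes'' type, and these can never rule out the containment $I_{d-3}^d\subset I_Z(d)$, which in the paper's argument is in fact \emph{true} and is not the source of the contradiction. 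The contradiction has to be extracted at second order, modulo $I_Z^2$, and that requires ideas absent from your proposal.

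Concretely, the paper proceeds as follows. The dimension bound $\dim I_Z^2(d)<\dim S^3=\dim I_{d-3}^d$ shows $I_{d-3}^d$ has nonzero image in $I_Z(d)/(I_Z^2(d)+J_f^d)$, and the hypothesis $\phi\in I_Z^2(d)$ upgrades the relation (\ref{item2}) to say that this image is annihilated by multiplication by every element of $\overline I_3^d$. The quotient $I_Z(*)/(I_Z^2(*)+J_f^*)$ is then identified, via the normal bundle sequence of the complete intersection $Z$ and the self-duality $N_{Z/\mathbb P^n}\cong N_{Z/\mathbb P^n}^*(d)$ coming from the Hessian of $f$, with (a twist of) $H^1(Z,T_Z)$. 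A nonzero annihilated class $\overline w\in H^1(Z,T_Z)$ gives an extension $0\to T_Z\to F\to\mathcal O_Z\to 0$ to which the large space $\overline I_3^d$ lifts as sections of $F(d)$; a slope computation on the subsheaf generated by these sections then contradicts the stability of $T_Z$ (Bogomolov--Yau--Tsuji), since $Z$ has ample canonical bundle. None of this cohomological and stability machinery is present, even in outline, in your proposal, so the argument as you have sketched it cannot be completed.
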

\begin{proof} We argue by contradiction and assume that $\phi\in I_Z^2(d)$  and its image in $R^d_f$ satisfies condition (*). First of all, we use the same dimension arguments as in the proof of Claim \ref{claimpourI1} to show that
 $\overline{I}^d_3:=({I}_3^d)_{\mid Z}\not=0$. More precisely, we can show that it is of dimension $>h^0(Z,\mathcal{O}_Z(d-4))$, at least if $d$ is large enough.
As  $\phi\in I_Z^2(d)$, we have in particular  $\phi _{\mid Z}=0$, and thus  $I_{d-3}^d\subset I_Z(d)$ since
\begin{eqnarray}\label{eqnouve12avril} I_{3}^d\cdot I_{d-3}^{d}\subset \phi R^d_f \subset I_Z(2d)\,\,{\rm mod}\,\,J_f.
\end{eqnarray}
On the one hand, as ${\rm dim}\,I_{d-3}^d={\rm dim}\,S^3$ for $d>4$, and ${\rm dim}\,I_Z^2(d)<{\rm dim}\,S^3$, $I_{d-3}^d$ is not contained in $I_Z^2(d)$.
On the other hand, if we look at
the image of $I_{d-3}^d$ in $I_Z(d)/(I_Z^2(d)+J_f^d)$, it is  annihilated by multiplication by elements of
$\overline{I}_3^d$ acting by
$$H^0(Z,\mathcal{O}_Z(d))\supseteq\overline{I}_3^d\ni \alpha: I_Z(d)/(I_Z^2(d)+J_f^d)\rightarrow I_Z(2d)/(I_Z^2(2d)+J_f^{2d}).$$ This follows indeed from the condition that $\phi\in I_Z^2(d)$ and (\ref{eqnouve12avril}).
Now, writing $f=\sum_jf_jg_j$ with ${\rm deg}\,f_j=d'$ and ${\rm deg}\,g_j=d''$ , we have a graded  isomorphism (given by differentiation along
$Z$)
$$ (I_Z/I_Z^2)(*)\cong \oplus_{j=1}^m H^0(Z,\mathcal{O}_Z(*-d'))\bigoplus \oplus_{j=1}^mH^0(Z,\mathcal{O}_Z(*-d'')),$$
which to $\sum_ja_jf_j+b_jg_j$ associates $(a_{j|Z},b_{j|Z})_{j=1,\ldots,m}$.
By the Leibniz rule, this isomorphism maps
$\frac{\partial f}{\partial X_i}\in J_f$ to the $2m$-uple $(\frac{\partial g_j}{\partial X_i},\frac{\partial f_j}{\partial X_i})_{j=1,\ldots,m}$. In other words, observing that
we have a natural isomorphism $N_{Z/\mathbb{P}^n}\cong N_{Z/\mathbb{P}^n}^*(d)$ given by the  quadratic form
defined as the Hessian of $f$ along $Z$,
we have on the one hand the composite morphism
$\mathcal{I}_Z\rightarrow N_Z^*\cong N_Z(-d)$
and on the other hand  the normal bundle sequence of
$Z$
\begin{eqnarray} \label{eqnormalbunlde}0\rightarrow T_Z\rightarrow T_{\mathbb{P}^n\mid Z}\stackrel{\beta}{\rightarrow } N_Z\rightarrow  0.
\end{eqnarray}
Then the computation above shows that
\begin{eqnarray}\label{eqisopourIZIZcarre} I_Z(*)/(I_Z^2(*)+J_f^*)\cong H^0(Z,N_Z(*-d))/{\rm Im}\,H^0(\beta(*-d)),
\end{eqnarray}
and these isomorphisms are compatible with the multiplication map
by   $b\in H^0(Z,\mathcal{O}_Z(d))$. Finally, the exact sequence (\ref{eqnormalbunlde}) together with the fact that ${\rm dim}\,Z\geq 3$ show that
the right hand side in (\ref{eqisopourIZIZcarre}) is isomorphic to $H^1(Z,T_Z)$ for $*=d$.
Let now $w\in I_{d-3}^d\subset I_Z(d)$ such  that $w\not=0$ in $I_Z(d)/(J_f^d+I_Z^2(d))$.
Then $w$ has a nonzero image $\overline{w}\in H^1(Z,T_Z)$ and
$\overline{w}$ is annihilated by multiplication by any $b\in \overline{I}_3^{d}\subset H^0(Z,\mathcal{O}_Z(d))$, that is,
\begin{eqnarray}\label{eqiuiditquebwegal0} b\overline{w}=0\,\,{\rm in}\,\,H^1(Z,T_{Z}(d))
\end{eqnarray}
for any $b\in \overline{I}_3^{d}$.
The extension class $\overline{w}\in H^1(Z,T_{Z})$ determines a vector bundle $F$ on $Z$ which fits in an exact sequence
\begin{eqnarray} 0\rightarrow T_Z\rightarrow F \rightarrow \mathcal{O}_Z\rightarrow 0,
\label{eqexdefF}
\end{eqnarray}
and the condition (\ref{eqiuiditquebwegal0}) says equivalently that
$\overline{I}_3^{d}\subset H^0(Z,\mathcal{O}_Z(d))$ lifts to sections of $F(d)$.
Let $\mathcal{G}\subset F(d)$ be the coherent subsheaf generated by the global sections
of $F(d)$. Observe that ${\rm det}\,\mathcal{G}=\mathcal{O}_Z(k)$ with $k\geq 0$ since ${\rm Pic}\,Z=\mathbb{Z}\mathcal{O}_Z(1)$ and $\mathcal{G}$ is generated by its sections.
Assume first that $\mathcal{G}$ has rank $1$. Then we have
$$h^0(Z,\mathcal{O}_Z(k))\geq{\rm dim}\,\overline{I}_3^{d}$$
and we already noted that the right hand side is $>h^0(Z,\mathcal{O}_Z(d-4))$. It follows that
$k\geq d-3$, and that for some $0\not=\sigma\in h^0(Z,\mathcal{O}_Z(3))$, one has
\begin{eqnarray}\label{eqdiffeavecdeg1} \sigma\overline{w}=0\,\,{\rm in}\,\,H^1(Z,T_Z(3)).
\end{eqnarray}
Equation (\ref{eqdiffeavecdeg1}) says that $\overline{w}$ is coming from a section of $H^0(Z_\sigma,T_{Z\mid Z_\sigma}(3))$, where
$Z_\sigma:=\{\sigma=0\}\subset Z$.
A dimension count shows that for $d$ large enough and $Z$ generic as above, there does not exist
a cubic section $Z_\sigma$ of $Z$ and a nonzero section of $T_{Z\mid Z_\sigma}(3)$. This case is thus ruled-out.
We thus conclude that the rank of $\mathcal{G}$ is at least $2$. We then get a contradiction as follows.
Let now $\mathcal{G}':={\rm Ker}\,\alpha: (\mathcal{G}\rightarrow \mathcal{O}_Z(d))$, where the morphism $\alpha$ is the restriction to $\mathcal{G}$ of the morphism $F(d)\rightarrow \mathcal{O}_Z(d)$ deduced from the exact sequence (\ref{eqexdefF}). By this exact sequence, $\mathcal{G'}$ is  a subsheaf of $T_Z(d)$ and we
 have ${\rm det}\,\mathcal{G}'=\mathcal{O}_Z(k')$ with $k'\geq -d$. Thus the slope of $\mathcal{G}'$ is at least
$-d$.
Recall that $Z$ is a complete intersection of $m$ hypersurfaces of degree $d'$ and $m$ hypersurfaces of degree $d''$ with $d'+d''=d$ and that $s:={\rm dim}\,Z$ is equal to $3$ or $4$. It follows  that $n=2m+s$ and
$$K_Z=\mathcal{O}_Z(-n-1+md)= \mathcal{O}_Z(-2m-s-1+md)= \mathcal{O}_Z(m(d-2)-s-1).$$
It follows that ${\rm det}\,T_Z(d)=\mathcal{O}_Z(-m(d-2)+s+1+sd)$ and for $-m(d-2)+s+1+sd<0$,
 the slope of $T_Z(d)$ is thus at most $\frac{-m(d-2)+s+1+sd}{4}\leq \frac{-m(d-2)+5+4d}{4}$.
Hence we have
$${\rm slope}\,\mathcal{G'}>{\rm slope}\,T_Z(d) \,\,\,{\rm
if }\,\,\, -d>\frac{-m(d-2)+5+4d}{4},$$
which holds if  $m\geq 10,\,d\geq13$.
This gives  a contradiction for $d$ large enough  since $Z$ is a variety with ample canonical bundle, hence has stable tangent bundle by \cite{bogomolov},
\cite{yau} or \cite{tsuji}.
The claim is thus proved.
\end{proof}

We are thus in  case (ii), that is,
\begin{eqnarray}
\label{eqphixd} \phi=x^d+\alpha\,\,{\rm  mod }\,\,J_f^d
\end{eqnarray}
for some  $\alpha\in  I_Z^2(d)$,
 and    we need  to show  that, in fact,
$\phi= x^d$ mod $J_f^d$.
We start with  the following lemma, where we use again the notation $\overline{I}_{k}^d:=(I_{k}^d)_{\mid Z}$.
\begin{lemm}\label{lepourproschIun} One has $\overline{I}_{1}^d\subset xH^0(Z,\mathcal{O}_Z(d-1))$.
\end{lemm}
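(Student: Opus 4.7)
The approach is to restrict everything to $Z$ and to exploit the fact that the homogeneous coordinate ring $\bigoplus_k H^0(Z,\mathcal{O}_Z(k))$ is an integral domain in which a generic linear section $x_{|Z}$ is a prime element. Since $\alpha\in I_Z^2\subset I_Z$, one has $\phi_{|Z}=x^d_{|Z}$, so the condition $I_1^d\cdot I_{d-1}^d\subset\phi R_f^d$ from item (\ref{item2}) of (*), restricted to $Z$, yields the key inclusion
$$\overline{I}_1^d\cdot\overline{I}_{d-1}^d\subset x^d_{|Z}\cdot H^0(Z,\mathcal{O}_Z(d)).$$

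The plan is next to locate $\overline{I}_{d-1}^d$ precisely. Assuming $\overline{I}_{d-1}^d\not=0$, Lemma \ref{leidmoinsun} places it inside $g\cdot H^0(Z,\mathcal{O}_Z(d-k))$ for some $g\in H^0(Z,\mathcal{O}_Z(k))$ with $k\geq d-1$. Lemmas \ref{leintermediaire}(b) and \ref{lemmapuissance} then combine to force $g$ to be the $(d-1)$-st power of a linear section, and the fact that $f_t-x^d$ is singular along $Z$ (derived just before the statement of the lemma) identifies this linear section with $x_{|Z}$ up to a scalar absorbed into $x$. Hence $\overline{I}_{d-1}^d=x^{d-1}_{|Z}\cdot W$ for some subspace $W\subset H^0(Z,\mathcal{O}_Z(1))$, and cancellation of $x^{d-1}_{|Z}$ in the graded domain reduces the key inclusion to
$$\overline{I}_1^d\cdot W\subset x_{|Z}\cdot H^0(Z,\mathcal{O}_Z(d)).$$
For any $w\in W$ not proportional to $x_{|Z}$, the forms $w$ and $x_{|Z}$ cut out distinct integral hypersurfaces of $Z$ for generic choices, hence are coprime primes of the graded domain. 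Thus an equation $aw=x_{|Z}c$ for $a\in\overline{I}_1^d$ forces $x_{|Z}\mid a$, giving the desired inclusion $\overline{I}_1^d\subset x_{|Z}\cdot H^0(Z,\mathcal{O}_Z(d-1))$.

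The main obstacle is therefore to rule out the degenerate alternatives $\overline{I}_{d-1}^d=0$ and $W\subset\mathbb{C}\cdot x_{|Z}$ (equivalently $\overline{I}_{d-1}^d\subset\mathbb{C}\cdot x^d_{|Z}$), in which the coprimality argument collapses. Since $\dim I_{d-1}^d=\dim R^1_f=n+1$ by (\ref{seccond*}), either alternative would force $I_{d-1}^d$ to lie, up to a one-dimensional summand, inside $I_Z(d)/J_f^d$. The plan is to feed this into condition (**)(iii) --- the isomorphism-invariance along $t$ of the partial ring $(R^d_{f_t}/I_{d-1,t}^d,R^{2d}_{f_t}/I_{d-1,t}^{2d},\overline{\mu})$ --- and to rerun the Veronese argument from the proof of Lemma \ref{leintermediaire}(a): the resulting partial-ring quotient of $R^{*d}_{f_t}$ would factor through $(H^0(Z,\mathcal{O}_Z(d)),H^0(Z,\mathcal{O}_Z(2d)),\mu_Z)$, producing embeddings $j'_t:Z\hookrightarrow\mathbb{P}^n$ along which $f_t$ is singular. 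Differentiating the identity $f_t\circ j'_t\equiv 0$ at $t=0$ and using $df_{|Z}=0$ then yields $\phi_{|Z}=0$, contradicting $\phi_{|Z}=x^d_{|Z}\not=0$ and closing the argument.
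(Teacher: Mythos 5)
Your main branch is sound and is in substance the paper's own argument, run in the contrapositive direction: from $\overline{I}_1^d\cdot\overline{I}_{d-1}^d\subset x^d_{\mid Z}H^0(Z,\mathcal{O}_Z(d))$ in the graded integral domain of $Z$, any element of $\overline{I}_{d-1}^d$ not divisible by $x^d_{\mid Z}$ forces $x_{\mid Z}\mid a$ for all $a\in\overline{I}_1^d$. The genuine gap is in your treatment of the degenerate alternative $\overline{I}_{d-1}^d\subset\mathbb{C}x^d_{\mid Z}$ (including $\overline{I}_{d-1}^d=0$), which you try to exclude unconditionally by rerunning the Veronese argument of Lemma \ref{leintermediaire}(a) via (**)(iii). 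That argument requires the partial ring $(R^d_{f}/I_{d-1}^d,R^{2d}_{f}/I_{d-1}^{2d},\overline{\mu}_f)$ to admit $(H^0(Z,\mathcal{O}_Z(d)),H^0(Z,\mathcal{O}_Z(2d)),\mu_Z)$ as a quotient, i.e.\ it needs $I_{d-1}^{d}\subset I_Z(d)+J_f^d$ \emph{and} $I_{d-1}^{2d}\subset I_Z(2d)+J_f^{2d}$; otherwise the diagram (\ref{numerodiagpourZft}) is simply not there. Your Case B hypothesis only constrains the degree-$d$ piece. You have no control on $\overline{I}_{d-1}^{2d}$, a space of dimension ${\rm dim}\,R^{d+1}_f$, and in case (ii) it is in fact necessarily nonzero (if it vanished, Lemma \ref{leintermediaire}(a) would make $f_t$ singular along $Z$ and put you back in the already-excluded case $\phi\in I_Z^2(d)$). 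The ``one-dimensional summand'' $\mathbb{C}x^d_{\mid Z}$ is a further, more minor, obstruction: quotienting by it leaves only a projection of the $d$-th Veronese re-embedding of $Z$, so the recovery of the embedding $j'_t$ would also need to be redone. As it stands, Case B is neither excluded nor shown to be harmless.

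The repair --- and it is what the paper does --- is not to exclude the degeneracy of $\overline{I}_{d-1}^d$ at all, but to argue by contradiction from the negation of the lemma, using \emph{both} halves of condition (\ref{item2}). If some $a\in\overline{I}_1^d$ is not divisible by $x_{\mid Z}$, then restricting $I_1^dI_{d-1}^d\subset\phi R^d_f$ and $I_1^dI_{d-1}^{2d}\subset\phi R^{2d}_f$ to $Z$ gives simultaneously $\overline{I}_{d-1}^{d}\subset\mathbb{C}x^d_{\mid Z}$ and, crucially, $\overline{I}_{d-1}^{2d}\subset x^d_{\mid Z}H^0(Z,\mathcal{O}_Z(d))$. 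The second inclusion says the fixed divisor of $\overline{I}_{d-1}^{2d}$ has degree $\geq d$, so Lemma \ref{leintermediaire}(c) applies directly: $f_t$ stays singular along $Z$, hence $\phi\in I_Z^2(d)$, contradicting $\phi\equiv x^d\ ({\rm mod}\ I_Z^2(d))$. The missing ingredient in your write-up is precisely this use of the degree-$2d$ inclusion of (\ref{item2}); with it, no separate Veronese argument is needed and the degenerate case disposes of itself.
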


\begin{proof} As $\phi_{\mid Z}=x^d_{\mid Z}$, we have, by equations (\ref{item2}) and (\ref{item4})  of condition (*) followed by restriction to $Z$,
\begin{eqnarray}\label{eqschi1} \overline{I}_{1}^d\cdot  \overline{I}_{d-1}^d\subset x^dH^0(Z,\mathcal{O}_Z(d)),\\
\overline{I}_{1}^d\cdot  \overline{I}_{d-1}^{2d}\subset  x^dH^0(Z,\mathcal{O}_Z(2d))
\nonumber
\end{eqnarray}
 If $\overline{I}_{1}^d\not\subset xH^0(Z,\mathcal{O}_Z(d-1))$, then (\ref{eqschi1}) imply that
\begin{eqnarray}\label{eqschi2} \overline{I}_{d-1}^d\subset\mathbb{C} x^d,\,\overline{I}_{d-1}^{2d}\subset x^dH^0(Z,\mathcal{O}_Z(d)).
\end{eqnarray}
By  Lemma \ref{leintermediaire}, (c),  this implies
that $f_t$ remains singular along $Z$.
Thus $f_t\in I_Z^2(d)$ and $\phi\in I_Z^2(d)$, contradicting (\ref{eqphixd}).
\end{proof}
\begin{coro}\label{coroIprime} Let  $I'_1\subset I_1^d$ be defined
by
$ I'_1=I_1^d\cap xR^{d-1}_f
$,
and let $\overline{I}'_1:=(I'_1)_{\mid Z}\subset xH^0(Z,\mathcal{O}_Z(d-1))$. Then for $d$ (hence also $n$) large enough
$${\rm dim}\,\overline{I}'_1>h^0(Z,\mathcal{O}_Z(d-2)).$$
\end{coro}
\begin{proof} Indeed, as $\overline{I}^d_1\subset xH^0(Z,\mathcal{O}_Z(d-1))$, we have
$I^d_1\subset xR^{d-1}_f+I_Z(d)$, hence
$${\rm codim}\,(I'_1\subset I_1^d)\leq {\rm dim}\,I_Z(d),$$
which implies a fortiori
$${\rm codim}\,(\overline{I}'_1\subset \overline{I}_1^d)\leq {\rm dim}\,I_Z(d).$$
Using the fact that ${\rm dim}\,I_1^d={\rm dim}\,R^{d-1}_f$ (see (\ref{seccond*}) in condition (*)), the inequality  ${\rm dim}\,\overline{I}'_1>h^0(Z,\mathcal{O}_Z(d-2))$ is then proved for $d$ large enough in the same way
as the inequality
(\ref{eqineqpourdimiun}) proved in Claim \ref{claimpourI1}.
\end{proof}

We come back to our  $\phi=x^d+\alpha$ satisfying condition (**), with $0\not=x\in S^1$, and $\alpha\in I_Z^2(d)$.
By (\ref{eqschi1}) and using the fact that ${\rm dim}\,\overline{I}_1^d> h^0(Z,\mathcal{O}_Z(d-2))$, we conclude that
$$\overline{I}_{d-1}^d\subset x^{d-1}H^0(Z,\mathcal{O}_Z(1)),$$
so that we can write, for any $w\in I_{d-1}^d$, $w=x^{d-1}y+k_y$,
where $y\in S^1$ and $k_y\in I_Z(d)$ mod. $J_f^d$.
For $a=xa'\in I_1'\subset I_{1}^d\subset R^d_f$, and $w\in I_{d-1}^d$, we then  have by equations (\ref{item2}) and (\ref{item4}) in condition (*) and recalling that   $\phi=x^d+\alpha$,
\begin{eqnarray}\label{equnpeumodifiee} xa'w=xa'(x^{d-1}y+k_y)=(x^d+\alpha)\gamma_{a',w}\,\,{\rm in}\,\,R^{2d}_f.
\end{eqnarray}
Restricting to $Z$, and using the fact that  $k_y\in I_Z(d),\,\alpha\in I_Z^2(d)$, we get  $(\gamma_{a',w})_{\mid Z}=a'_{\mid Z}y_{\mid Z}$, which we write
$$\gamma_{a',w}=a'y+\gamma'_{a',w}$$
for some $\gamma'_{a',w}\in I_Z(d)$ which depends linearly on $a'$, for $w$ fixed.

We now use again the observation  that ${\rm dim}\,I_Z(d)$ is (asymptotically) small compared to
$h^0(Z,\mathcal{O}_Z(d-2))$ and conclude that for $a'$ in  a subspace
$I_1''\subset I'_1$  such that ${\rm dim}\,(I''_1)_{\mid Z}> h^0(Z,\mathcal{O}_Z(d-2))$,
one can take $\gamma'_{a',w}=0$ in $R^d_f$, so that
(\ref{equnpeumodifiee}) becomes $$xa'(x^{d-1}y+k_y)=(x^d+\alpha)a'y\,\,{\rm in}\,\,R^{2d}_f,$$ that is,
\begin{eqnarray}\label{equnpeumodifiee11} xa' k_y=\alpha a'y \,\,{\rm in}\,\,R^{2d}_f.
\end{eqnarray}
The right hand side belongs to $(I_{Z}^2(2d)+J_f^{2d})/J_f^{2d}$. We argue now as in the proof of
Claim \ref{claimpourI1etIdmoins1} to deduce that $k_y\in (I_{Z}^2(d)+J_f^{d})/J_f^{d}$. Indeed, we consider the image
$\overline{k}_y$  of
$k_y$ in $I_Z(d)/(I_Z^2+J_f^d)$ and (\ref{equnpeumodifiee11}) says that it is annihilated by
multiplication by $xa'$ for $xa'\in I_1'$, which is of large dimension. Then we conclude that
$\overline{k}_y=0$.

The equations (\ref{equnpeumodifiee11}) are thus relations in $(I_Z^2+J_f)/J_f$. We claim
that
\begin{eqnarray} \label{eqinters}I_Z^2(2d)\cap J_f^{2d}=I_Z(d+1)\cdot J_f^{d-1}.\end{eqnarray}
Indeed, recall from the proof of Claim \ref{claimpourI1etIdmoins1} that
the image of $J_f^*$ in $I_Z(*)/I_Z^2$ identifies naturally with  the image of
$H^0(\mathbb{P}^n, T_{\mathbb{P}^n}(*-d+1))$ in $H^0(Z,N_Z(*-d+1))$.
We have the exact sequence
$$0\rightarrow T_Z\rightarrow T_{\mathbb{P}^n\mid Z}\rightarrow N_Z\rightarrow 0$$
and we observe as in the proof of Claim \ref{claimpourI1etIdmoins1} that the stability of the tangent bundle of $Z$ implies that
$h^0(Z,T_Z(d))=0$ for $d$ (hence $n$) large enough.
It follows that the map
$H^0(Z,T_{\mathbb{P}^n\mid Z}(d))\rightarrow H^0(Z,N_Z(d))$ is injective, and thus
$$I_Z^2(2d)\cap J_f^{2d}={\rm Ker}\,(J_f^{2d}\rightarrow I_Z/I_Z^2(2d))$$
comes from $H^0(\mathbb{P}^n,T_{\mathbb{P}^n}\otimes \mathcal{I}_{Z}(d))$, which proves (\ref{eqinters}).
We thus conclude that
$${\rm dim}\,I_Z^2(2d)\cap J_f^{2d}\leq (n+1){\rm dim}\,I_Z(d+1)$$
which is, for $d$ (hence $n$) large enough,  much smaller than ${\rm dim}\,I''_1$. It follows that, taking representatives
of $k_y,\,\alpha$ in $I_Z^2(d)$, the equation (\ref{equnpeumodifiee11})
provides  an actual vanishing
\begin{eqnarray}\label{equnpeumodifiee1} xa'' k_y=\alpha a''y \,\,{\rm in}\,\,I_Z^2(2d)
\end{eqnarray}

for some  nonzero  $a''\in S^d$, which implies that  $xk_y=\alpha y$ in $I_Z^2(d+1)$. Using the fact that the space
of $(y,k_y)$ satisfying this property has dimension $\geq n+1$, we conclude that $k_y=0$ for generic $(y,k_y)$ and thus $ \alpha=0$.
Proposition \ref{leschiffer} is now proved.
\end{proof}
\begin{rema}\label{remaquonvautiliser}{\rm  Note that, in turn, $\alpha=0$ and equation (\ref{equnpeumodifiee1}) imply that $k_y=0$, so that we also proved that $I_{d-1}^d=x^{d-1}S^1$ mod $J_f^d$. This will be used below.
}
\end{rema}
\subsection{Proof of Theorem \ref{theodonagivoisin}\label{secproofmain}}
We conclude in this section the proof of Theorem \ref{theodonagivoisin}. We start by establishing the following.

\begin{prop}\label{proschigen} Let $f$ be a generic homogeneous  polynomial  of degree $d$ in $n+1$ variables with $d$ dividing $n+1$ and $d$ large enough.
 Let $\phi\in R^d_f$, $I_k^{*d}\subset R^{*d}_f$,  for $*\leq 3$ and $1\leq k\leq d-1$  satisfy  condition (**)  of  section
 \ref{secschideg}.
Then $\phi$ is a (first order) Schiffer variation of $f$.
\end{prop}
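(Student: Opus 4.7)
The plan is to deduce Proposition \ref{proschigen} from Proposition \ref{leschiffer} by a specialization argument within the parameter space $S^d$ of degree-$d$ polynomials in $n+1$ variables. First I would set up a flat one-parameter family $(f_\sigma)_{\sigma \in \Delta}$ in $S^d$ connecting a generic smooth polynomial $f = f_{\sigma_0}$ (with $\sigma_0 \neq 0$) to $f_0 = \sum_{i=1}^m f_i g_i$, a generic singular polynomial of the kind handled by Proposition \ref{leschiffer}. By Corollary \ref{coroestimee}, for $d$ sufficiently large the graded pieces $R^{*d}_{f_\sigma}$ with $* \leq 3$ have constant dimension, so the multiplication maps relevant to conditions (*) and (**) organize into a flat family of partial rings over $\Delta$, yielding a projective bundle $\mathbb{P}(R^d_{f_\Delta})$ and relative Grassmannian bundles parameterizing the candidate spaces $I_k^{*d}$.

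Next, I would consider the relative incidence variety
$$\mathcal{W}_\Delta = \overline{\{(\sigma, [\phi]) \in \Delta \times \mathbb{P}(R^d_{f_\Delta}) : \phi \text{ satisfies (**) at } f_\sigma\}},$$
taken as the Zariski closure of a constructible subset (condition (**) is defined by existence of auxiliary data in the Grassmannian bundle together with a compatible first-order deformation of $f$ and of the $I_k^{*d}$, so Chevalley applies). Since Schiffer variations satisfy (**) formally (Section \ref{secschifor}), the Schiffer locus $\mathcal{S}_\Delta = \{(\sigma, [x^d \bmod J_{f_\sigma}^d]) : x \in S^1\}$ — a $\mathbb{P}^n$-bundle of relative dimension $n$ over $\Delta$ — is contained in $\mathcal{W}_\Delta$. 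Proposition \ref{leschiffer} gives $\mathcal{W}_0 = \mathcal{S}_0$ set-theoretically, and upper-semicontinuity of fiber dimension then forces $\dim \mathcal{W}_\sigma \leq n$ for generic $\sigma$; since $\mathcal{S}_\sigma \subseteq \mathcal{W}_\sigma$ is already of dimension $n$, $\mathcal{S}_\sigma$ is a top-dimensional irreducible component of $\mathcal{W}_\sigma$.

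The main obstacle, and the hardest part of the argument, is to conclude the set-theoretic equality $\mathcal{W}_\sigma = \mathcal{S}_\sigma$ for generic $\sigma$, and not merely the inclusion with matching top dimension. A priori $\mathcal{W}_\Delta$ could possess extra irreducible components $\mathcal{D}$ dominating $\Delta$ whose special fiber collapses into $\mathcal{S}_0$ (since $\mathcal{D}_0 \subseteq \mathcal{W}_0 = \mathcal{S}_0$ is automatic) without coinciding with $\mathcal{S}_\Delta$. To rule out such $\mathcal{D}$, I would combine two ingredients. First, the uniqueness assertion of Remark \ref{remaquonvautiliser}: at $\sigma = 0$ the auxiliary datum $I_{d-1}^d = x_0^{d-1} S^1 \bmod J_{f_0}^d$ is canonically determined by $\phi_0 = x_0^d$, so the data lifting $(\phi_0, \{I_k^{*d}\})$ is rigid at the special fiber. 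Second, a deformation-theoretic analysis of the Zariski tangent spaces of $\mathcal{W}_\Delta$ and $\mathcal{S}_\Delta$ along $\mathcal{S}_\Delta$, using the multiplication maps in the relative partial ring together with condition (**)(iii) (rigidity of the quotient partial ring $R^{*d}/I_{d-1}^{*d}$ under the deformation). Concretely, a first-order tangent vector to $\mathcal{W}_\Delta$ at a generic point of $\mathcal{S}_0$ should be computable from the infinitesimal constraints imposed by (*) and (**)(iii) on the partial ring, and a direct verification via the multiplication structure analogous to those performed in Section \ref{secschideg} (e.g., Lemmas \ref{claimpourplustardaussi} and \ref{lepourmorRmod}-style arguments) should show that this tangent space agrees with that of $\mathcal{S}_\Delta$.

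Together these two inputs imply $\mathcal{W}_\Delta$ is reduced along $\mathcal{S}_\Delta$, forcing (e.g., via comparison of Hilbert polynomials in the flat projective bundle $\mathbb{P}(R^d_{f_\Delta})$) the scheme-theoretic equality $\mathcal{W}_\Delta = \mathcal{S}_\Delta$, and hence $\mathcal{W}_\sigma = \mathcal{S}_\sigma$ for all $\sigma \in \Delta$. Applied at $\sigma = \sigma_0$, this yields that any $\phi$ satisfying (**) at the generic smooth $f$ lies in $\mathcal{S}_f$, i.e., is a first-order Schiffer variation, which is the content of Proposition \ref{proschigen}.
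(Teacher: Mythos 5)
Your overall route is the same as the paper's: specialize to the singular $f_0=\sum_i f_ig_i$, invoke Proposition \ref{leschiffer} there, and upgrade the set-theoretic statement at the special fibre to a scheme-theoretic one so that a specialization/cycle argument transfers the conclusion to the generic $f$. You also correctly locate the crux: the set-theoretic equality $\mathcal{W}_0=\mathcal{S}_0$ alone does not prevent extra components of the relative solution variety from collapsing into the Schiffer locus at $\sigma=0$, and the missing input is that the solution scheme is \emph{reduced} along $\mathcal{S}_0$ at its generic point.

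The gap is that this reducedness is exactly where the work lies, and you only assert that "a direct verification via the multiplication structure \ldots should show" the tangent spaces agree. In the paper this is Lemma \ref{lecomplement}, and it is not a routine variant of Lemmas \ref{claimpourplustardaussi} or \ref{lepourmorRmod}: one must first prove that the auxiliary data are forced to be $I_k^d=x^kR_f^{d-k}$ for \emph{all} $1\le k\le d-1$ (this is where part (\ref{item4}) of condition (*), unused anywhere else, enters, together with injectivity of multiplication by powers of $x$), and then run a first-order computation of the equations $aw=x^d\gamma$ restricted to $Z$, using $\mathrm{Pic}\,Z=\mathbb{Z}\mathcal{O}_Z(1)$ and the smallness of $I_Z(d)$, to show that an infinitesimal solution $(h,h_1,\dots,h_{d-1})$ is necessarily tangent to the Veronese, i.e.\ $h_1(xa')=ya'$ mod $xR_f^{d-1}$ for a single $y\in S^1$. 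Without that computation the reduction from the special to the generic fibre does not go through. A second, smaller caveat: generic reducedness of $\mathcal{W}_0$ along $\mathcal{S}_0$ (equivalently your Hilbert-polynomial comparison) only excludes extra components of $\mathcal{W}_\Delta$ whose special fibres contain the generic point of $\mathcal{S}_0$; a component with $\dim\mathcal{D}_\sigma<n$ could a priori specialize into a proper subvariety of $\mathcal{S}_0$, where the scheme-theoretic control established at the generic point says nothing, so this case deserves an explicit word (the paper is itself terse here, phrasing the conclusion in terms of maximal scheme-theoretic solutions).
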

\begin{proof} Proposition \ref{leschiffer} proves Proposition \ref{proschigen} when $f=\sum_{j=1}^mf_jg_j$ is the singular polynomial
used in previous section. It thus remains to see that this implies the same result for
the generic $f$. This almost follows because the condition (**) is closed on $(f,\,\phi)$ once the dimensions of the spaces
$R^d_f,\,R^{2d}_f,\,R^{3d}_f$ remain respectively equal to the dimensions of the spaces
$R^d_{f_{\rm gen}},\,R^{2d}_{f_{\rm gen}},\,R^{3d}_{f_{\rm gen}}$ for the generic $f_{\rm gen}$, which is guaranteed for $d$ large enough by Lemma \ref{leconiveausing}.
This is not completely true because we did not prove the statement of Proposition \ref{leschiffer} schematically for the special $f$.
In fact, what we have to do in order to conclude is to prove the following complement to Proposition
\ref{leschiffer}.
\begin{lemm} \label{lecomplement}
 Let the notation and assumption on $f,\,\phi$  be as in Proposition \ref{leschiffer}. Assume moreover that
 $\phi=x^d$ in $R^d_f$, with $x$  generic in $S^1$. Then $I_k^d=x^{k}R^{d-k}_f$ for $1\leq k\leq d-1$.

 Furthermore, inside
 $\mathbb{P}(R^d_f)\times \prod_{k=0}^{d-1}{\rm Grass}(r_k,R^d_f)$, where $r_k:={\rm dim}\,R^{d-k}_f$, the set of points $(\phi=x^d,I_{k}^d=x^kR^{d-k}_f)$,
 is schematically defined, at least at its generic point,  by the condition (*).
\end{lemm}
\begin{rema} {\rm The spaces $I_k^{2d},\,I_k^{3d}$ are defined by the spaces $I_k^d$ using equation (\ref{item3}), so we can consider condition (*) as  a condition on $(\phi,\,I_{k}^d)$ only.}
\end{rema}
\begin{rema}{\rm We did not use up to now equation (\ref{item4}) of Condition (*). We will need it for the proof of this lemma.}
\end{rema}
\begin{proof}[Proof of Lemma \ref{lecomplement}] We already noted in Remark \ref{remaquonvautiliser} that $I_{d-1}^d=x^{d-1}R^{d-1}_f$. We
 study again  the equations
\begin{eqnarray}\label{eqpouraw} aw=x^d\gamma\,\,{\rm in}\,\,R^{2d}_f,
\end{eqnarray}
for $a$ in a subspace $I_1^d\subset R^d_f$    of dimension ${\rm dim}\,R^{d-1}_f$, and $w$ in  $I_{d-1}^d=x^{d-1} S^1$.
We already proved in Lemma \ref{lepourproschIun} that
$\overline{I}_1^d=I^d_{1\mid Z}\subset xH^0(Z,\mathcal{O}_Z(d-1))$.
 We thus conclude that elements  $a\in I_1^d$  can be written as
 $$a=xa'+k_a\,\,{\rm mod}\,\,J_f^d,$$
 with $a'\in S^{d-1},\,k_a\in I_Z(d)$.
 Restricting (\ref{eqpouraw}) to $Z$, we also get
 $$\gamma=a'w'+\gamma'\,\,{\rm mod}\,\,J_f^d,$$
 for some $\gamma'\in I_Z(d)$. The equation (\ref{eqpouraw}) then becomes
 \begin{eqnarray}\label{eqpourawprime} x^{d-1}w'k_a=x^d\gamma'\,\,{\rm in}\,\,R^{2d}_f,
\end{eqnarray}
where $w'$ is generic in $S^1$.
  One then easily concludes that $k_a=0$ mod $\langle xS^{d-1},J_f\rangle$, that is, $I_1^d\subset xR^{d-1}_f$. Hence we proved
  (by dimension reasons) that  \begin{eqnarray}\label{eqIunequalIun} I_1^d=xR^{d-1}_f.
   \end{eqnarray} We now use (\ref{item4}).
  We get  $$ I_1^d\cdot I_1^d\subset I_2^{2d},$$
 which provides, using (\ref{eqIunequalIun})
 $ x^2R^{2d-2}_f\subset I_2^{2d}$.
     Using (\ref{seccond*}), this inclusion   gives  in turn,  by dimension reasons,
    \begin{eqnarray}\label{eqIunIun}x^2R^{2d-2}_f= I_2^{2d}.
    \end{eqnarray}
    Here, in order to apply the dimension argument,  we need to know  that multiplication by $x^2$
    is injective on $R^{2d-2}_f$. More generally we will need to know that  multiplication by
$x^{i}$ is injective  on $R^{d+i}_f$ for $1\leq i\leq d$, which is not hard to prove since $x$ is generic.
    We next use (\ref{item3})
     $$R^d_fI_2^{d}\subset I_2^{2d},$$
   that is, $$I_2^d\subset [I_2^{2d}:R^d_f]=[x^2R^{2d-2}_f:R^d_f],$$ and easily conclude that $I_2^d\subset x^2R^{d-2}_f$, hence
   $I_2^d=  x^2R^{d-2}_f$ by dimension reasons. We continue this way and prove that
   $I_{k}^d=x^kR^{d-k}_f$ for all $1\leq k\leq d-1$.
   Thus the first statement is proved.

In order to prove the schematic statement, we consider a first order variation $(h,h_1,\ldots,h_{d-1})$ of $(x^d,I^d_1,\ldots,I^d_{d-1})$ satisfying conditions
(*) at first order. We thus have a  first order deformation
$x^d+\epsilon h\in R^d_f$ of $x^d$ and
$$h_1\in {\rm Hom}\,(I^d_1,R^d_f/I^d_1),\,\,\ldots,\,h_{d-1}\in {\rm Hom}\,(I^d_{d-1},R^d_f/I^d_{d-1}),$$
satisfying the infinitesimal version of the equations (\ref{item2})-(\ref{item4}).
We have to prove that there is a $y\in S^1/x$ such that
$$h_l:I^d_1\cong R^{d-l}_f\rightarrow R^d_f/x^lR^d_f$$
is given by multiplication by $lyx^{l-1}$.
We first observe that it suffices to prove the result for $l=1$, because the reasoning above, which deduces the equality
$I_{k}^d=x^kR^{d-k}_f$ for all $1\leq k\leq d-1$ from the equality (\ref{eqIunequalIun}) using equations (\ref{item3}) and (\ref{item4}) work as well schematically.

We thus have a  first order deformation
$x^d+\epsilon h\in R^d_f$ of $x^d$ and
$$h_1\in {\rm Hom}\,(I^d_1,R^d_f/I^d_1),\,\,h_{d-1}\in {\rm Hom}\,(I^d_{d-1},R^d_f/I^d_{d-1}),$$
satisfying the equations
\begin{eqnarray}\label{eqinfiversion}(a+\epsilon h_1(a))(w+\epsilon h_{d-1}(w))=(x^d+\epsilon h)\gamma_\epsilon\,\,{\rm in}\,\,R^{2d}_f\otimes \mathbb{C}[\epsilon]/(\epsilon^2),
\end{eqnarray}
for any $a=x a'\in xR^{d-1}_f$, $w=x^{d-1}w'\in x^{d-1}S^1$, and for $\gamma_\epsilon=\gamma+\epsilon\gamma_1$, where $\gamma $
is as in (\ref{eqpouraw}). We want to prove that there exists $y\in S^1/\langle x\rangle$, such that for any $a=xa'$, the following holds in $R^d_f$
\begin{eqnarray}\label{eqpourh1finfin}  h_1(a)=ya'\,\,{\rm mod} \,\,xR^{d-1}_f.
\end{eqnarray}

Looking at the previous proof, we deduce from (\ref{eqpourawprime}) with $k_a=0$ that $\gamma'=0$ (using injectivity of the multiplication by $x^d$), so  $\gamma=a'w'$ in $R^d_f$. We thus have $\gamma_\epsilon=a'w'+\epsilon \gamma_1$.
Equation (\ref{eqinfiversion}) then gives
\begin{eqnarray}\label{eqinfiversiondev} h_1(a)x^{d-1}w'+ h_{d-1}(w)xa'=x^d\gamma_1+ ha'w' \,\,{\rm in}\,\,R^{2d}_f,
\end{eqnarray}
for any $a=x a'\in xR^{d-1}_f$, $w=x^{d-1}w'\in x^{d-1}S^1$.
We claim   that
\begin{eqnarray}\label{eqinfinfinfin}h_1(a)_{\mid Z}\in \langle a'\rangle\,\,{\rm mod}\,\,\langle x\rangle.
\end{eqnarray}
Indeed,  (\ref{eqinfiversiondev})  first implies that $h=xh'$ since it becomes divisible by $x$ after multiplication by any element of
$R^d_f$, and then, after simplification by $x$, that
\begin{eqnarray}\label{eqinfiversiondevrestZ} h_1(a)x^{d-2}w'+ h_{d-1}(w)a'=x^{d-1}\gamma_1+ h'a'w' \,\,{\rm in}\,\,R^{2d-1}_f.
\end{eqnarray}
We rewrite (\ref{eqinfiversiondevrestZ}) in the form
\begin{eqnarray}\label{eqinfiversiondevrestZtripot} x^{d-2}(h_1(a)w'-x\gamma_1)+a'(h_{d-1}(w)-h' w')=0.
\end{eqnarray}
We now restrict (\ref{eqinfiversiondevrestZtripot}) to $Z$. As $x^{d-2}$ and $a'$ have no common divisor on $Z$ for $a'$ generic, it follows that
$$(h_1(a)w'-x\gamma_1)_{\mid Z}\in\langle a'\rangle,$$
which proves (\ref{eqinfinfinfin}) since $w'\in S^1$ is generic.

We can even conclude by similar arguments that
$$ h_1(a)_{\mid Z}= m_{1} a' \,\,\,{\rm mod}\,\,\langle x\rangle,$$
for some $m_1\in H^0(Z,\mathcal{O}_Z(1))$.
We can see $m_1$ as an  element  $y\in S^1$ because  the map of restriction  to $Z$ is an isomorphism in degree $1$, and we can thus write in $R_f^d$
\begin{eqnarray}\label{eqfinfuin} h_1(a)=y a'+k_1(a')\,\,\,{\rm in}\,\,R^d_f/x R^{d-1}_f,
\end{eqnarray}
where $k_1(a')\in I_Z(d)$ for any $a'\in R^{d-1}_f$.
Equation (\ref{eqinfiversiondevrestZtripot}) then gives $x^{d-2}((y a'+k_1(a'))w'-x\gamma_1)+a'(h_{d-1}(w)-h' w')=0$ in $R^{2d-1}_f$, that is
\begin{eqnarray}\label{eqregroupfin} x^{d-2}(k_1(a')w'-x\gamma_1)+a'(y x^{d-2}h_{d-1}(w)-h' w')=0\,\,{\rm in}\,\,R^{2d-1}_f.
\end{eqnarray}
The term $x^{d-2}(k_1(a')w'-x\gamma_1)$ belongs by (\ref{eqregroupfin}) to $x^{d-2}I_Z(d+1)\cap \langle a'\rangle$. For $a'$ generic, it is easy to  show that it implies that it  belongs to $a'x^{d-2}I_Z(2)=0$. Thus $x^{d-2}(k_1(a')w'-x\gamma_1)=0\,\,{\rm in}\,\,R^{2d-1}_f$, hence $k_1(a')w'-x\gamma_1=0$, and $k_1(a')=0$ mod $\langle x\rangle$. This is true for $a'$ generic in $R^{d-1}_f$, hence for all $a'$. Thus (\ref{eqpourh1finfin}) is proved.
\end{proof}

Lemma \ref{leschiffer} is a schematic version of Proposition \ref{leschiffer} that guarantees that the Veronese
image $v_f(\mathbb{P}(S^1))\subset \mathbb{P}(R^d_f)$ is characterized not only set theoretically but also schematically (at the generic point) by condition (**) (in fact, we can see from the proof above that  condition (*) even suffices for the scheme-theoretic statement, but condition (**) was needed to prove the set-theoretic statement for the special $f$). It  follows that
for generic $f_{\rm gen}$,  the Veronese
image $v_f(\mathbb{P}(S^1))\subset \mathbb{P}(R^d_f)$  is also  characterized by condition (**).
\end{proof}
\begin{proof}[Proof of Theorem \ref{theodonagivoisin} (1)]
Fix integers $d,\,n$ with $d$ dividing $n+1$, and for which the conclusion  of Proposition
\ref{proschigen} holds. We want to show that if $X_f$ is  a   very general hypersurface of degree $d$ in $\mathbb{P}^n$, then any smooth hypersurface $X_g$ of degree $d$ in $\mathbb{P}^n$
such
that there exists an isomorphism
$$ H^{n-1}(X_g,\mathbb{Q})_{\rm prim}\cong H^{n-1}(X_f,\mathbb{Q})_{\rm prim}$$
of rational Hodge structures, is isomorphic to $X_f$.

We first argue as in Section \ref{secnounou}. Denote by $U_{d,n}^0\subset U_{d,n}$ the Zariski open set parametrizing automorphisms free smooth hypersurfaces. As $f$ is very general,  $f\in U_{d,n}^0$ and our assumption provides
simply connected Euclidean open neighborhoods
$U\subset U_{d,n}^0,\,V\subset U_{d,n}^0$ of $f$, $g$ respectively, a holomorphic
 diffeomorphism $i:U\cong V$ with $i(f)=g$, and an isomorphism of complex variations of Hodge structures
 $$ (H^{n-1}_\mathbb{C}, F^\cdot\mathcal{H}^{n-1})\cong i^{-1}(H^{n-1}_\mathbb{C}, F^\cdot\mathcal{H}^{n-1})$$
 on $U$. Here, if $\pi:\mathcal{X}_{d,n}\rightarrow U_{d,n}^0$ is the universal hypersurface,
 $H^{n-1}_\mathbb{C}$ is the local system $R^{n-1}\pi_*\mathbb{C}_{\rm prim}$ on $U_{d,n}^0$,
 and  $F^\cdot\mathcal{H}^{n-1}$ is the Hodge filtration on the associated flat holomorphic vector bundle
 $\mathcal{H}^{n-1}=H^{n-1}_\mathbb{C}\otimes \mathcal{O}_{U_{d,n}^0}$.

 The differential $i_*:T_{U,f}\rightarrow T_{V,g}$ is a linear isomorphism
 $$i_*:R^d_f\cong R^d_g.$$

 \begin{claim}\label{propvero} In the situation described above, the differential $i_*$ sends the set of first order Schiffer  variations of $f$ to the set of first order Schiffer variations of $g$.
 \end{claim}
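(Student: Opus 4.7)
The plan is to invoke Proposition \ref{proschigen}, which for very general $f$ with $d\mid n+1$ (and $d$ sufficiently large) characterizes first order Schiffer variations of $X_f$ as precisely the elements $\phi\in R^d_f$ satisfying condition (**). To conclude Claim \ref{propvero} I would show that $i_*$ carries the witnesses of (**) at $f$ to witnesses of (**) at $g$.

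The first step is to upgrade $i$ from a diffeomorphism of moduli neighborhoods to a holomorphic family of Jacobian-ring isomorphisms. Since $i$ comes from an isomorphism of variations of Hodge structure, Theorem \ref{theCagri} provides, for each $u\in U$, an isomorphism of the partial-ring data in the relevant degrees
\[
\Psi_u:\bigl(R^{d}_{f_u},R^{2d}_{f_u},R^{3d}_{f_u},\mu\bigr)\xrightarrow{\cong}\bigl(R^{d}_{g_{i(u)}},R^{2d}_{g_{i(u)}},R^{3d}_{g_{i(u)}},\mu\bigr)
\]
varying holomorphically with $u$, which at $u=[f]$ restricts in degree $d$ to $i_*$ (up to a harmless scalar coming from the coefficient in the Carlson--Griffiths diagram and a choice of trivialization of $H^{n-1,0}$ along $U$).

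Given $\phi\in R^d_f$ satisfying (**), with witnesses $I_k^{*d}\subset R^{*d}_f$, a one-parameter family $f_t$ with $\dot f_0=\phi$, and witnesses $I_{k,t}^{*d}\subset R^{*d}_{f_t}$, I would set $\phi':=i_*\phi$, $I'^{\,*d}_k:=\Psi_{[f]}(I_k^{*d})$, let $g_t$ be a lift to the polynomial parameter space of the moduli curve $i\circ[f_t]$ (so that $\dot g_0 = \phi'$), and take $I'^{\,*d}_{k,t}:=\Psi_{f_t}(I_{k,t}^{*d})$. Each of the defining conditions (\ref{seccond*})--(\ref{item4}) of (*) is intrinsic to the partial ring structure, hence preserved by the ring isomorphism $\Psi_{f_t}$; this gives (**)(i) and (**)(ii) for $\phi'$. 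For (**)(iii), the isomorphism class as a partial ring of the quotient $(R^d_{f_t}/I^d_{d-1,t},R^{2d}_{f_t}/I^{2d}_{d-1,t},\overline{\mu})$ is preserved by $\Psi_{f_t}$, so its constancy in $t$ on the $f$-side passes to the $g$-side. Shrinking $U$ if necessary, $g$ is itself very general, so Proposition \ref{proschigen} applies and forces $\phi'$ to be a first order Schiffer variation of $g$.

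The only nontrivial point is the construction of the holomorphic family $\Psi_u$, which I expect to be the main obstacle to writing out a clean proof; once it is available, the transport of (**) is essentially formal. This family is however produced by standard arguments: an isomorphism of polarized VHS on the simply connected neighborhoods $U\cong V$ induces an isomorphism of their IVHS at every point, and Theorem \ref{theCagri} identifies these IVHS's with the Jacobian-ring multiplication in degrees divisible by $d$, so the identifications automatically assemble into a holomorphic family of partial-ring isomorphisms.
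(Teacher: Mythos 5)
Your proposal is correct and follows essentially the same route as the paper: the paper's proof likewise transports the one-parameter Schiffer variation $(f_t)$ through the isomorphism of variations of Hodge structure to a family $(g_t)$ satisfying the hypotheses (**) of Proposition \ref{proschigen}, and then invokes that proposition to conclude that $i_*(\phi)=\dot g_0$ is a first order Schiffer variation of $g$. The paper simply leaves implicit the point you spell out, namely that Theorem \ref{theCagri} turns the VHS isomorphism into a family of partial-ring isomorphisms preserving condition (**), and it obtains the genericity of $g$ from the very-generality argument in the introduction rather than by shrinking $U$.
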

\begin{proof}  Indeed, the local diffeomeorphism $i$ induces an isomorphism of variations of Hodge structures. It thus sends a $1$-parameter Schiffer variation
$(f_t)_{t\in\Delta}$ of $f$ to a $1$-parameter  variation
$(g_t)_{t\in\Delta}$, $g_t:=i(f_t)$,  of $g$, which satisfies the  assumptions of Proposition \ref{proschigen}.
Proposition \ref{proschigen} then tells us that  $\psi:=\frac{\partial g_t}{\partial t}_{\mid t=0}$ is a first order Schiffer variation of $g$. But  $\phi:=\frac{\partial f_t}{\partial t}_{\mid t=0}$  is an arbitrary first order  Schiffer variation of $f$ and  we have $\psi=i_*(\phi)$.
\end{proof}
Having the claim, the proof of the  theorem is finished  using Proposition \ref{proschifferrec}.
\end{proof}

CNRS, IMJ-PRG, 4 Place Jussieu 75005 Paris, France

claire.voisin@imj-prg.fr
    \end{document}